\newtheorem{theo}{Theorem}[section]
\newtheorem{lemm}[theo]{Lemma}
\newtheorem{rema}[theo]{Remark}
\newtheorem{assu}{Hypothesis}
\numberwithin{equation}{section}
\begin{document}

\title[Periodic Solutions to nonlinear Euler-Bernoulli beam equations]{
Periodic Solutions to nonlinear Euler-Bernoulli beam equations}

\author{Bochao Chen}
\address{School of
Mathematics and Statistics, Center for Mathematics and
Interdisciplinary Sciences, Northeast Normal University, Changchun, Jilin 130024, P.R.China}
\email{chenbc758@nenu.edu.cn}

\author{Yixian Gao}
\address{School of
Mathematics and Statistics, Center for Mathematics and
Interdisciplinary Sciences, Northeast Normal University, Changchun, Jilin 130024, P.R.China}
\email{gaoyx643@nenu.edu.cn}

\author{Yong Li}
\address{School of
Mathematics and Statistics, Center for Mathematics and
Interdisciplinary Sciences, Northeast Normal University, Changchun, Jilin 130024, P.R.China. }
\email{yongli@nenu.edu.cn}
%
%
\thanks{The research of YG was supported in part by  FRFCU2412017FZ005
and JLSTDP 20160520094JH.
 The research  of YL was supported in part by NSFC grant 11571065  and  National Research Program of China Grant 2013CB834100}
 
 \subjclass[2000]{35B10, 58C15, 58J45, 35L72}

\keywords{ Euler-Bernoulli beam equations;  Variable coefficients;   Periodic solutions;  Nash-Moser iteration.}

\begin{abstract}
Bending vibrations of thin beams and plates may be described by nonlinear Euler-Bernoulli beam equations with  $x$-dependent  coefficients.
In this paper we investigate existence of families of time-periodic solutions to such a model using  Lyapunov-Schmidt reduction and
 a differentiable  Nash-Moser iteration scheme. The results hold for all  parameters $(\epsilon,\omega)$ in a Cantor set with asymptotically full measure as $\epsilon\rightarrow0$.
\end{abstract}

\maketitle

\section{Introduction}

Consider  one dimensional (1D)
 nonlinear Euler-Bernoulli beam equations
\begin{align}\label{A1}
\rho(x)u_{tt}+(p(x)u_{xx})_{xx}=\epsilon f(\omega t,x,u),\quad x\in[0,\pi]
\end{align}
with respect to the pinned-pinned boundary conditions:
\begin{align}\label{A2}
u(t,0)=u(t,\pi)=u_{xx}(t,0)=u_{xx}(t,\pi)=0,
\end{align}
where $\rho, p$ are positive coefficients, the parameter $\epsilon$ is small, and the nonlinear forcing term $f(\omega t,x, u)$ is $\frac{2\pi}{\omega}$-periodic in time, i.e., $f(\cdot,x, u)$ is $2\pi$-periodic. Obviously, $u=0$ is not the solution of equation \eqref{A1} if $f(\omega t,x,0)\neq0$.

Bending vibrations of thin beams and plates may be described by equation \eqref{A1}, which reflects the relationship between  the applied load and the beam's deflection, see \cite{weaver1990vibration}. The curve $u(\cdot,x)$ describes the deflection of the beam at some position $x$ in the vertical direction, $p$  is the flexural rigidity and $\rho$ is the density of the beam. Derivatives of the deflection $u$ have  physical significance: $u_x$ is the slope of the beam; $-p u_{xx}$ is the bending moment of the beam and $-(pu_{xx})_{x}$ is the shear force of the beam. Moreover $f$ is distributed load,  which may be a function of $x$, $u$ or other variables.

The free vibration of uniform and non-uniform beams attracted many investigators since Bernoulli and
Euler derived the governing differential equation in the 18th century. The beams with end springs have been
dealt with by many investigators. Many researchers focused on the study of the spectral problems for the following Euler-Bernoulli operators
\begin{align}\label{A5}
\mathcal{E}u:=\frac{1}{\rho}(pu'')''+Vu,
\end{align}
see \cite{MR1334400,Badanin2017resonances,MR1660270,MR2594376,MR1990171,MR3346145} and references therein. Elishakoff et al. considered apparently the first time harmonic form solution (i.e. $u(t,x)=u(x)\sin \omega t$) for linear  equation of \eqref{A1} under different boundary conditions, see \cite{MR2267999,MR2160080}.  Under linear boundary feedback control, in \cite{MR1897202}, Guo was concerned with  the Riesz basis property and the stability of such one with boundary conditions
\begin{align*}
\begin{cases}
y(t, 0)=y_x(t, 0)=y_{xx}(t, \pi)=0,\\
(p(x)y_{xx})_{x}(t, \pi ) = ky_t( t, \pi),
\end{cases}
\end{align*}
where $k \geq0$ is a constant feedback.  Despite many studies on the linear model above, the nonlinear problems are less studied due to
 the challenge of the invertibility of linearized Euler-Bernoulli operators with
   variable coefficients $\rho,p$. This paper presents the first mathematical  analysis for
the existence of periodic solutions  to nonlinear  equation \eqref{A1}. There are two main challenges in this work: (i) The finite differentiable  regularities of the nonlinearity. Clearly, a difficulty when working with functions
having only Sobolev regularity is that the Green functions will exhibit only a polynomial decay
off the diagonal, and not exponential (or subexponential). A key concept that one must exploit
is the interpolation/tame estimates.
(ii) The  ``small divisors problem'' caused by resonances. We  give  the  asymptotic formulae of the eigenvalues to  the Euler-Bernoulli beam's problem \eqref{D4}. The asymptotic property of the eigenvalues for fourth-order  operators on the unit interval are less investigated than for second-order ones, see also \cite{najmark1968linear,Caudill1998isospectral}.

 Letting  $t\rightarrow{t}/{\omega}$, equation \eqref{A1} is equivalent to
\begin{align}\label{B1}
\omega^2\rho(x)u_{tt}+(p(x)u_{xx})_{xx}=\epsilon f(t,x,u)
\end{align}
Hence we look for $2\pi$-periodic solutions in time to \eqref{B1}. The existence problem of periodic or quasi-periodic solutions for PDEs has received considerable attention in the last twenty years. The main difficulty in finding periodic solutions of \eqref{B1} is the so-called ``small divisors problem'' caused by resonances. In fact, the spectrum of
$${\mathcal{M}}u:=\omega^2u_{tt}+\frac{1}{\rho}(pu_{xx})_{xx}$$
presents the  following form
\begin{equation*}
-\omega^2 l^2+\lambda_j=-\omega^2{l^2}+j^4+aj^2+b+O({1}/{j}), \quad l\in\mathbb{Z},~j\rightarrow +\infty.
\end{equation*}
Consequently,   above spectrum approaches to zero for almost every $\omega$ under the assumption  $b\neq0$.
 This causes that the operator ${\mathcal{M}}$ cannot map, in general, a  functional space into itself, but only into  a large  functional space with less regularity. There are two main approaches to deal with ``small divisors problem''. One is the infinite-dimensional KAM (Kolmogorov-Arnold-Moser) theory to  Hamiltonian PDEs, refer to Kuksin \cite{MR911772}, Wayne \cite{MR1040892}, and recent results \cite{MR3668616,MR3187681,MR2507958}. The other more direct bifurcation approach was
 established by  Craig
 and Wayne \cite{MR1239318} and improved by Bourgain \cite{MR1316975,MR1345016} based on
 Lyapunov-Schmidt reduction and a Nash-Moser iteration
  procedure, and recent results \cite{MR3312439,MR2831876,MR2580515}.

Up to now,  there has been  a number of work devoted to the existence of periodic solutions  and  quasi-periodic solutions for  classical beam  equation, i.e. equation \eqref{A1} with $\rho(x)=p(x)=1$.
In \cite{Mckenna1999,Mckenna1987}, Mckenna et al. investigated the nonlinear beam equation as  a  model for  a   suspension bridge and established  multiple periodic solutions when a parameter exceeds a certain eigenvalue.
By means of the infinite KAM theorem,  the existence and stability  of  small-amplitude quasi-periodic solutions of one dimensional beam equations with boundary conditions \eqref{A2} was obtained in
\cite{chang2015quasi,geng2003kam,wang2012quasi}. For high dimensional cases,
in \cite{eliasson2016kam}, Eliasson, Gr\'{e}bert and Kuksin proved that there exist many  linearly stable or unstable (for $d \geq 2$) small-amplitude quasi-periodic solutions for nonlinear beam equations
\begin{equation*}
u_{tt}+\Delta^{2}u+mu+\partial_{u}f(x,u)=0, \quad x\in\mathbb{T}^{d},
\end{equation*}
where $f(x,u)=u^4+O(u^5)$. Above proofs are carried out in analytic cases.
 Recently, based on a differentiable  Nash-Moser type implicit function theorem, in \cite{chen2018quasi}, Chen et al. gave
 the  existence
of quasi-periodic in time solutions of nonlinear beam equations
\begin{align*}
u_{tt}+\Delta^{2}u+V(x)u=\epsilon f(\omega t, x, u),\quad  x\in\boldsymbol M,
\end{align*}
where $\boldsymbol M$ is any compact Lie group or  homogenous manifold with respect to a compact Lie group.
The nonlinear  PDEs with $x$-dependent coefficients   have recently attracted considerable
attention due to their widely application. In \cite{Barbu1996,Barbu1997}, Barbu and Pavel considered the wave equations
with $x$-dependent coefficients for the first time.
Under the general boundary conditions, periodic or anti-periodic boundary conditions and Dirichle boundary conditions, Ji and Li showed the existence of  periodic solutions which periodic $T$ is required to be a rational multiple $\pi$, see \cite{ji2007periodic,ji2018periodic,ji2011periodic}.
The existence of  periodic  solutions with  $T$ being  a irrational multiple $\pi$   for the  forced vibrations of a nonhomogeneous string  was obtained in \cite{baldi2008forced} and \cite{Chen2018} by a Nash-Moser theorem.

\subsection{Main results}

We now state the main results of this paper. To do so, we need to make our notations and assumptions more precise.

Let $\rho,p$ satisfy
 \begin{align}\label{A3}
 \rho(x)=e^{4\int^{x}_{0}\alpha(\mathfrak{x}) \mathrm{d}\mathfrak{x}}>0,\quad p(x)=p(0)e^{4\int^{x}_0\beta(\mathfrak{x})\mathrm{d}\mathfrak{x}}>0
 \end{align}
 with $\alpha(0)+\beta(0)=\alpha(\pi)+\beta(\pi)=0.$
Without loss of generality we assume  the following normalization:
\begin{align*}
\int^{\pi}_{0}\left({\rho/p}\right)^{\frac{1}{4}}~\mathrm{d}x=\pi.
\end{align*}

In fact, assumption \eqref{A3} on $\rho,p$ is essential for  giving the asymptotic forms of the spectrum  of  Euler-Bernoulli beam operator.
 Make the  Liouville substitution
\begin{align*}
x=\psi(\xi)\Leftrightarrow \xi=\phi(x)\quad\text{with}~ \phi(x):=\int^{x}_{0}\zeta(s)\mathrm{d}s,~ \zeta(s)=\left({\rho}/{p}\right)^{1/4},
\end{align*}
together with the unitary Barcilon-Gottlieb transformation $\mathrm{U}:L^2((0,\pi),\rho(x)\mathrm{d}x)\rightarrow L^2((0,\pi),\mathrm{d}\xi)$ by
\begin{align*}
u(x)\mapsto y(\xi)=(\mathrm{U}u)(\psi(\xi))=q(\psi(\xi))u(\psi(\xi)),\quad \xi\in[0,\pi],
\end{align*}
where $q=p^{1/8}\rho^{3/8}>0$.
Under the conditions that  $\rho,p$ satisfy special condition \eqref{A3}, in  Lemma 5.1 of
 \cite{MR3346145}, Badanin and ~Korotyaev  showed that
the operators $\mathcal{E}$ defined on \eqref{A5} satisfying boundary conditions \eqref{A2} and $\mathcal{H}$ satisfying boundary conditions \eqref{A2} are unitarily equivalent and one has $\mathcal{E}=\mathrm{U}^{-1}\mathcal{H}\mathrm{U}$, where
\begin{align*}
\mathcal{H}u:=u_{xxxx}+2(p_1u_x)_x+p_2u,
\end{align*}
and $p_i,i=1,2$ are seen in (5.4)--(5.5) of \cite{MR3346145}. Then, They  applied  the asymptotic formulae of the eigenvalues of the operators $\mathcal{H}$ to study the ones of the operators $\mathcal{E}$, see \cite{MR3346145}.

For all $s\geq0$, define the following Sobolev spaces $\mathcal{H}^{s}$ of real-valued functions by
\begin{align*}
\mathcal{H}^{s}:=\left\{u:\mathbb{T}\rightarrow H^2_p((0, \pi);\mathbb{R}),u(t,x)=\sum\limits_{l\in\mathbb{Z}}u_{l}(x)e^{{\rm i}lt},u_l\in{H}^2_p((0, \pi);\mathbb{C}),~ u_{-l}=u^*_{l},~\|u\|_{s}<+\infty \right\},
\end{align*}
where $u^*_{l}$ is the complex conjugate of $u_l$, $\|u\|^{2}_{s}:=\sum\limits_{l\in\mathbb{Z}}\|u_l\|^2_{H^2}(1+ l^{2s})$ and
\begin{align*}
{H}^2_p((0,\pi);\mathbb{C}):=&\bigg\{u\in H^2((0,\pi);\mathbb{C}):~u(t,0)=u(t,\pi)=u_{xx}(t,0)=u_{xx}(t,\pi)=0\bigg\}.
\end{align*}
For all $s>\frac{1}{2}$, one has the classical Sobolev embedding $\mathcal{H}^s\hookrightarrow L^{\infty}(\mathbb{T};{H}^2_{p}(0,\pi))$ satisfying
\begin{align}\label{B2}
\quad\|u\|_{L^{\infty}(\mathbb{T};{H}^2_p(0,\pi))}\leq C(s)\|u\|_{s},\quad\forall u\in \mathcal{H}^s,
\end{align}
and algebra property, i.e.,
\begin{align*}
\|uv\|_{{s}}\leq C(s)\|u\|_{s}\|v\|_{s}, \quad\forall u,v\in\mathcal{{H}}^s.
\end{align*}
Throughout this paper, our purpose is to look for the  solutions  in $\mathcal{H}^s$ with respect to $(t, x)\in \mathbb{T}\times[0,\pi]$ and $f\in\mathcal{C}_k$ for $k\in\mathbb{N}$ large enough, where
\begin{align*}
\mathcal{C}_k:=\left\{f\in C^1(\mathbb{T}\times [0,\pi]\times\mathbb{R};\mathbb{R}):(t,u)\mapsto f(t,\cdot,u)\text{ belongs to } C^k\left(\mathbb{T}\times\mathbb{R};H^2(0,\pi)\right)\right\}.
\end{align*}
\begin{rema}
If $f(t,x,u)=\sum_{l\in\mathbb{Z}}f_{l}(x,u)e^{{\rm i}l t}$, then $u\mapsto f_{l}(\cdot,u)\in C^{k}(\mathbb{R};H^{2}((0,\pi);\mathbb{C}))$ with $f_{-l}={f}^*_{l}$. Moreover it follows from  the  continuously embedding  of  ${H}^2(0,\pi)$ into   $C^1[0,\pi]$  that
\begin{align*}
\partial_{t}^{i}\partial_{u}^{j}f\in C^1(\mathbb{T}\times[0,\pi]\times\mathbb{R};\mathbb{R}),\quad\forall0\leq i,j\leq k, \forall f\in\mathcal{C}_k.
\end{align*}
\end{rema}

Denoting by
\begin{align*}
V:={H}^2_p(0,\pi),\quad W:=\left\{ w=\sum\limits_{l\in\mathbb{Z},l\neq0}w_{l}(x)e^{\mathrm{i}l t}\in \mathcal{H}^0\right\},
\end{align*}
we perform the Lyapunov-Schmidt reduction subject to the following decomposition
\begin{align*}
\mathcal{H}^s=(V\cap \mathcal{H}^s)\oplus (W\cap \mathcal{H}^s)=V\oplus (W\cap \mathcal{H}^s).
\end{align*}
In fact, for any  $u\in \mathcal{H}^s$, one has $u(t,x)=u_0(x)+\bar{u}(t,x)$ with $\bar{u}(t,x)=\sum_{l\neq0}u_{l}(x)e^{\mathrm{i}lt}$. Then corresponding projectors
$\Pi_{V}: \mathcal{H}^s\rightarrow V,~\Pi_{W}:\mathcal{H}^s\rightarrow W$ yields that equation \eqref{B1} is equivalent to
\begin{align}\label{C1}
\begin{cases}
(pv'')''=\epsilon\Pi_{V}F(v+w)\quad &(Q),\\
L_{\omega}w=\epsilon\Pi_{W}F(v+w)\quad &(P),
\end{cases}
\end{align}
where $u=v+w$ with $v\in V$, $w\in W$, and
\begin{align}\label{C2}
L_{\omega}w:=\omega^2\rho(x)w_{tt}+(p(x)w_{xx})_{xx},\quad F:u\rightarrow f(t,x,u).
\end{align}
Note that equations $(Q)$ and $(P)$ are called bifurcation equation and  range equation, respectively. Moreover we may write $f$ as
\begin{align*}
f(t,x,u)=f_{0}(x, u)+\bar{f}(t,x,u),
\end{align*}
where $\bar{f}(t,x,u)=\sum_{l\neq0}f_{l}(x,u)e^{{\rm i}lt}$. This  leads to
 \begin{align*}
\Pi_{V}F(v)=\Pi_{V}f(t,x,v(x))=\Pi_{V}f_{0}(x, v(x))+\Pi_{V}\bar{f}(t,x,v(x))=f_{0}(x, v(x))\quad \text{for }w=0.
\end{align*}
If $w$ tends to 0, then we simplify the $(Q)$-equation as
\begin{align*}
(pv'')''=\epsilon f_{0}(x, v),
\end{align*}
which is also called the infinite-dimensional ``zeroth-order bifurcation equation'', see also \cite{baldi2008forced}. We need make the following hypothesis.
\begin{assu}\label{hy1}
There exists a constant $\epsilon_0\in(0,1)$ small enough, such that for all $\epsilon\in[0,\epsilon_0]$, the following system
\begin{align}\label{C3}
\begin{cases}
(p(x)v''(x))''=\epsilon f_{0}(x,v(x)),\\
v(0)=v(\pi)=v''(0)=v''(\pi)=0
\end{cases}
\end{align}
admits a nondegenerate solution $\hat{{v}}\in {H}^2_p(0,\pi)$, i.e., the linearized equation
\begin{equation}\label{C4}
(ph'')''=\epsilon f'_0(\hat{{v}})h
\end{equation}
possesses only the trivial solution $h=0$ in ${H}^2_p(0,\pi)$.
\end{assu}
Let us explain the rationality of Hypothesis \ref{hy1}. The linearized equation \eqref{C4} possesses only the trivial solution $h=0$ in ${H}^2_p(0,\pi)$ for $\epsilon=0$. Hence $\hat{v}=0$ is the nondegenerate solution of \eqref{C3} with $\epsilon=0$. It follows from the implicit function theorem that there exists a constant $\epsilon_0 \in(0,1)$ small enough, such that for all $\epsilon\in[0,\epsilon_0]$, Hypothesis \ref{hy1} is satisfied. Moreover define
\begin{align*}
A_{\gamma}:=\left\{(\epsilon,\omega)\in(\epsilon_1,\epsilon_2)\times(\gamma,+\infty):\frac{\epsilon}{\omega}\leq\delta_7\gamma^5, \left|\omega l-\bar{\mu}_j\right|>\frac{\gamma}{l^{\tau}},
~\forall l=1,\cdots,N_0,~\forall j\geq1
\right\},
\end{align*}
where $\delta_7$ is given in Lemma \ref{lem18}, $N_0$ is seen in \eqref{D9} and $\bar{\lambda}_j=\bar{\mu}^2_j,j\geq1$ are the eigenvalues of Euler-Bernoulli beam's problem
\begin{align}\label{B3}
\begin{cases}
(p(x)y'')''=\lambda \rho(x)y,\\
y(0)=y(\pi)=y''(0)=y''(\pi)=0.
\end{cases}
\end{align}
Let us state our main theorem as follows.
\begin{theo}\label{Th1}
Assume  that Hypotheses \ref{hy1} holds for some $\hat{\epsilon}\in[0,{\epsilon}_0]$. Set $\alpha(x),\beta(x)\in H^4(0,\pi)$, $f\in \mathcal{C}_k$ for all $k\geq s+\kappa+3$, where
\begin{align}\label{C6}
\kappa:=6\tau+4\sigma+2
\end{align}
with $\sigma={\tau(\tau-1)}/{(2-\tau)}$, and fix $\tau\in(1,2),\gamma\in(0,1)$. Provided that  $\rho,p$ satisfy \eqref{A3}, if  $\frac{\epsilon}{\gamma^5\omega}\leq\delta_7$ is small enough, then there exist a constant $K>0$ depending on $\alpha,\beta, f, \epsilon_0, \hat{v}, \gamma, \gamma_0, \tau, s,\kappa$, a neighborhood $(\epsilon_1,\epsilon_2)$ of $\hat{\epsilon}$, $0<r<1$ and a $C^2$ map $v(\epsilon,w)$ defined on $(\epsilon_1,\epsilon_2)\times\left\{w\in W\cap \mathcal{H}^s:~\|w\|_{s}< r\right\}$ with values in $H^2_p(0,\pi)$ satisfying
\begin{align}\label{C7}
\| v(\epsilon,w)-v(\epsilon,0)\|_{H^2}\leq K\|w\|_s,\quad\|v(\epsilon,0)-\hat{v}\|_{H^2}\leq K|\epsilon-\hat{\epsilon}|,
\end{align}
a map $\tilde{w}\in C^1(A_\gamma;W\cap \mathcal{H}^s)$ satisfying
\begin{align}\label{C8}
\|\tilde{w}\|_{s}\leq\frac{K\epsilon}{\gamma\omega},
\quad\|\partial_{\omega}\tilde{w}\|_{s}\leq\frac{K\epsilon}{\gamma^5\omega},
\quad\|\partial_{\epsilon}\tilde{w}\|_{s}\leq\frac{K}{\gamma^5\omega},
\end{align}
 such that for all $(\epsilon,\omega)\in B_\gamma\subset A_\gamma$,
\begin{align*}
\tilde{u}:=v(\epsilon,\tilde{w}(\epsilon,\omega))+\tilde{w}(\epsilon,\omega)\in H^{6}{(0,\pi)}\cap H^{2}_p{(0,\pi)},\quad \forall t\in\mathbb{T}
\end{align*}
is a solution of equation \eqref{B1}, where $B_\gamma$ is defined in \eqref{G9}. Moreover the Lebesgue measures of the set $B_{\gamma}\subset A_\gamma$ and its section $B_{\gamma}(\epsilon)$ satisfy
\begin{align*}
\mathrm{meas}(B_{\gamma}(\epsilon)\cap(\omega',\omega''))\geq(1-K\gamma)(\omega''-\omega'),\quad
\mathrm{meas}(B_{\gamma}\cap\Omega)\geq(1-K\gamma)\mathrm{meas}(\Omega),
\end{align*}
where $\Omega:=(\epsilon',\epsilon'')\times(\omega',\omega'')$ stands for a rectangle contained in $(\epsilon_1,\epsilon_2)\times(2\gamma,+\infty)$.
\end{theo}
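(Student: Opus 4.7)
The plan is to realize the Lyapunov--Schmidt plus Nash--Moser strategy already encoded in the $(Q)$--$(P)$ splitting. First I would solve the $(Q)$-equation by the classical implicit function theorem: at $(\epsilon, w) = (\hat\epsilon, 0)$, Hypothesis \ref{hy1} guarantees that the linearization $h \mapsto (ph'')'' - \epsilon f_0'(\hat v) h$ is invertible on $H^2_p(0,\pi)$, so a $C^2$ map $(\epsilon, w) \mapsto v(\epsilon, w)$ exists on a neighborhood $(\epsilon_1, \epsilon_2) \times \{\|w\|_s < r\}$; the Lipschitz bounds claimed for $v(\epsilon, w) - v(\epsilon, 0)$ and $v(\epsilon, 0) - \hat v$ then drop out of controlling $\partial_w v$ and $\partial_\epsilon v$ via the algebra property of $\mathcal H^s$ (valid since $s > 1/2$). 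Substituting $v = v(\epsilon, w)$ into the $(P)$-equation reduces everything to the single range equation
\begin{equation*}
\mathcal F(\epsilon, \omega, w) := L_\omega w - \epsilon \Pi_W F\bigl(v(\epsilon, w) + w\bigr) = 0, \qquad w \in W \cap \mathcal H^s.
\end{equation*}

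I would solve this by a differentiable Nash--Moser iteration. Introduce dyadically increasing spectral truncations $W_n := \{ w = \sum_{0 < |l| \le N_n} w_l(x) e^{\mathrm{i} l t}\}$ with orthogonal projectors $P_n$, and iterate the Newton step $w_{n+1} = w_n - P_n T_n^{-1} P_n \mathcal F(\epsilon, \omega, w_n)$, where $T_n$ is the truncation to $W_n$ of the linearized operator $\mathcal F'(\epsilon, \omega, w_n)$. Quadratic convergence in $\|\cdot\|_s$, together with the quantitative bounds claimed for $\tilde w$ and its $(\epsilon, \omega)$-derivatives, would follow from the standard Nash--Moser bookkeeping once two ingredients are in place: (i) tame bounds for the composition $w \mapsto F(v(\epsilon, w) + w)$, which come from $f \in \mathcal C_k$ with $k \ge s + \kappa + 3$ and the algebra property in $\mathcal H^s$; and (ii) tame invertibility estimates for $T_n$ on the good parameter set.

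The step I expect to be the main obstacle is (ii). The unperturbed operator $L_\omega$ on $W$ has spectrum $\{-\omega^2 l^2 + \bar\lambda_j : l \ne 0,\ j \ge 1\}$, and the asymptotic formula $\bar\lambda_j = j^4 + a j^2 + b + O(1/j)$, obtained by the Liouville and Barcilon--Gottlieb transforms via Lemma 5.1 of Badanin--Korotyaev under assumption \eqref{A3}, shows these eigenvalues accumulate arbitrarily close to $0$ for typical $\omega$. On $A_\gamma$ the first-order Melnikov conditions $|\omega l - \bar\mu_j| > \gamma / l^\tau$ directly control the low-frequency divisors for $|l| \le N_0$; for $|l| > N_0$ one exploits the quartic growth of $\bar\lambda_j$ to bound $|-\omega^2 l^2 + \bar\lambda_j|$ below by a quantity of order $\max(l^2, j^4)$ outside a thin further resonant set, and then a multiscale / separation-of-variables analysis on the $(l, j)$ lattice must absorb the $\epsilon$-perturbation coming from $\mathcal F'$. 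Because $f$ is only finitely differentiable, the associated Green's function decays only polynomially off the diagonal rather than exponentially, so one cannot import Bourgain's analytic machinery directly; instead interpolation and tame estimates must soak up the derivative losses, which is exactly why the exponent $\kappa = 6\tau + 4\sigma + 2$ with $\sigma = \tau(\tau-1)/(2-\tau)$ enters the hypothesis on $k$.

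Finally, for the measure estimate I would take $B_\gamma \subset A_\gamma$ to be the subset of parameters on which every non-resonance inequality invoked along the iteration holds uniformly. For fixed $l$, the resonant strip $\{|\omega l - \bar\mu_j| \le \gamma / l^\tau\}$ has $\omega$-measure at most $2\gamma / l^{\tau+1}$; counting the admissible $j$ on a bounded $\omega$-interval using $\bar\mu_j \sim j^2$ gives $O(l^{1/2})$ contributing indices, so the total measure per $l$ is $O(\gamma/l^{\tau+1/2})$, which sums in $l$ because $\tau > 1$. Fubini on $\Omega = (\epsilon', \epsilon'') \times (\omega', \omega'')$ then yields the claimed $(1 - K\gamma)$ density. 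The higher regularity $\tilde u \in H^6(0, \pi) \cap H^2_p(0, \pi)$ is recovered a posteriori by rewriting the PDE as $(p \tilde u_{xx})_{xx} = -\omega^2 \rho \tilde u_{tt} + \epsilon f(t, x, \tilde u)$ and gaining four $x$-derivatives from the right-hand side.
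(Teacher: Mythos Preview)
Your overall architecture---Lyapunov--Schmidt plus Nash--Moser, implicit function theorem for $(Q)$, tame estimates for the composition, measure estimate by summing resonant strips, and the $H^6$ bootstrap from rewriting the equation---matches the paper. However, your account of the linearized-operator invertibility (your step (ii)) has genuine gaps that would block the argument as written.

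First, you suggest that the Melnikov conditions on $A_\gamma$ handle only $|l|\le N_0$ and that for larger $l$ the quartic growth of $\bar\lambda_j$ takes over. That is not correct: the small-divisor problem $|\omega^2 l^2-\lambda_j|\to 0$ persists for \emph{all} $l$, and the paper imposes first-order Melnikov conditions $|\omega l-\mu_j(\epsilon,w_{n-1})|>\gamma/l^\tau$ for every $l\le N_n$ at the $n$-th step, with the eigenvalues $\mu_j(\epsilon,w)$ depending on the current iterate. The nested good sets $A_n$ must be re-excised at each scale, and the final Cantor set $B_\gamma$ encodes all of these conditions simultaneously.

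Second, the paper needs a \emph{second} Diophantine condition $|\omega l-j|>\gamma/l^\tau$ (integers $j$, not eigenvalues). This is used, via the asymptotic $\mu_j(\epsilon,w)=j^2+\upsilon_0+O(j^{-2})$, to prove the key product lower bound
\[
\omega_l\,\omega_k\;\ge\;\frac{\tilde L^2\gamma^6\omega^2}{|l-k|^{2\sigma}},\qquad \omega_l:=\min_j|\omega^2l^2-\lambda_j(\epsilon,w)|,
\]
which is exactly where $\sigma=\tau(\tau-1)/(2-\tau)$ appears. With this bound the inversion of $\mathcal L_N$ is \emph{not} done by multiscale analysis at all: one writes $\mathcal L_N=\rho|\mathfrak L_{1,\mathrm D}|^{1/2}(\mathrm{sign})(\mathrm{Id}-\mathcal R)|\mathfrak L_{1,\mathrm D}|^{1/2}$ with a diagonal part $\mathfrak L_{1,\mathrm D}$ and a remainder $\mathcal R$ of size $O(\epsilon/(\gamma^3\omega))$ in $\|\cdot\|_s$, and inverts $\mathrm{Id}-\mathcal R$ by a Neumann series. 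So ``multiscale/separation-of-variables'' is the wrong mechanism here; the correct one is the off-diagonal product estimate above plus Neumann series.

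Finally, to obtain $\tilde w\in C^1(A_\gamma;W\cap\mathcal H^s)$ you need a Whitney-type extension: the iterates $w_n$ are only defined on the shrinking open sets $A_n$, and one multiplies the increments $h_n$ by smooth cut-offs supported in slightly smaller sets $\tilde A_n\subset A_n$ (losing a power of $\gamma$ in the derivative bounds, which is why $\gamma^5$ appears in the final estimates for $\partial_\omega\tilde w$ and $\partial_\epsilon\tilde w$). Your outline omits this step.
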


\subsection{Plan of the paper}
The rest of the paper is organized as follows.
In section \ref{sec:2.1}, we solve the $(Q)$-equation by the classical implicit function theorem under Hypothesis \ref{hy1}.
 The goal of section  \ref{sec:2.2}
 is  to solve the $(P)$-equation  under the ``first order Melnikov'' non-resonance conditions including initialization, iteration and measure estimates.
   Section \ref{sec:3}   is devoted to checking inversion of the linearized operators. Finally, we list the  the proof of some related results for the sake of completeness in section \ref{sec:4}.

\section{Proof of the main results}

The object of this section is to  complete the proof of  the main results.

\subsection{Solution of the $(Q)$-equation}\label{sec:2.1} We will  first solve the $(Q$)-equation via the classical implicit function theorem.
\begin{lemm}\label{lem1}
Let Hypothesis \ref{hy1} hold for some $\hat{\epsilon}\in[0,{\epsilon}_0]$. There exists a neighborhood $(\epsilon_1,\epsilon_2)$ of $\hat{\epsilon}$, and a $C^2$ map
\begin{align*}
v:(\epsilon_1,\epsilon_2)\times\left\{w\in W\cap \mathcal{H}^s:~\|w\|_{s}< r\right\}&\rightarrow H^2_p(0,\pi),\quad(\epsilon,w)\mapsto v(\epsilon,w;\cdot)
\end{align*}
such that $v(\epsilon,w; \cdot)$ solves the ($Q$)-equation with $v(\hat{\epsilon},0;t)=\hat{v(t)}$ and satisfies
\begin{align}\label{C5}
\| v(\epsilon,w; ~\cdot)-v(\epsilon,0; \cdot)\|_{H^2}\leq C\|w\|_s,\quad\|v(\epsilon,0; \cdot)-\hat{v} (\cdot)\|_{H^2}\leq C|\epsilon-\hat{\epsilon}|
\end{align}
for some constant $C>0$.
\end{lemm}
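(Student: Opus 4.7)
The plan is to apply the classical implicit function theorem to a fixed-point reformulation of the $(Q)$-equation. Because $p\in H^4(0,\pi)$ is strictly positive and the pinned boundary conditions $v(0)=v(\pi)=v''(0)=v''(\pi)=0$ make the fourth-order operator $\mathcal{L}v:=(p(x)v'')''$ positive definite and self-adjoint, $\mathcal{L}$ is an isomorphism from $H^4(0,\pi)\cap H^2_p(0,\pi)$ onto $L^2(0,\pi)$; in particular $\mathcal{L}^{-1}\colon L^2(0,\pi)\to H^2_p(0,\pi)$ is bounded and compact. I then define, on suitable Banach spaces,
\begin{equation*}
\Phi(\epsilon,w,v):=v-\epsilon\,\mathcal{L}^{-1}\Pi_V F(v+w),\qquad \Phi:(\epsilon_1,\epsilon_2)\times\{w\in W\cap\mathcal{H}^s:\|w\|_s<r\}\times H^2_p(0,\pi)\to H^2_p(0,\pi),
\end{equation*}
so that $\Phi(\epsilon,w,v)=0$ is exactly the $(Q)$-equation in \eqref{C1}.

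Next I verify the three hypotheses of the IFT at the reference point $(\hat{\epsilon},0,\hat{v})$. Since $f\in\mathcal{C}_k$ with $k\geq s+\kappa+3$ large enough, the Nemytskii map $(v,w)\mapsto f(t,x,v(x)+w(t,x))$ is of class $C^2$ from $H^2_p(0,\pi)\times(W\cap\mathcal{H}^s)$ into $\mathcal{H}^0$; this uses the embedding $\mathcal{H}^s\hookrightarrow L^\infty(\mathbb{T};H^2_p(0,\pi))$ from \eqref{B2} together with standard Moser/composition estimates. Composition with the bounded projector $\Pi_V$ (the time-average) and with $\mathcal{L}^{-1}$ preserves $C^2$ regularity, so $\Phi\in C^2$. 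By Hypothesis \ref{hy1}, $(p\hat{v}'')''=\hat{\epsilon}f_0(x,\hat{v})=\hat{\epsilon}\Pi_V F(\hat{v})$, and therefore $\Phi(\hat{\epsilon},0,\hat{v})=0$. Finally the partial derivative
\begin{equation*}
D_v\Phi(\hat{\epsilon},0,\hat{v})[h]=h-\hat{\epsilon}\,\mathcal{L}^{-1}\bigl(f_0'(\hat{v})\,h\bigr)
\end{equation*}
is a compact perturbation of the identity on $H^2_p(0,\pi)$, since $\mathcal{L}^{-1}$ is compact and multiplication by $f_0'(\hat{v})\in H^2(0,\pi)$ is bounded; hence it is Fredholm of index zero. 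Its kernel consists of $H^2_p$ solutions of the linearized equation \eqref{C4}, which are trivial by the nondegeneracy in Hypothesis \ref{hy1}, so $D_v\Phi(\hat{\epsilon},0,\hat{v})$ is an isomorphism.

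The classical $C^2$ implicit function theorem now yields a neighborhood $(\epsilon_1,\epsilon_2)$ of $\hat{\epsilon}$, a radius $r\in(0,1)$ and a unique $C^2$ map $(\epsilon,w)\mapsto v(\epsilon,w;\cdot)$ satisfying $v(\hat{\epsilon},0)=\hat{v}$ and $\Phi(\epsilon,w,v(\epsilon,w))\equiv0$, i.e.\ solving the $(Q)$-equation. The Lipschitz estimates in \eqref{C5} follow from the boundedness of the $C^1$ partial derivatives $\partial_\epsilon v$ and $\partial_w v$ on a small neighborhood of $(\hat{\epsilon},0)$ via the mean value inequality. The only delicate point in the argument is checking that $\Phi$ is genuinely $C^2$ as a map between Sobolev spaces when $F$ is read as a Nemytskii operator; this is precisely the reason for requiring $k$ large in terms of $s$, but the condition $k\ge s+\kappa+3$ in Theorem \ref{Th1} is vastly more than enough for this IFT step (it is needed later for the Nash--Moser iteration on the $(P)$-equation, not here).
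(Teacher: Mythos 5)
Your proof is correct and follows essentially the same route as the paper: both apply the classical $C^2$ implicit function theorem to the $(Q)$-equation, using composition-operator regularity (the paper's Lemma \ref{lem23}) for the $C^2$ smoothness and the nondegeneracy of Hypothesis \ref{hy1} for invertibility of the linearization, your $\mathcal{L}^{-1}$ fixed-point reformulation being only a cosmetic repackaging of the map the paper inverts directly. If anything, your write-up supplies a detail the paper leaves implicit, namely the Fredholm/compactness argument showing that the trivial kernel of \eqref{C4} actually yields invertibility of $D_v\Phi(\hat{\epsilon},0,\hat{v})$.
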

\begin{proof}
It follows from Hypothesis \ref{hy1} that the linearized operator
\begin{equation*}
h\mapsto (ph'')''-\hat{\epsilon}f'_0(\hat{v})h
\end{equation*}
is invertible on $V$. Since $f\in\mathcal{C}_k$, Lemma \ref{lem23} implies that the following map
\begin{equation*}
(\epsilon,w,v)\mapsto (pv'')''-\epsilon \Pi_{V}F(v+w)
\end{equation*}
belongs to  $C^2([\epsilon_1,\epsilon_2]\times (W\cap H^s)\times V;V)$. Therefore, by the implicit function theorem, there is a $C^2$-path $(\epsilon,w)\mapsto v(\epsilon,w;\cdot)$
such that the conclusions of the lemma hold.
\end{proof}

\subsection{Solution of the $(P)$-equation}\label{sec:2.2}
By virtue of a Nash-Moser iterative theorem, the purpose of this subsection is to solve the $(P)$-equation, i.e.,
\begin{align*}
L_{\omega}w=\epsilon \Pi_{W}\mathcal{F}(\epsilon,w),
\end{align*}
where $\mathcal{F}(\epsilon,w):=F(v(\epsilon,w)+w)$. Let $W=W_{N}\oplus W_{N}^{\bot}$, where
\begin{align*}
W_{N}:=\left\{w\in W:w=\sum_{1\leq|l|\leq N}w_{l}(x)e^{\mathrm{i}lt}\right\},\quad W_{N}^{\bot}:=\left\{w\in W:w=\sum_{|l|> N}w_{l}(x)e^{\mathrm{i}lt}\right\}.
\end{align*}
Then corresponding projection operators
$
\mathrm{P}_{N}:W\rightarrow W_{N},~\mathrm{P}^{\bot}_{N}:W\rightarrow W_{N}^{\bot}
$
satisfy
\begin{flalign*}
    &\mathrm{(\bf P1)}\quad\|\mathrm{P}_{N}u\|_{s+\vartheta}\leq N^{\vartheta}\|u\|_{s},\quad \forall u\in \mathcal{H}^s,\forall s,\vartheta\geq0. &\\
    &\mathrm{(\bf P2)}\quad\|\mathrm{P}^{\bot}_{N}u\|_{s}\leq N^{-\vartheta}\|u\|_{s+\vartheta},\quad \forall u\in \mathcal{H}^{s+\vartheta},\forall s,\vartheta\geq0.&
\end{flalign*}
Moreover, if $f \in\mathcal{C}_k$ satisfying $k\geq s'+3$ with $s'\geq s>1/2$, then it follows from Lemmata \ref{lem22}--\ref{lem23} and Lemma \ref{lem1} that composition operator $\mathcal{F}$  has the following standard properties:
\\$\mathrm{(\bf U1)}$(\text{Regularity}.)\quad$\mathcal{F}\in C^2(\mathcal{H}^s;\mathcal{H}^s)$ and $\mathcal{F},\mathrm{D}_{w}\mathcal{F},\mathrm{D}^2_{w}\mathcal{F}$ are bounded on $\{\|w\|_{s}\leq 1\}$, where ${\rm D}_{w} \mathcal{F} $ is the Fr\'{e}chet derivative of $\mathcal{F}$ with respect to $w$;
\\$\mathrm{(\bf U2)}$(\text{Tame}.)\quad$\forall u,h,\mathrm{h}\in H^s$ with $\|u\|_{s}\leq1$,
\begin{align*}
&\|\mathcal{F}(\epsilon,w)\|_{{s'}}\leq C(s')(1+\|w\|_{{s'}}),\quad\|\mathrm{D}_{w}\mathcal{F}(\epsilon,w)h\|_{{s'}}{\leq}C(s')(\|w\|_{{s'}}\|h\|_{s}+\|h\|_{{s'}}),\\
&\|\mathrm{D}^2_{w}\mathcal{F}(\epsilon,w)[h,\mathrm{h}]\|_{s'}{\leq} C(s')(\|w\|_{s'}\|h\|_{s}\|\mathrm{h}\|_{s}+\|h\|_{s'}\|\mathrm{h}\|_{s}+\|h\|_{s}\|\mathrm{h}\|_{s'});
\end{align*}
\\$\mathrm{(\bf U3)}$(\text{Taylor Tame}.)\quad $\forall s\leq s'\leq k-3$, $\forall w,h\in \mathcal{H}^{s'}$ and $\|w\|_{s}\leq1$, $\|h\|_{s}\leq1$,
\begin{align*}
\|\mathcal{F}(\epsilon,w+h)-\mathcal{F}(\epsilon,w)-\mathrm{D}_{w}\mathcal{F}(\epsilon,w)[h]\|_{s}\leq& C\|h\|^2_{s},\\
\|\mathcal{F}(\epsilon,w+h)-\mathcal{F}(\epsilon,w)-\mathrm{D}_{w}\mathcal{F}(\epsilon,w)[h]\|_{{s'}}\leq& C(s')(\|w\|_{{s'}}\|h\|^2_{s}+\|h\|_{s}\|h\|_{{s'}}).
\end{align*}
Let us define the linearized operators $\mathcal{L}_{N}(\epsilon,\omega,w)$ as
\begin{align}\label{D3}
\mathcal{L}_{N}(\epsilon,\omega,w)[h]:=-L_{\omega}h+\epsilon \mathrm{P}_{N}\Pi_{W}\mathrm{D}_{w}\mathcal{F}(\epsilon,w)[h],\quad\forall h\in W_{N},
\end{align}
where $L_{\omega}$ is given by \eqref{C2}. Denote by $\lambda_{j}(\epsilon,w)=\mu^2_j(\epsilon,w),j\in\mathbb{N}^{+}$ the eigenvalues of Euler-Bernoulli beam's problem
\begin{align}\label{D4}
\begin{cases}
(p(x)y'')''-{\epsilon}\Pi_{V}f'(t,x,v(\epsilon,w(t,x))+w(t,x))y=\lambda \rho(x)y,\\
y(0)=y(\pi)=y''(0)=y''(\pi)=0,
\end{cases}
\end{align}
where
\begin{align}\label{D5}
\mu_j(\epsilon,w)=
\begin{cases}
\mathrm{i}\sqrt{-\lambda_{j}(\epsilon,w)},\quad \text{if } \lambda_{j}(\epsilon,w)<0,\\
\sqrt{\lambda_{j}(\epsilon,w)},\quad \text{if }\lambda_{j}(\epsilon,w)>0.
\end{cases}
\end{align}
For fixed $\gamma\in(0,1),\tau\in(1,2)$, define $\Delta^{\gamma,\tau}_{N}(w)$ by
\begin{align}\label{D6}
\Delta^{\gamma,\tau}_{N}(w):=\Bigg\{(\epsilon,\omega)\in(\epsilon_1,\epsilon_2)\times(\gamma,+\infty):& \left|\omega l-\mu_j(\epsilon,w)\right|>\frac{\gamma}{l^{\tau}},\nonumber\\
 &\left|\omega l-{j}\right|>\frac{\gamma}{l^{\tau}},~\forall l=1,2,\cdots,N,j\geq1\Bigg\}.
\end{align}
Note that the non-resonance conditions in \eqref{D6} are trivially satisfied if  $\lambda_{j}(\epsilon,w)<0$.
\begin{lemm}\label{lem2}
Let $(\epsilon,\omega)\in\Delta^{\gamma,\tau}_{N}(w)$ for fixed $\gamma\in(0,1),\tau\in(1,2)$. There exist $K, K(s')>0$ such that if
\begin{equation}\label{D7}
\|w\|_{{s+\sigma}}\leq1\quad\text{with }\sigma:=\frac{\tau(\tau-1)}{2-\tau},
\end{equation}
for $\frac{\epsilon}{\gamma^3\omega}\leq \delta\leq\frac{\mathrm{c}}{L(s')}$ small enough, $\mathcal{L}_{N}(\epsilon,\omega,w)$ is invertible with
\begin{align*}
\left\|\mathcal{L}^{-1}_{N}(\epsilon,\omega,w)h\right\|_{s}\leq&\frac{K}{\gamma\omega}N^{\tau-1}\|h\|_{s},\quad \forall s>{1}/{2},\\
\left\|\mathcal{L}^{-1}_{N}(\epsilon,\omega,w)h\right\|_{{s'}}\leq& \frac{K(s')}{\gamma\omega}N^{\tau-1}\left(\|h\|_{{s'}}+\|w\|_{{s'+\sigma}}\|h\|_{s}\right),\quad\forall s'\geq s>{1}/{2}.
\end{align*}
Note that condition $\delta\leq\frac{\mathrm{c}}{L(s')}$ is to guarantee that Lemma \ref{lem7} holds.
\end{lemm}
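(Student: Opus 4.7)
The plan is to block-diagonalize $\mathcal{L}_N(\epsilon,\omega,w)$ in the Sturm--Liouville eigenbasis of the shifted problem \eqref{D4} and then invert the resulting nearly-diagonal operator by a Neumann series. The two non-resonance conditions defining $\Delta^{\gamma,\tau}_N(w)$ are tailor-made for this: the first gives lower bounds on the exact eigenvalues of the principal part, while the second controls the off-diagonal coupling induced by the purely time-oscillatory portion of the potential.

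Concretely, I would begin by expanding each $h \in W_N$ in the $L^2((0,\pi),\rho\,dx)$-orthonormal eigenbasis $\{\varphi_j(\epsilon,w;\cdot)\}_{j\geq 1}$ of \eqref{D4}, writing
\begin{equation*}
h(t,x) = \sum_{1 \leq |l| \leq N}\ \sum_{j \geq 1}\, \hat{h}_{l,j}\,\varphi_j(\epsilon,w;x)\,e^{\mathrm{i}lt},
\end{equation*}
and split $\mathcal{L}_N = \mathcal{D} + \mathcal{R}$, where $\mathcal{D}[h] := -L_\omega h + \epsilon\,\Pi_V f'(v+w)\,h$ is the diagonal operator and
\begin{equation*}
\mathcal{R}[h] := \epsilon\,P_N \Pi_W\!\left[(I-\Pi_V)f'(v+w)\,h\right] + \epsilon\,P_N \Pi_W\!\left[f'(v+w)\,D_w v(\epsilon,w)[h]\right]
\end{equation*}
collects the time-oscillatory multiplication and the contribution coming from the $w$-dependence of $v(\epsilon,w)$ via Lemma \ref{lem1}. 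In the eigenbasis, $\mathcal{D}/\rho$ acts as multiplication by $-\omega^2 l^2 + \lambda_j(\epsilon,w)$, and the factorization
\begin{equation*}
|{-}\omega^2 l^2 + \lambda_j| = |\omega l - \mu_j|\,|\omega l + \mu_j| \geq \frac{\gamma}{l^\tau}\,\omega l = \frac{\gamma\omega}{l^{\tau-1}}
\end{equation*}
holds for $(\epsilon,\omega) \in \Delta^{\gamma,\tau}_N(w)$ in both the hyperbolic case $\lambda_j \geq 0$ (where $\omega l + \mu_j \geq \omega l$) and the elliptic case $\lambda_j < 0$ (where $|\omega l + \mu_j|^2 = \omega^2 l^2 + |\mu_j|^2 \geq \omega^2 l^2$). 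This yields $\|\mathcal{D}^{-1}\|_{\mathcal{H}^s \to \mathcal{H}^s} \leq K N^{\tau-1}/(\gamma\omega)$.

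Returning this bound to the standard Fourier basis requires Lemma \ref{lem7}, which provides the tame comparison of the $w$-dependent eigenbasis $\{\varphi_j(\epsilon,w;\cdot)\}$ with the unperturbed basis from \eqref{B3} at a derivative loss $\sigma = \tau(\tau-1)/(2-\tau)$; this explains both the hypothesis \eqref{D7} and the extra $\|w\|_{s'+\sigma}$ term in the $s'$-estimate, while the smallness $\delta \leq \mathrm{c}/L(s')$ is precisely the threshold that keeps this comparison uniform on $\mathcal{H}^{s'}$. Combined with property $(\mathrm{\bf U2})$ applied to $f'(v+w)\,D_w v(\epsilon,w)[h]$ (using $\|D_w v\| \lesssim 1$ from \eqref{C5}), this gives the tame high-norm bound on $\mathcal{D}^{-1}$.

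To close the argument, I would use the Neumann series for $\mathcal{L}_N = \mathcal{D}(I + \mathcal{D}^{-1}\mathcal{R})$. The main obstacle is that the naive estimate $\|\mathcal{D}^{-1}\mathcal{R}\|_{s\to s} \lesssim \epsilon N^{\tau-1}/(\gamma\omega)$ is not uniform in $N$, whereas the lemma's smallness condition $\epsilon/(\gamma^3\omega) \leq \delta$ must be independent of $N$. To remove the $N^{\tau-1}$ factor I would exploit the block structure of $\mathcal{R}$: the time-oscillatory multiplier $(I-\Pi_V)f'(v+w)$ shifts the $t$-Fourier index by a nonzero amount, and the second non-resonance condition $|\omega l - j| > \gamma/l^\tau$ supplies an additional $\gamma^{-2}$ gain when inverting the resulting shifted diagonal, which together with the first non-resonance absorbs the $N^{\tau-1}$ factor and produces the $N$-independent smallness in $\epsilon/(\gamma^3\omega)$. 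The two displayed estimates of the lemma then follow from the convergent Neumann series combined with the tame bounds and property $(\mathrm{\bf U2})$; tracking this cancellation simultaneously in the low-norm invertibility and the high-norm tame estimate is the delicate part, and it is precisely the role of Lemma \ref{lem7} and of the smallness $\delta \leq \mathrm{c}/L(s')$.
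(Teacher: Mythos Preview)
Your overall architecture---split off the diagonal part in the Sturm--Liouville basis of \eqref{D4}, bound its inverse by the first Melnikov condition, then run a Neumann series---matches the paper. But two points are genuinely off and would prevent the argument from closing.

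First, you have misread the role of Lemma~\ref{lem7} and the origin of the loss $\sigma$. In the paper there is \emph{no} ``tame comparison of the $w$-dependent eigenbasis with the unperturbed basis''; the norm equivalence between the eigenbasis of \eqref{D4} and the standard $\mathcal{H}^s$-norm is a fixed constant $L_2/L_1$ coming from the equivalent scalar product $(\cdot,\cdot)_{\epsilon,w}$ (see \eqref{H6}--\eqref{H8}), with no derivative loss at all. Lemma~\ref{lem7} is simply the Neumann-series invertibility of $\mathrm{Id}-\mathcal{R}$ with a tame estimate. The number $\sigma=\tau(\tau-1)/(2-\tau)$ does not come from any basis change: it arises from the \emph{product} small-divisor bound \textbf{(F2)},
\[
\omega_l\,\omega_k \;\geq\; \frac{\tilde L^2 \gamma^6 \omega^2}{|l-k|^{2\sigma}},\qquad \omega_l:=\min_{j\geq 1}|\omega^2 l^2-\lambda_j(\epsilon,w)|,
\]
whose proof uses the second non-resonance condition $|\omega l-j|>\gamma/l^\tau$ together with the asymptotic $\mu_j\sim j^2+\upsilon_0$ to separate pairs of near-resonances. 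The factor $|l-k|^\sigma$ is then absorbed as a shift $\|b\|_{s'+\sigma}$, which is the true source of the $\|w\|_{s'+\sigma}$ term.

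Second, and this is the actual gap, your asymmetric factorization $\mathcal{L}_N=\mathcal{D}(I+\mathcal{D}^{-1}\mathcal{R})$ cannot yield an $N$-uniform smallness for $\mathcal{D}^{-1}\mathcal{R}$. Acting on the $k$-th time-Fourier block, $\mathcal{D}^{-1}\mathcal{R}$ only carries the divisor $\omega_k$, so the best you get is $\|\mathcal{D}^{-1}\mathcal{R}\|_s\lesssim \epsilon N^{\tau-1}/(\gamma\omega)$, exactly the obstacle you flag; your proposed fix (``an additional $\gamma^{-2}$ gain when inverting the resulting shifted diagonal'') does not correspond to any mechanism that removes the $N^{\tau-1}$. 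The paper's remedy is a \emph{symmetric} square-root factorization
\[
\mathcal{L}_N=\rho\,|\mathfrak{L}_{1,\mathrm D}|^{1/2}\bigl(|\mathfrak{L}_{1,\mathrm D}|^{-1/2}\mathfrak{L}_{1,\mathrm D}|\mathfrak{L}_{1,\mathrm D}|^{-1/2}-R_1-R_2\bigr)|\mathfrak{L}_{1,\mathrm D}|^{1/2},
\]
so that the off-diagonal piece becomes $R_1=|\mathfrak{L}_{1,\mathrm D}|^{-1/2}\mathfrak{L}_{1,\mathrm{ND}}|\mathfrak{L}_{1,\mathrm D}|^{-1/2}$ and carries $1/\sqrt{\omega_l\omega_k}$ on \emph{both} sides. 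It is precisely this that lets \textbf{(F2)} convert the small divisors into the weight $|l-k|^\sigma$ and produce the $N$-independent bound $\|\mathcal{R}_1 h\|_{s'}\leq \frac{\epsilon L(s')}{2\gamma^3\omega}(\|h\|_{s'}+\|w\|_{s'+\sigma}\|h\|_s)$. The two outer factors $|\mathfrak{L}_{1,\mathrm D}|^{-1/2}$ each cost $N^{(\tau-1)/2}/\sqrt{\gamma\omega}$, giving the final $N^{\tau-1}/(\gamma\omega)$ in the statement. Without this symmetrization your Neumann series does not converge uniformly in $N$.
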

Set
\begin{align}\label{D9}
N_n:=\lfloor e^{\mathfrak{c}2^{n}}\rfloor \quad\text{with } \mathfrak{c}=\ln N_0,
\end{align}
where the symbol $\lfloor~\cdot~\rfloor$ denotes the integer part.

Denote by $A_0$ the open set
\begin{align}\label{D8}
A_{0}:=\left\{(\epsilon,\omega)\in(\epsilon_1,\epsilon_2)\times(\gamma,+\infty):~\left|\omega l-\bar{\mu}_{j}\right|>\frac{\gamma}{l^{\tau}},~\forall l=1,\cdots,N_0, j\geq1\right\},
\end{align}
where $\bar{\lambda}_j=\bar{\mu}^2_j,j\geq1$ are the eigenvalues of Euler-Bernoulli beam's problem \eqref{B3}.

Let us state the inductive theorem.
\begin{theo}\label{theo2}
For $\frac{\epsilon}{\gamma^3\omega}\leq \delta_4$ (see Lemma \ref{lem12}) small enough, there exists a sequence of subsets $(\epsilon,\omega)\in A_{n}\subseteq A_{n-1}\subseteq\cdots\subseteq A_1\subseteq A_{0}$, where
\begin{equation*}
A_{n}:=\left\{(\epsilon,\omega)\in A_{n-1}: (\epsilon,\omega)\in\Delta^{\gamma,\tau}_{N_{n}}(w_{n-1})\right\},
\end{equation*}
and a sequence $w_{n}(\epsilon,\omega)\in W_{N_{n}}$
satisfying
\\
$\mathrm{(S1)}_{n\geq0}$\quad $\|w_n\|_{s+\sigma}\leq1$, and $\|\partial_{\omega}w_n\|_{s}\leq\frac{{K}_1\epsilon}{\gamma^2\omega}$, $\|\partial_{\epsilon}w_n\|_{s}\leq\frac{{K}_1}{\gamma\omega}$.
\\
$\mathrm{(S2)}_{n\geq1}$\quad$\|w_k-w_{k-1}\|_{s}\leq\frac{K_2\epsilon}{ \gamma\omega} N_{k}^{-\sigma-1}$, $\|\partial_{\omega}(w_k-w_{k-1})\|_{s}\leq \frac{ K_{3}\epsilon}{\gamma^2\omega}N_{k}^{-1}$, $\|\partial_{\epsilon}(w_k-w_{k-1})\|_{s}\leq \frac{ K_{3}}{\gamma\omega}N_{k}^{-1}$, $\forall 1\leq k\leq n$.
\\
$\mathrm{(S3)}_{n\geq0}$\quad  If $(\epsilon,\omega)\in A_{n}$,  then $w_{n}(\epsilon,\omega)$ is a solution of
\begin{align}\label{D10}
  L_{\omega}w-\epsilon \mathrm{P}_{N_n}\Pi_{W}\mathcal{F}(\epsilon,w)=0, \tag{$P_{N_{n}}$}
\end{align}
where $L_\omega$ is defined in \eqref{C2}.
\\
$\mathrm{(S4)}_{n\geq1}\quad$Setting $B_{n}:=1+\|w_n\|_{s+\kappa}$, $B'_{n}:=1+\|\partial_{\omega}w_n\|_{s+\kappa}$ and $B''_{n}:=1+\|\partial_{\epsilon}w_n\|_{s+\kappa}$, there exists some constant $\bar{K}=\bar{K}(N_0)$ such that
\begin{align*}
&B_{k}\leq C_1\bar{K}N^{{\tau-1+\sigma}}_{k+1},\quad B'_{k}\leq C_2{\bar{K}}{\gamma}^{-1} N^{3\tau+2\sigma-1}_{k+1}, ~B''_{k}\leq C_3{\bar{K}}{(\gamma\omega)}^{-1} N^{3\tau+2\sigma-1}_{k+1},\quad\forall 1\leq k\leq n,
\end{align*}
where $C_i:=C_i(\mathfrak{c},\tau,\sigma),i=1,2,3$, such that for all $(\epsilon,\omega)\in\cap_{n\geq0}A_n$, the sequence $\{w_n=w_n(\epsilon,\omega)\}_{n\geq0}$ converges uniformly in ${s}$-norm to a map
\begin{align*}
w\in C^{1}\left(\cap_{n\geq0}A_n\cap\left\{(\epsilon,\omega):{\epsilon}/{\omega}\leq\delta_4\gamma^3\right\}; W\cap\mathcal{H}^s\right).
\end{align*}
\end{theo}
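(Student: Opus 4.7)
The plan is a standard differentiable Nash--Moser iteration whose Newton step is powered by Lemma \ref{lem2} and the tame properties $\mathrm{(U1)}$--$\mathrm{(U3)}$ together with the smoothing bounds $\mathrm{(P1)}$--$\mathrm{(P2)}$. At each stage $n$ I construct $w_n\in W_{N_n}$ solving \eqref{D10} and verify that the bookkeeping $(S1)$--$(S4)$ closes under the choice $\kappa=6\tau+4\sigma+2$. For the base case $n=0$, a first-order eigenvalue perturbation for \eqref{D4} shows $\mu_j(\epsilon,0)=\bar\mu_j+O(\epsilon)$, so for $\epsilon/(\gamma\omega)$ small the non-resonance defining $A_0$ embeds into $\Delta^{\gamma/2,\tau}_{N_0}(0)$, and Lemma \ref{lem2} inverts $\mathcal L_{N_0}(\epsilon,\omega,0)$; a Banach contraction in $W_{N_0}\cap\mathcal H^s$ then produces $w_0$ of size $\|w_0\|_{s+\sigma}\lesssim N_0^{\sigma+\tau-1}\epsilon/(\gamma\omega)\le 1$, and its parameter derivatives are bounded by differentiating the fixed-point equation and reapplying Lemma \ref{lem2}.

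\textbf{Inductive step.} Assuming $(S1)_n$--$(S4)_n$, I seek $w_{n+1}=w_n+h_{n+1}$ with $h_{n+1}\in W_{N_{n+1}}$ solving
\begin{equation*}
\mathcal L_{N_{n+1}}(\epsilon,\omega,w_n)\,h_{n+1}=r_n+\epsilon\,\mathrm P_{N_{n+1}}\Pi_W\bigl(\mathcal F(\epsilon,w_n+h_{n+1})-\mathcal F(\epsilon,w_n)-\mathrm D_w\mathcal F(\epsilon,w_n)h_{n+1}\bigr),
\end{equation*}
where, since $\mathrm P_{N_{n+1}}\mathrm P_{N_n}=\mathrm P_{N_n}$ and $(S3)_n$ gives $L_\omega w_n=\epsilon\mathrm P_{N_n}\Pi_W\mathcal F(\epsilon,w_n)$, the residual telescopes to $r_n=\epsilon(\mathrm P_{N_n}-\mathrm P_{N_{n+1}})\Pi_W\mathcal F(\epsilon,w_n)$. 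Applying $\mathrm{(P2)}$, $\mathrm{(U2)}$ and $(S4)_n$ yields $\|r_n\|_s\lesssim\epsilon N_n^{-\kappa}B_n$. Because $(\epsilon,\omega)\in A_{n+1}\subseteq\Delta^{\gamma,\tau}_{N_{n+1}}(w_n)$ I may invert via Lemma \ref{lem2}; controlling the quadratic remainder by $\mathrm{(U3)}$, a contraction on $W_{N_{n+1}}\cap\mathcal H^s$ produces $h_{n+1}$ with $\|h_{n+1}\|_s\le\frac{K_2\epsilon}{\gamma\omega}N_{n+1}^{-\sigma-1}$, which is exactly $(S2)_{n+1}$. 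For the high norm, combining the tame bound of Lemma \ref{lem2} with the tame form of $\mathrm{(U2)}$ produces
\begin{equation*}
B_{n+1}\le B_n+\tfrac{C}{\gamma\omega}\,N_{n+1}^{\tau-1+\sigma}(1+B_n),
\end{equation*}
and the geometric growth $N_{n+1}=\lfloor e^{\mathfrak c 2^{n+1}}\rfloor$ with the budget $\kappa=6\tau+4\sigma+2$ absorbs this into $(S4)_{n+1}$.

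\textbf{Parameter derivatives and convergence.} Writing $h_{n+1}=\mathcal L_{N_{n+1}}^{-1}(w_n)[r_n+R_n(h_{n+1})]$ and using the identity $\partial_\omega \mathcal L_{N_{n+1}}^{-1}=-\mathcal L_{N_{n+1}}^{-1}(\partial_\omega\mathcal L_{N_{n+1}})\mathcal L_{N_{n+1}}^{-1}$, one sees that differentiating in $\omega$ costs, at worst, an extra factor $N_{n+1}^{2}/\gamma$ (from $\partial_\omega L_\omega=2\omega\rho\partial_{tt}$ and from the differentiated small-divisor), while differentiating in $\epsilon$ merely costs a factor $1/\gamma$; the gain $N_n^{-\kappa}$ in $r_n$ absorbs both, uniformly in $n$, precisely when $\kappa\ge 6\tau+4\sigma+2$. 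This delivers the $(S1)$ bounds on $\partial_\omega w_n,\partial_\epsilon w_n$ and, term by term, the $(S2)_{n+1}$ bounds on $\partial_\omega(w_{n+1}-w_n)$ and $\partial_\epsilon(w_{n+1}-w_n)$. Summing the resulting geometric series, $\{w_n\},\{\partial_\omega w_n\},\{\partial_\epsilon w_n\}$ are Cauchy in $\mathcal H^s$ on $\bigcap_{n\ge 0}A_n\cap\{\epsilon/\omega\le\delta_4\gamma^3\}$, so the limit $w$ is $C^1$ in the parameters.

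\textbf{Main obstacle.} The delicate issue is the algebraic bookkeeping of the exponent $\kappa$. Four distinct losses have to be balanced simultaneously: the $N_{n+1}^{\tau-1}$ loss from the small-divisor bound of Lemma \ref{lem2}, the $N_{n+1}^{\sigma}$ loss in the tame part, the smoothing gain $N_n^{-\kappa}$ in the residual, and the further $N_{n+1}$-losses incurred when one differentiates once in $\omega$ (which enters through $\partial_\omega L_\omega$ \emph{and} through the derivative of the small-divisor denominator). Propagating all of these through a common induction while keeping $\|w_n\|_{s+\sigma}\le 1$ is what forces the precise formula $\kappa=6\tau+4\sigma+2$, and this is where essentially all of the technical calculation in the proof is concentrated.
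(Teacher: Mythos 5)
Your inductive step is essentially the paper's own argument (same Newton-type fixed point $h_{n+1}=\mathcal L^{-1}_{N_{n+1}}(\epsilon,\omega,w_n)\bigl[r_n+R_n(h_{n+1})\bigr]$, same use of Lemma \ref{lem2}, of $(\mathrm{\bf P1})$--$(\mathrm{\bf P2})$, $(\mathrm{\bf U2})$--$(\mathrm{\bf U3})$, the same recursion $B_{n+1}\lesssim(1+N_{n+1}^{\tau-1+\sigma})B_n$, and parameter derivatives obtained by differentiating the equation and inverting $\mathcal L_{N_{n+1}}$ again), so that part is fine in outline. The genuine gap is in your base case. You claim that a first-order perturbation of the eigenvalues gives $A_0\subseteq\Delta^{\gamma/2,\tau}_{N_0}(0)$ for $\epsilon$ small, and then you invoke Lemma \ref{lem2} to invert $\mathcal L_{N_0}(\epsilon,\omega,0)$. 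But $\Delta^{\gamma,\tau}_{N}(w)$, as defined in \eqref{D6}, imposes \emph{two} families of conditions: $|\omega l-\mu_j(\epsilon,w)|>\gamma/l^\tau$ \emph{and} $|\omega l-j|>\gamma/l^\tau$ for all $j\geq1$, $1\leq l\leq N$. The second family is a Diophantine condition on $\omega$ alone (it is what makes claim {\bf(F2)}, hence Lemma \ref{lem5} and Lemma \ref{lem2}, work); it has nothing to do with the size of $\epsilon$ and is not implied by the conditions $|\omega l-\bar\mu_j|>\gamma/l^\tau$ defining $A_0$ in \eqref{D8}, since the $\bar\mu_j\sim j^2+\upsilon_0$ are sparse among the integers. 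Thus the inclusion you assert is false in general, and Lemma \ref{lem2} cannot be applied at step $0$ on all of $A_0$; yet the theorem requires $w_0$ with $(\mathrm S1)_0$, $(\mathrm S3)_0$ on the whole of $A_0$.

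The paper avoids this entirely: at step $0$ it never uses Lemma \ref{lem2}. It inverts the \emph{unperturbed} diagonal operator $\tfrac1\rho L_\omega$ on $W_{N_0}$ (Lemma \ref{lem9}), whose eigenvalues $-\omega^2l^2+\bar\lambda_j$ are controlled by the $A_0$ conditions alone, giving $\|(\tfrac1\rho L_\omega)^{-1}\|\leq \tilde K N_0^{\tau-1}/(\gamma\omega)$, and then produces $w_0$ by a contraction for $w\mapsto\epsilon(\tfrac1\rho L_\omega)^{-1}\tfrac1\rho\mathrm P_{N_0}\Pi_W\mathcal F(\epsilon,w)$, the $\epsilon$ prefactor (not a non-resonance condition on the perturbed spectrum) providing the smallness; the derivative bounds in $(\mathrm S1)_0$ follow from the implicit function theorem for $\mathscr U_0$. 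Your base case should be replaced by this argument (or you must add the conditions $|\omega l-j|>\gamma/l^\tau$, $l\leq N_0$, to the definition of $A_0$, which changes the statement and propagates into the measure estimate of Lemma \ref{lem18}). A secondary, minor point: your perturbation bound $\mu_j(\epsilon,0)=\bar\mu_j+O(\epsilon)$ would in any case need $\epsilon N_0^{\tau}/\gamma$ small, a condition of a different shape from $\epsilon/(\gamma^3\omega)\leq\delta_4$, though harmless since $N_0$ is fixed; the real obstruction is the missing second Diophantine family.
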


\subsubsection{Initialization}\label{sec:2.2.1}
Let us check that $(\mathrm{S1})_{0},(\mathrm{S3})_{0}$ hold.
\begin{lemm}\label{lem9}
For all $(\epsilon,\omega)\in A_0$, the operator $\frac{1}{\rho}L_{\omega}$ is invertible with
\begin{align*}
\|(\frac{1}{\rho}L_{\omega})^{-1}h\|_{s}\leq\frac{\tilde{K}N^{\tau-1}_0}{\gamma\omega}\|h\|_{s},\quad\forall s\geq0,\forall h\in W_{N_0}
\end{align*}
for some constan $\tilde{K}>0$.
\end{lemm}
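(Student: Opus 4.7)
The plan is to diagonalise $\tfrac{1}{\rho}L_\omega$ by separation of variables: Fourier expansion in $t$ together with the eigenfunction basis of the spatial Sturm--Liouville problem \eqref{B3} in $x$. On each basis element the operator reduces to a scalar, so the inverse estimate can be read off directly from the non-resonance conditions defining $A_0$.

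More precisely, let $\{\phi_j\}_{j\geq 1}\subset H^2_p(0,\pi)$ be an $L^2((0,\pi),\rho(x)\,\mathrm{d}x)$-orthonormal basis of eigenfunctions for \eqref{B3} with eigenvalues $\bar{\lambda}_j=\bar{\mu}_j^2$. Any $h\in W_{N_0}$ expands as $h(t,x)=\sum_{1\leq|l|\leq N_0,\,j\geq 1} h_{l,j}\phi_j(x)e^{\mathrm{i}lt}$, and $\tfrac{1}{\rho}L_\omega$ acts diagonally with eigenvalues $\bar{\lambda}_j-\omega^2l^2$, yielding formally
\begin{equation*}
\bigl(\tfrac{1}{\rho}L_\omega\bigr)^{-1}h(t,x)=\sum_{1\leq|l|\leq N_0,\,j\geq 1}\frac{h_{l,j}}{\bar{\lambda}_j-\omega^2l^2}\,\phi_j(x)\,e^{\mathrm{i}lt}.
\end{equation*}
For $(\epsilon,\omega)\in A_0$, $1\leq l\leq N_0$ and $j\geq 1$, I would factor $\bar{\lambda}_j-\omega^2l^2=(\bar{\mu}_j-\omega l)(\bar{\mu}_j+\omega l)$ and combine the defining bound $|\omega l-\bar{\mu}_j|>\gamma/l^\tau$ with the trivial inequality $|\omega l+\bar{\mu}_j|\geq \omega l$ (which remains valid when $\bar{\mu}_j$ is purely imaginary, cf.\ \eqref{D5}) to deduce
\begin{equation*}
|\bar{\lambda}_j-\omega^2l^2|\geq \frac{\gamma\omega}{l^{\tau-1}}\geq \frac{\gamma\omega}{N_0^{\tau-1}}.
\end{equation*}

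Next I would upgrade this pointwise spectral bound to the $H^2_p$-operator norm. The $\phi_j$ are $L^2(\rho\,\mathrm{d}x)$-orthonormal but not $H^2$-orthonormal, so I would appeal to the norm equivalence $\|y\|_{H^2}^2\simeq \sum_j (1+\bar{\lambda}_j)|y_j|^2$ for $y=\sum_j y_j\phi_j\in H^2_p$, which follows from standard elliptic regularity for the fourth-order operator $y\mapsto\tfrac{1}{\rho}(py'')''$ together with boundedness-below of its spectrum. Since the weight $(1+\bar{\lambda}_j)$ appears on both sides, the pointwise bound $|\bar{\lambda}_j-\omega^2l^2|^{-1}\leq N_0^{\tau-1}/(\gamma\omega)$ converts directly into the layer-wise estimate $\|u_l\|_{H^2}\leq \tilde{K}N_0^{\tau-1}(\gamma\omega)^{-1}\|h_l\|_{H^2}$ for $u:=(\tfrac{1}{\rho}L_\omega)^{-1}h$. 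Multiplying by $(1+l^{2s})$ and summing in $l$ yields the claim for every $s\geq 0$.

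The main technical point is the $L^2$-to-$H^2$ transfer in the last step. A mild annoyance is that the asymptotics $\bar{\lambda}_j=j^4+aj^2+b+O(1/j)$ recalled in the introduction only guarantee positivity for large $j$, so any finite number of non-positive eigenvalues must either be removed by a spectral shift or absorbed into the constant $\tilde{K}$. Apart from this bookkeeping, everything reduces to the non-resonance lower bound, which is the content of the definition of $A_0$.
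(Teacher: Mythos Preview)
Your proposal is correct and follows essentially the same route as the paper: diagonalise $\tfrac{1}{\rho}L_\omega$ on $W_{N_0}$ via the eigenbasis of \eqref{B3}, use the non-resonance condition in $A_0$ to bound $|\omega^2l^2-\bar{\lambda}_j|=|\omega l-\bar{\mu}_j||\omega l+\bar{\mu}_j|>\gamma\omega/l^{\tau-1}$, and convert the spectral bound into an $H^2_p$ estimate via the equivalent inner product $(y,z)=\int_0^\pi p y''z''+\rho yz\,\mathrm{d}x$. The paper records the norm equivalence you need in the Remark immediately following the proof; your observation about a possible finite spectral shift is harmless bookkeeping absorbed into $\tilde K$.
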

\begin{proof}
The eigenvalues of $\frac{1}{\rho}L_{\omega}$ on $W_{N_0}$ are
\begin{align*}
-\omega^2l^2+\bar{\lambda}_j,\quad \forall1\leq|l|\leq N_0,\forall j\geq1.
\end{align*}
For all $(\epsilon,\omega)\in A_0$, one has
\begin{align*}
|\omega^2l^2-\bar{\lambda}_j|=|\omega l-\bar{\mu}_j||\omega l+\bar{\mu}_j|>\frac{\gamma\omega}{l^{\tau-1}},\quad\forall 1\leq l\leq N_0,~\forall j\geq 1.
\end{align*}
Thus $\frac{1}{\rho}L_{\omega}$ is invertible on $W_{N_0}$ satisfying the conclusion of the lemma.
\end{proof}
\begin{rema}
In the proof of Lemma \ref{lem9}, we apply an equivalent scalar product $(\cdot,\cdot)$ on ${H}^2_{p}(0,\pi)$ as follows
\begin{align*}
(y,z):=\int_{0}^{\pi}py''z''+\rho yz\mathrm{d}t
\end{align*}
with
\begin{align*}
C_1\|y\|_{H^2}\leq\|y\|\leq C_2\|y\|_{H^2},\quad\forall y\in{H}^2_p(0,\pi)
\end{align*}
for some constants $C_1$, $C_2>0$.
\end{rema}
Then solving $(P_{N_0})$ is equivalent to the fixed point problem $w=\mathcal{U}_0(w)$, where
\begin{align*}
\mathcal{U}_0:{W}_{N_0}\rightarrow {W}_{N_0},\quad w\mapsto \epsilon \left(\frac{1}{\rho}L_{\omega}\right)^{-1}\frac{1}{\rho} \mathrm{P}_{N_0}\Pi_{W}\mathcal{F}(\epsilon,w).
\end{align*}
\begin{lemm}\label{lem10}
For all $(\epsilon,\omega)\in A_0$ and $\frac{\epsilon}{\gamma\omega}\leq \delta_1N^{1-\tau}_0\leq\delta$, the map $\mathcal{U}_0$ is a contraction in
\begin{align*}
\mathcal{B}(0,\rho_0):=\left\{w\in W_{N_0}:\|w\|_{s}\leq \rho_0\right\}\quad \text{with } \rho_0:=\frac{\epsilon}{ \gamma\omega}K_1 N^{\tau-1}_0,
\end{align*}
where ${\epsilon \gamma^{-1}K_1 N^{\tau}_0}<r$ ($r$ is given in Lemma \ref{lem1}).
\end{lemm}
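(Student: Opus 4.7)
The plan is a standard Banach fixed point setup adapted to the fully variable-coefficient setting: show $\mathcal{U}_0$ maps $\mathcal{B}(0,\rho_0)$ into itself and is Lipschitz with constant strictly less than one, both via Lemma \ref{lem9} combined with the tame properties $(\mathrm{U1})$--$(\mathrm{U2})$ of $\mathcal{F}$.

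First I would verify the self-map property. For $w\in\mathcal{B}(0,\rho_0)$, note that the smallness hypothesis $\epsilon\gamma^{-1}K_1N_0^{\tau}<r<1$ forces $\rho_0<1$, so by $(\mathrm{U1})$ the quantity $\mathcal{F}(\epsilon,w)$ is well-defined and by $(\mathrm{U2})$ with $s'=s$ we have $\|\mathcal{F}(\epsilon,w)\|_s\leq C(s)(1+\|w\|_s)\leq 2C(s)$. Since multiplication by $1/\rho$ is bounded on $\mathcal{H}^s$ (with constant depending on $\alpha,\beta$), and since the projectors $\mathrm{P}_{N_0}, \Pi_W$ are contractions in $\|\cdot\|_s$, applying Lemma \ref{lem9} gives
\begin{equation*}
\|\mathcal{U}_0(w)\|_s \;\leq\; \epsilon\,\frac{\tilde K N_0^{\tau-1}}{\gamma\omega}\,\Bigl\|\tfrac{1}{\rho}\mathrm{P}_{N_0}\Pi_W\mathcal{F}(\epsilon,w)\Bigr\|_s \;\leq\; \frac{\epsilon}{\gamma\omega}\,\tilde K N_0^{\tau-1}\,C'(s).
\end{equation*}
Choosing $K_1$ so that $K_1\geq 2\tilde K C'(s)$ (which is possible because $\tilde K,C'(s)$ are fixed) yields $\|\mathcal{U}_0(w)\|_s\leq \rho_0$, i.e.\ $\mathcal{U}_0(\mathcal{B}(0,\rho_0))\subseteq \mathcal{B}(0,\rho_0)$.

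Next I would show the contraction estimate. For $w_1,w_2\in\mathcal{B}(0,\rho_0)$, write
\begin{equation*}
\mathcal{F}(\epsilon,w_1)-\mathcal{F}(\epsilon,w_2)=\int_0^1 \mathrm{D}_w\mathcal{F}\bigl(\epsilon,\,w_2+t(w_1-w_2)\bigr)[w_1-w_2]\,\mathrm{d}t,
\end{equation*}
so that $(\mathrm{U2})$ at $s'=s$ (together with $\|w_2+t(w_1-w_2)\|_s\leq\rho_0\leq 1$) gives $\|\mathcal{F}(\epsilon,w_1)-\mathcal{F}(\epsilon,w_2)\|_s\leq C(s)\|w_1-w_2\|_s$. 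Applying Lemma \ref{lem9} once more yields
\begin{equation*}
\|\mathcal{U}_0(w_1)-\mathcal{U}_0(w_2)\|_s \;\leq\; \frac{\epsilon}{\gamma\omega}\,\tilde K N_0^{\tau-1}\,C''(s)\,\|w_1-w_2\|_s.
\end{equation*}
The hypothesis $\epsilon/(\gamma\omega)\leq \delta_1 N_0^{1-\tau}$ with $\delta_1$ chosen so that $\delta_1\tilde K C''(s)\leq 1/2$ produces a Lipschitz constant $\leq 1/2<1$, which together with the self-map property establishes that $\mathcal{U}_0$ is a contraction on $\mathcal{B}(0,\rho_0)$.

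The main (only) subtlety I anticipate is ensuring consistency of constants: the constant $K_1$ appearing in the radius $\rho_0$ must be fixed \emph{before} choosing $\delta_1$, and both must be compatible with the additional constraint $\epsilon\gamma^{-1}K_1 N_0^\tau<r$ (so that $\mathcal{F}$ is evaluated inside the ball of radius $r$ where Lemma \ref{lem1} guarantees the $(Q)$-equation solution $v(\epsilon,w)$ exists and is $C^2$). Since $N_0\geq 1$ and the extra factor $N_0^\tau$ is absorbed by taking $\delta$ small, this imposes no real obstacle but must be recorded explicitly in the statement's smallness condition.
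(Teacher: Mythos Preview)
Your proposal is correct and follows essentially the same approach as the paper: invoke Lemma \ref{lem9} together with the boundedness in $(\mathrm{U1})$--$(\mathrm{U2})$ to get both the self-map and the contraction estimates, choosing $K_1$ and $\delta_1$ accordingly. The only cosmetic difference is that the paper bounds $\|\mathrm{D}_w\mathcal{U}_0(w)\|_s$ directly (via the boundedness of $\mathrm{D}_w\mathcal{F}$ in $(\mathrm{U1})$) rather than writing the integral mean-value representation, but this is immaterial.
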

\begin{proof}
It follows from Lemma \ref{lem9} and property $(\mathrm{\bf U1})$ that for $\frac{\epsilon}{\gamma\omega}N^{\tau-1}_0\leq\delta_1$ small enough
\begin{align}
\|\mathcal{U}_0(w)\|_{s}{\leq}&\frac{\epsilon\tilde{K}N^{\tau-1}_0}{\gamma\omega}\left\|{1}/{\rho}\right\|_{H^2}\left\|\mathrm{P}_{N_0}\Pi_{W}\mathcal{F}(\epsilon,w)\right\|_s
\leq\frac{\epsilon}{ \gamma\omega}K_1 N^{\tau-1}_0,\nonumber\\
\|\mathrm{D}_w\mathcal{U}_0(w)\|_s=&\|\epsilon (\frac{1}{\rho}L_{\omega})^{-1}\frac{1}{\rho}\mathrm{P}_{N_0}\Pi_{W}\mathrm{D}_w\mathcal{F}(\epsilon,w)\|_s\leq\frac{\epsilon}{ \gamma\omega}K_1 N^{\tau-1}_0\leq\frac{1}{2}.\label{E2}
\end{align}
Thus the map $\mathcal{U}_0$ is a contraction in $\mathcal{B}(0,\rho_0)$.
\end{proof}
Denote by $w_0$ the unique solution of equation $(P_{N_0})$ in $\mathcal{B}(0,\rho_0)$. Then, for $\frac{\epsilon}{\gamma\omega}N^{\tau-1}_0\leq\delta_1$ small enough, applying $(\mathrm{P1})$ and Lemma \ref{lem10} yields
\begin{align}\label{E3}
\|w_0\|_{s+\sigma}\leq1,\quad\|w_0\|_{s+\kappa}=\|\epsilon(\frac{1}{\rho}L_{\omega})^{-1}\frac{1}{\rho}\mathrm{P}_{N_0}\Pi_{W}\mathcal{F}(\epsilon,w_0)\|_{s+\kappa}\leq \bar{K}.
\end{align}
Moreover let us define
\begin{align*}
\mathscr{U}_{0}(\epsilon,\omega,w):=w-\epsilon (\frac{1}{\rho}L_{\omega})^{-1}\frac{1}{\rho}\mathrm{P}_{N_0}\Pi_{W}\mathcal{F}(\epsilon,w).
\end{align*}
It is obvious that $\mathscr{U}_{0}(\epsilon,\omega,w_0)=0$.

By virtue of formula \eqref{E2}, for $\frac{\epsilon}{\gamma\omega}N^{\tau-1}_0\leq\delta_1$ small enough, we obtain
\begin{align*}
\mathrm{D}_{w}\mathscr{U}_{0}(\epsilon,\omega,w_0)=\mathrm{Id}-\epsilon (\frac{1}{\rho}L_{\omega})^{-1}\frac{1}{\rho}\mathrm{P}_{N_0}\Pi_{W}\mathrm{D}_{w}\mathcal{F}(\epsilon,w_0)
\end{align*}
is invertible.  Then the implicit function theorem implies  $w_0\in C^1(A_0;W_{N_0})$, which  gives
\begin{align*}
\partial_{\omega}w_{0}=&\epsilon(\mathrm{Id}-\mathrm{D}_{w}\mathcal{U}_{0}(w_0))^{-1}
\partial_{\omega}(\frac{1}{\rho}L_{\omega})^{-1}\frac{1}{\rho}\mathrm{P}_{N_0}\Pi_{W}\mathcal{F}(\epsilon,w_0),\\
\partial_{\epsilon}w_0=&(\mathrm{Id}-\mathrm{D}_{w}\mathcal{U}_{0}(w_0))^{-1}(\frac{1}{\rho}L_{\omega})^{-1}
\frac{1}{\rho}\mathrm{P}_{N_0}\Pi_{W}\mathcal{F}(\epsilon,w_0)
\end{align*}
by taking the derivatives of $\mathscr{U}_{0}(\epsilon,\omega,w_0)=0$ with respect to $\omega,\epsilon$. Moreover taking the derivative of the identity $(\frac{1}{\rho}L_{\omega})(\frac{1}{\rho}L_{\omega})^{-1}\mathfrak{w}=\mathfrak{w}$ with respect to $\omega$ yields
\begin{align*}
\partial_{\omega}(\frac{1}{\rho}L_{\omega})^{-1}\mathfrak{w}=-(\frac{1}{\rho}L_{\omega})^{-1}
(\frac{2\omega}{\rho}\partial_{tt})(\frac{1}{\rho}L_{\omega})^{-1}\mathfrak{w}.
\end{align*}
Then, in view of \eqref{E2}, $(\mathrm{\bf P1})$ and Lemma \ref{lem9}, we derive
\begin{align*}
\|\partial_{\omega}w_0\|_{s}\leq\frac{K_1\epsilon}{\gamma^2\omega},\quad\|\partial_{\epsilon}w_0\|_{s}\leq\frac{K_1}{\gamma\omega}.
\end{align*}
Above estimates  may give that for $\frac{\epsilon}{\gamma\omega}N^{\tau-1}_0\leq\delta_1$ small enough,
\begin{align}\label{E5}
\|\partial_{\omega}w_0\|_{s+\kappa}\stackrel{(\mathrm{\bf P1})}{\leq}{\bar{K}}{\gamma^{-1}},\quad
\|\partial_{\epsilon}w_0\|_{s+\kappa}\stackrel{(\mathrm{\bf P1})}{\leq}{\bar{K}}{(\gamma\omega)^{-1}}.
\end{align}
Thus we have properties $(\mathrm{S1})_{0}, (\mathrm{S3})_{0}$.

\subsubsection{Iteration}\label{sec:2.2.2}
Assume that we have get a solution $w_{n}\in W_{N_{n}}$ of \eqref{D10}
satisfying properties  $(\mathrm{S1})_{k}$--$(\mathrm{S4})_{k}$ for all $k\leq n$.
Next our purpose is to look for a solution $w_{n+1}\in W_{N_{n+1}}$ of
\begin{align}\label{F1}
L_{\omega}w-\epsilon \mathrm{P}_{N_{n+1}}\Pi_{W}\mathcal{F}(\epsilon,w)=0  \tag{$P_{N_{n+1}}$}
\end{align}
with conditions $(\mathrm{S1})_{n+1}$--$(\mathrm{S4})_{n+1}$.

 For $h\in W_{N_{n+1}}$, denote by
\begin{equation*}
w_{n+1}=w_{n}+h
\end{equation*}
a solution of \eqref{F1}. It follows from the fact 
$L_{\omega}w_{n}=\epsilon \mathrm{P}_{N_{n}}\Pi_{W}\mathcal{F}(\epsilon,w_{n})$ that
\begin{align*}
L_{\omega}(w_{n}+h)-\epsilon \mathrm{P}_{N_{n+1}}\Pi_{W}\mathcal{F}(\epsilon,w_{n}+h)=&L_{\omega}h+ L_{\omega}w_{n}-\epsilon \mathrm{P}_{N_{n+1}}\Pi_{W}\mathcal{F}(\epsilon,w_{n}+h)\\
=&-\mathcal{L}_{N_{n+1}}(\epsilon,\omega,w_n)h+R_{n}(h)+r_{n},
\end{align*}
where
\begin{align*}
&R_{n}(h):=-\epsilon \mathrm{P}_{N_{n+1}}(\Pi_{W}\mathcal{F}(\epsilon,w_n+h)-\Pi_{W}\mathcal{F}(\epsilon,w_n)-\Pi_{W}\mathrm{D}_{w}\mathcal{F}(\epsilon,w_n)[h]),\\
&r_{n}:=\epsilon \mathrm{P}_{N_{n}}\Pi_{W}\mathcal{F}(\epsilon,w_n)-\epsilon \mathrm{P}_{N_{n+1}}\Pi_{W}\mathcal{F}(\epsilon,w_n)=
-\epsilon \mathrm{P}^{\bot}_{N_{n}}\mathrm{P}_{N_{n+1}}\Pi_{W}\mathcal{F}(\epsilon,w_n).
\end{align*}
Since $(\epsilon,\omega)\in A_{n+1}\subseteq A_{n}$ and $\frac{\epsilon}{\gamma\omega}\leq\frac{\epsilon}{\gamma^3\omega}\leq \delta_1N^{1-\tau}_0\leq\delta$, by means of $(\mathrm{S1})_n$ and Lemma \ref{lem2}, we derive that the linearized operator $\mathcal{L}_{N_{n+1}}(\epsilon,\omega,w_n)$ is invertible with
\begin{align}
\|\mathcal{L}^{-1}_{N_{n+1}}(\epsilon,\omega,w_{n})h\|_{s}\leq& \frac{K}{\gamma\omega}N_{n+1}^{\tau-1}\|h\|_{s},\quad\forall s>{1}/{2},\label{F2}\\
\|\mathcal{L}^{-1}_{N_{n+1}}(\epsilon,\omega,w_{n})h\|_{s'}\leq& \frac{K(s')}{\gamma\omega}N_{n+1}^{\tau-1}(\|h\|_{s'}+\|w\|_{s'+\sigma}\|h\|_{s}),\quad \forall s'\geq s>{1}/{2}.\label{F3}
\end{align}
Define a map
\begin{align*}
\mathcal{U}_{n+1}: W_{N_{n+1}}\rightarrow W_{N_{n+1}},\quad h\mapsto\mathcal{L}^{-1}_{N_{n+1}}(\epsilon,\omega,w_n)(R_{n}(h)+r_{n}).
\end{align*}
Then solving $(P_{N_{n+1}})$ is reduced to find  the fixed point of  $h=\mathcal{U}_{n+1}(h)$.
\begin{lemm}\label{lem11}
For $(\epsilon,\omega)\in A_{n+1}$ and $\frac{\epsilon}{\gamma^3\omega}\leq\delta_2\leq\delta_1N^{1-\tau}_0$, there exists $K_2>0$  such that the map $U_{n+1}$ is a contraction in
\begin{align}\label{F4}
\mathcal{B}(0,\rho_{n+1}):=\left\{h\in W_{N_{n+1}}:\|h\|_{s}\leq\rho_{n+1}\right\}\quad \text{with }\rho_{n+1}:=\frac{\epsilon K_2}{ \gamma\omega} N_{n+1}^{-\sigma-1},
\end{align}
where $\frac{\epsilon K_2}{ \gamma\omega} <r$ ($r$ is seen in Lemma \ref{lem1}). Moreover the unique fixed point ${h}_{n+1}(\epsilon,\omega)$ of $\mathcal{U}_{n+1}$ satisfies
\begin{align}\label{F5}
\|{h}_{n+1}\|_{s}\leq\frac{\epsilon}{\gamma\omega} K_2N^{\tau-1}_{n+1}N^{-\kappa}_{n}B_{n}.
\end{align}
\end{lemm}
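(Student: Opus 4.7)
The plan is to apply the Banach fixed point theorem to $\mathcal{U}_{n+1}$ on the ball $\mathcal{B}(0,\rho_{n+1})\subset W_{N_{n+1}}$. The quantitative invertibility of $\mathcal{L}_{N_{n+1}}(\epsilon,\omega,w_n)$ via \eqref{F2} is already available, since $(\mathrm{S1})_n$ gives $\|w_n\|_{s+\sigma}\leq 1$, the assumption $(\epsilon,\omega)\in A_{n+1}$ enforces the Melnikov conditions defining $\Delta^{\gamma,\tau}_{N_{n+1}}(w_n)$, and the smallness $\epsilon/(\gamma^3\omega)\leq\delta_2\leq\delta_1 N_0^{1-\tau}$ ensures the hypothesis $\epsilon/(\gamma^3\omega)\leq\delta$ of Lemma \ref{lem2}.

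I would control the two source terms separately. For $r_n=-\epsilon\mathrm{P}^{\bot}_{N_n}\mathrm{P}_{N_{n+1}}\Pi_W\mathcal{F}(\epsilon,w_n)$, combining the smoothing property $(\mathrm{\bf P2})$ with $\vartheta=\kappa$ and the tame estimate $(\mathrm{\bf U2})$ at level $s'=s+\kappa$ yields
\[
\|r_n\|_s\leq \epsilon N_n^{-\kappa}\|\Pi_W\mathcal{F}(\epsilon,w_n)\|_{s+\kappa}\leq C\epsilon N_n^{-\kappa}B_n.
\]
For $R_n(h)$, the Taylor tame estimate $(\mathrm{\bf U3})$ at level $s$ gives $\|R_n(h)\|_s\leq C\epsilon\|h\|_s^2$, and differentiating in $h$ produces $\mathrm{D}R_n(h)=-\epsilon\mathrm{P}_{N_{n+1}}\Pi_W\bigl(\mathrm{D}_w\mathcal{F}(\epsilon,w_n+h)-\mathrm{D}_w\mathcal{F}(\epsilon,w_n)\bigr)$, whose operator norm at level $s$ is of order $\epsilon\|h\|_s$ again by $(\mathrm{\bf U3})$.

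Combining these two bounds with \eqref{F2} gives
\[
\|\mathcal{U}_{n+1}(h)\|_s\leq \frac{CKN_{n+1}^{\tau-1}}{\gamma\omega}\bigl(\epsilon\|h\|_s^2+\epsilon N_n^{-\kappa}B_n\bigr).
\]
On $\mathcal{B}(0,\rho_{n+1})$ the quadratic contribution is absorbed because the resulting prefactor $\epsilon^2/(\gamma^2\omega^2)\cdot N_{n+1}^{\tau-\sigma-2}$ is small (the exponent $\tau-\sigma-2$ is negative throughout $\tau\in(1,2)$ and $\epsilon/(\gamma\omega)$ is small). The remaining term fits inside $\rho_{n+1}=(\epsilon K_2/\gamma\omega)N_{n+1}^{-\sigma-1}$ once $K_2$ is chosen large, provided the exponents balance; using $(\mathrm{S4})_n$ in the form $B_n\leq C_1\bar{K}N_{n+1}^{\tau-1+\sigma}$ together with $N_{n+1}\leq N_n^2$, the condition reduces to $\kappa\geq 4\tau+4\sigma-2$, which is amply satisfied by $\kappa=6\tau+4\sigma+2$. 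An analogous computation based on the mean value theorem and the bound on $\mathrm{D}R_n$ produces the contraction estimate $\|\mathcal{U}_{n+1}(h)-\mathcal{U}_{n+1}(\tilde h)\|_s\leq\tfrac12\|h-\tilde h\|_s$ for small $\epsilon/(\gamma^3\omega)$. Banach's theorem then furnishes the unique fixed point $h_{n+1}$, and evaluating the self-map estimate at $h=h_{n+1}$ while absorbing the quadratic term into the left-hand side yields \eqref{F5}. The condition $\epsilon K_2/(\gamma\omega)<r$ guarantees $\|w_n+h_{n+1}\|_s<r$, so that Lemma \ref{lem1} may still be invoked to define $v(\epsilon,w_{n+1})$ and hence $\mathcal{F}$.

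The delicate point is the bookkeeping of exponents: the loss $N_{n+1}^{\tau-1}$ from the inverse in \eqref{F2} and the growth $N_{n+1}^{\tau-1+\sigma}$ of $B_n$ encoded in $(\mathrm{S4})_n$ must together be defeated by the smoothing factor $N_n^{-\kappa}$ under $N_{n+1}\sim N_n^2$, while still leaving the room $N_{n+1}^{-\sigma-1}$ dictated by $\rho_{n+1}$. This is precisely the motivation for the formula \eqref{C6} for $\kappa$; once that choice is fixed, the remaining inequalities close routinely.
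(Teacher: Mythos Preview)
Your proposal is correct and follows essentially the same route as the paper: bound $\|r_n\|_s$ via $(\mathrm{\bf P2})$ and $(\mathrm{\bf U2})$, bound $\|R_n(h)\|_s$ via $(\mathrm{\bf U3})$, feed both through \eqref{F2}, check the two resulting smallness conditions $\frac{\epsilon K'}{\gamma\omega}N_{n+1}^{\tau-1}\rho_{n+1}\leq\tfrac12$ and $\frac{\epsilon K'}{\gamma\omega}N_{n+1}^{\tau-1}N_n^{-\kappa}B_n\leq\tfrac12\rho_{n+1}$, and then absorb the quadratic term at the fixed point to obtain \eqref{F5}. Your explicit exponent count $\kappa\geq 4\tau+4\sigma-2$ is a nice addition that the paper leaves implicit under the blanket reference to \eqref{C6} and $(\mathrm{S4})_n$.
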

\begin{proof}
Using properties $(\mathrm{\bf P2})$, $(\mathrm{\bf U2})$--$(\mathrm{\bf U3})$ establishes
\begin{align*}
&\|R_{n}(h)\|_s{\leq}\epsilon C\|h\|^2_s,\quad\|r_n\|_s{\leq}\epsilon C(\kappa)N^{-\kappa}_{n}B_n,
\end{align*}
where $B_n$ is seen in $(\mathrm{S4})_n$. Based on this together with \eqref{F2}, we get
\begin{align}
\|\mathcal{U}_{n+1}(h)\|_{s}
\leq&\frac{\epsilon {K}'}{\gamma\omega}N^{\tau-1}_{n+1}\|h\|^2_s+\frac{\epsilon {K}'}{\gamma\omega}N^{\tau-1}_{n+1}N^{-\kappa}_{n}B_{n}\label{F7}\\
\leq&\frac{\epsilon {K}'}{\gamma\omega}N^{\tau-1}_{n+1}\rho^2_{n+1}+\frac{\epsilon {K}'}{\gamma\omega}N^{\tau-1}_{n+1}N^{-\kappa}_{n}B_{n}.\nonumber
\end{align}
Obviously, one has $\sigma>\tau-1$ according to the fact  $\tau\in(1,2)$. Then using the definition of $\rho_{n+1}$,  \eqref{C6} and $(\mathrm{S4})_n$,
we derive that for $\frac{\epsilon}{\gamma^3\omega}\leq\delta_2$ small enough,
\begin{align}\label{F8}
\frac{\epsilon {K}'}{\gamma\omega}N^{\tau-1}_{n+1}\rho_{n+1}\leq\frac{1}{2},\quad\frac{\epsilon {K}'}{\gamma\omega}N^{\tau-1}_{n+1}N^{-\kappa}_{n}B_{n}\leq\frac{\rho_{n+1}}{2},
\end{align}
which leads to $\|\mathcal{U}_{n+1}(h)\|_{s}\leq \rho_{n+1}$. Moreover taking the derivative  of  $\mathcal{U}_{n+1}$ with respect to $h$ yields
\begin{align}\label{F9}
\mathrm{D}_h\mathcal{U}_{n+1}(h)[\mathfrak{w}]=-\epsilon\mathcal{L}^{-1}_{N_{n+1}}(\epsilon,\omega,w_n)\mathrm{P}_{N_{n+1}}(\Pi_{W}\mathrm{D}_{w}
\mathcal{F}(\epsilon,w_n+h)-\Pi_W\mathrm{D}_{w}\mathcal{F}(\epsilon,w_n))\mathfrak{w}.
\end{align}
For $\frac{\epsilon}{\gamma^3\omega}\leq\delta_2$ small enough, it follows from  $(\mathrm{\bf U1})$--$(\mathrm{\bf U2})$ and $(\mathrm{S1})_{n}$ that
\begin{align}\label{F10}
\|\mathrm{D}_h\mathcal{U}_{n+1}(h)[\mathfrak{w}]\|_{s}\stackrel{ \eqref{F2}}{\leq}\frac{\epsilon{K}'}{\gamma\omega}N^{\tau-1}_{n+1}
\|h\|_{s}\|\mathfrak{w}\|_{s}\leq\frac{\epsilon{K}'}{\gamma\omega}N^{\tau-1}_{n+1}\rho_{n+1}\|\mathfrak{w}\|_{s}\stackrel{\eqref{F8}}\leq\frac{1}{2}\|\mathfrak{w}\|_{s}.
\end{align}
Hence $\mathcal{U}_{n+1}$ is a contraction in $\mathcal{B}(0,\rho_{n+1})$.

Denote by ${h}_{n+1}(\epsilon,\omega)$ the unique fixed point of $\mathcal{U}_{n+1}$. With the help of \eqref{F4}, \eqref{F7}--\eqref{F8}, one has
\begin{align*}
\|h_{n+1}\|_{s}\leq\frac{1}{2}\|h_{n+1}\|_s+\frac{\epsilon {K}'}{\gamma\omega}N^{\tau-1}_{n+1}N^{-\kappa}_{n}B_{n},
\end{align*}
which arrives at \eqref{F5}.
\end{proof}
If $\frac{\epsilon}{\gamma^3\omega}\leq\delta_3$ with $\delta_3\leq\delta_2$ is small enough, setting $h_0=w_0$, applying {Lemmata }\ref{lem10}--\ref{lem11} yields
\begin{equation*}
\|w_{n+1}\|_{s+\sigma}\leq\sum\limits_{i=0}^{n+1}\|h_{i}\|_{s+\sigma}\stackrel{(\mathrm{P1})}{\leq}
\sum\limits_{i=0}^{n+1}N^{\sigma}_{i}\|h_{i}\|_{s}
{\leq}\sum\limits_{i=1}^{n+1}N^{\sigma}_{i}\frac{\epsilon K_2}{\gamma\omega}N^{-\sigma-1}_{i}+N^{\sigma}_0\frac{\epsilon K_1 }{\gamma\omega }N^{\tau-1}_0\leq1.
\end{equation*}
\begin{lemm}\label{lem12}
For $(\epsilon,\omega)\in A_{n+1}$ and $\frac{\epsilon}{\gamma^3\omega}\leq\delta_4\leq\delta_3$, the map $h_{n+1}$ belongs to
$C^1\big(A_{n+1}\cap\{(\epsilon,\omega):{\epsilon}/{\omega}\leq\delta_4\gamma^3\};W_{N_{n+1}}\big)$ and satisfies that for some constant $K_3>0$,
\begin{align*}
\|\partial_{\omega}h_{n+1}\|_{s}\leq\frac{ K_{3}\epsilon}{\gamma^2\omega}N_{n+1}^{-1}, \quad\|\partial_{\epsilon}h_{n+1}\|_{s}\leq\frac{ K_{3}}{\gamma\omega}N_{n+1}^{-1}.
\end{align*}
\end{lemm}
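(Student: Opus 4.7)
The strategy is to apply the classical implicit function theorem to the map $(\epsilon,\omega,h)\mapsto h-\mathcal{U}_{n+1}(\epsilon,\omega,h)$ at the fixed point $h_{n+1}(\epsilon,\omega)$ delivered by Lemma \ref{lem11}. The contraction bound $\|\mathrm{D}_{h}\mathcal{U}_{n+1}(h)\|_{s}\leq 1/2$ already proved there guarantees that $\mathrm{Id}-\mathrm{D}_{h}\mathcal{U}_{n+1}(h_{n+1})$ is invertible on $W_{N_{n+1}}$ with norm at most $2$, so the IFT produces $h_{n+1}\in C^{1}$ on the required set together with
\begin{equation*}
\partial_{\star}h_{n+1}=(\mathrm{Id}-\mathrm{D}_{h}\mathcal{U}_{n+1}(h_{n+1}))^{-1}\,\partial_{\star}\mathcal{U}_{n+1}(\epsilon,\omega,h)\big|_{h=h_{n+1}},\qquad \star\in\{\omega,\epsilon\}.
\end{equation*}
It remains to bound $\|\partial_{\star}\mathcal{U}_{n+1}|_{h_{n+1}}\|_{s}$ at the stated rates.

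For $\star=\omega$, the product rule applied to $\mathcal{U}_{n+1}=\mathcal{L}_{N_{n+1}}^{-1}(R_{n}+r_{n})$ produces three contributions: (i) $(\partial_{\omega}\mathcal{L}_{N_{n+1}}^{-1})(R_{n}+r_{n})$, where $\partial_{\omega}\mathcal{L}_{N_{n+1}}^{-1}=-\mathcal{L}_{N_{n+1}}^{-1}(\partial_{\omega}\mathcal{L}_{N_{n+1}})\mathcal{L}_{N_{n+1}}^{-1}$ and $\partial_{\omega}\mathcal{L}_{N_{n+1}}$ is dominated by $2\omega\rho\partial_{tt}$ of operator norm $\leq C\omega N_{n+1}^{2}$ on $W_{N_{n+1}}$, the remaining piece $\epsilon \mathrm{P}_{N_{n+1}}\Pi_{W}\mathrm{D}_{w}^{2}\mathcal{F}(\epsilon,w_{n})\partial_{\omega}w_{n}$ being lower order by $(\mathrm{S1})_{n}$; (ii) $\mathcal{L}_{N_{n+1}}^{-1}\partial_{\omega}R_{n}|_{h}$, obtained by chain rule through $w_{n}(\epsilon,\omega)$; (iii) $\mathcal{L}_{N_{n+1}}^{-1}\partial_{\omega}r_{n}$ with $\partial_{\omega}r_{n}=-\epsilon\mathrm{P}_{N_{n}}^{\bot}\mathrm{P}_{N_{n+1}}\Pi_{W}\mathrm{D}_{w}\mathcal{F}(\epsilon,w_{n})\partial_{\omega}w_{n}$. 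Using Lemma \ref{lem2}, the bounds $\|R_{n}(h_{n+1})\|_{s}\leq C\epsilon\|h_{n+1}\|_{s}^{2}$ and $\|r_{n}\|_{s}\leq C\epsilon N_{n}^{-\kappa}B_{n}$ already recorded inside Lemma \ref{lem11}, the tame estimates $(\mathrm{U1})$--$(\mathrm{U2})$, and the inductive hypotheses $(\mathrm{S1})_{n}$--$(\mathrm{S4})_{n}$, I expect (iii) to be the binding term: the $\mathrm{P}_{N_{n}}^{\bot}$ gain yields a factor $N_{n}^{-\kappa}$, while tameness at $s'=s+\kappa$ combined with $B_{n}'\leq C\gamma^{-1}N_{n+1}^{3\tau+2\sigma-1}$ from $(\mathrm{S4})_{n}$ produces $\|\mathcal{L}^{-1}\partial_{\omega}r_{n}\|_{s}\leq C\epsilon\gamma^{-2}\omega^{-1}N_{n+1}^{4\tau+2\sigma-2}N_{n}^{-\kappa}$.

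The choice $\kappa=6\tau+4\sigma+2$ from \eqref{C6}, together with the quadratic growth $N_{n+1}\leq N_{n}^{2}$ built into \eqref{D9} and $\tau<2$, is then exactly what is needed to collapse this to the required $\leq K_{3}\epsilon/(\gamma^{2}\omega)N_{n+1}^{-1}$, and the analogous (simpler) estimates on (i)--(ii) are dominated by it. The $\partial_{\epsilon}$ estimate proceeds identically but more cleanly: $L_{\omega}$ has no explicit $\epsilon$-dependence so $\partial_{\epsilon}\mathcal{L}_{N_{n+1}}$ contains no $\omega N_{n+1}^{2}$ factor, and the $(\mathrm{S1})_{n}$ bound $\|\partial_{\epsilon}w_{n}\|_{s}\leq K_{1}/(\gamma\omega)$ loses one fewer power of $\gamma$ than $\|\partial_{\omega}w_{n}\|_{s}$, together delivering $\|\partial_{\epsilon}h_{n+1}\|_{s}\leq K_{3}/(\gamma\omega)N_{n+1}^{-1}$. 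The principal difficulty throughout is purely the exponent bookkeeping: verifying in every term of the product-rule expansion that the contributions of $N_{n+1}$ and $\gamma$ are absorbed by $\kappa=6\tau+4\sigma+2$ and the $\mathrm{P}_{N_{n}}^{\bot}$-gain, since all required functional-analytic tools (contraction, tame estimates, invertibility of $\mathcal{L}_{N_{n+1}}$) have already been supplied by the preceding lemmas.
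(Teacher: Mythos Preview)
Your proposal is correct and takes essentially the same approach as the paper. The only cosmetic difference is that the paper applies the implicit function theorem to the map $\mathscr{U}_{n+1}(\epsilon,\omega,h):=-L_{\omega}(w_{n}+h)+\epsilon\mathrm{P}_{N_{n+1}}\Pi_{W}\mathcal{F}(\epsilon,w_{n}+h)$ rather than to $h-\mathcal{U}_{n+1}(h)$, and then uses the $(P_{N_{n}})$-equation for $w_{n}$ to simplify $\partial_{\omega}\mathscr{U}_{n+1}$ into exactly the three pieces $-2\omega\rho(h_{n+1})_{tt}$, $\epsilon\mathrm{P}_{N_{n}}^{\bot}\mathrm{P}_{N_{n+1}}\Pi_{W}\mathrm{D}_{w}\mathcal{F}(\epsilon,w_{n})\partial_{\omega}w_{n}$, and $\epsilon\mathrm{P}_{N_{n+1}}(\Pi_{W}\mathrm{D}_{w}\mathcal{F}(\epsilon,w_{n+1})-\Pi_{W}\mathrm{D}_{w}\mathcal{F}(\epsilon,w_{n}))\partial_{\omega}w_{n}$; since $\mathscr{U}_{n+1}=\mathcal{L}_{N_{n+1}}(\epsilon,\omega,w_{n})\bigl(h-\mathcal{U}_{n+1}(h)\bigr)$ and $\mathrm{D}_{h}\mathscr{U}_{n+1}(h_{n+1})=\mathcal{L}_{N_{n+1}}(\epsilon,\omega,w_{n})(\mathrm{Id}-\mathrm{D}_{h}\mathcal{U}_{n+1}(h_{n+1}))$, the two formulations are equivalent and your product-rule terms (i)--(iii) regroup into precisely the same estimates, with the same binding term and the same use of $(\mathrm{S4})_{n}$ and $\kappa=6\tau+4\sigma+2$.
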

\begin{proof}
Let us define
\begin{align*}
\mathscr{U}_{n+1}(\epsilon,\omega,h):=-L_{\omega}(w_{n}+h)+\epsilon  \mathrm{P}_{N_{n+1}}\Pi_{W}\mathcal{F}(\epsilon,w_n+h).
\end{align*}
Lemma \ref{lem11} shows that $h_{n+1}(\epsilon,\omega)$ is a solution to above equation, i.e.,
\begin{align*}
\mathscr{U}_{n+1}(\epsilon,\omega,h_{n+1})=0,
\end{align*}
which carries out
\begin{align}\label{F13}
\mathrm{D}_{h}\mathscr{U}_{n+1}(\epsilon,\omega,h_{n+1}){=}\mathcal{L}_{N_{n+1}}(\epsilon,\omega,w_{n+1})
\stackrel{\eqref{F9}}{=}\mathcal{L}_{N_{n+1}}(\epsilon,\omega,w_{n})(\mathrm{Id}-\mathrm{D}_{h}\mathcal{U}(h_{n+1})).
\end{align}
By means of \eqref{F10}, the operator $\mathcal{L}_{N_{n+1}}(\epsilon,\omega,w_{n+1})$ is invertible with
\begin{equation}\label{F14}
\|\mathcal{L}^{-1}_{N_{n+1}}(\epsilon,\omega,w_{n+1})\|_{s}\leq\|(\mathrm{Id}-\mathrm{D}_{h}\mathcal{G}(h_{n+1}))^{-1}\mathcal{L}^{-1}_{N_{n+1}}(\epsilon,\omega,w_{n})\|_{s}
\stackrel{\eqref{F2}}{\leq}\frac{2K}{\gamma\omega}N_{n+1}^{\tau-1}.
\end{equation}
Then the implicit function theorem shows  $h_{n+1}\in C^1(A_{n+1};W_{N_{n+1}})$, which  infers
\begin{equation*}
\partial_{\omega,\epsilon}\mathscr U_{n+1}(\epsilon,\omega,h_{n+1})+\mathrm{D}_{h}\mathscr U_{n+1}(\epsilon,\omega,h_{n+1})\partial_{\omega,\epsilon}h_{n+1}=0.
\end{equation*}
Consequently, using $w_{n+1}=w_n+h_{n+1}$  and the fact
$L_{\omega}w_{n}=\epsilon \mathrm{P}_{N_{n}}\Pi_{W}\mathcal{F}(\epsilon,w_{n})$, we obtain
\begin{equation}\label{F15}
\partial_{\omega}h_{n+1}=-\mathcal{L}^{-1}_{N_{n+1}}(\epsilon,\omega,w_{n+1})\partial_{\omega}\mathscr U_{n+1}(\epsilon,\omega,h_{n+1}),
\end{equation}
where
\begin{align}\label{F12}
\partial_{\omega}\mathscr U_{n+1}(\epsilon,\omega,h_{n+1})
=&-2\omega\rho(x)(h_{n+1})_{tt}+\epsilon \mathrm{P}^{\bot}_{N_{n}}\mathrm{P}_{N_{n+1}}\Pi_{W}\mathrm{D}_{w}\mathcal{F}(\epsilon,w_n)\partial_{\omega}w_{n}\nonumber\\
&+\epsilon\mathrm{ P}_{N_{n+1}}(\Pi_{W}\mathrm{D}_{w}\mathcal{F}(\epsilon,w_{n+1})-\Pi_{W}\mathrm{D}_{w}\mathcal{F}(\epsilon,w_n))\partial_{\omega}w_{n},\\
\partial_{\epsilon}\mathscr U_{n+1}(\epsilon,\omega,h_{n+1})
=&\mathrm{P}^{\bot}_{N_n}\mathrm{P}_{N_{n+1}}\mathcal{F}(\epsilon,w_n)+\mathrm{P}_{N_{n+1}}
(\Pi_{W}\mathcal{F}(\epsilon,w_{n+1})-\Pi_{W}\mathcal{F}(\epsilon,w_n))\nonumber\\
&+\epsilon \mathrm{P}^{\bot}_{N_{n}}\mathrm{P}_{N_{n+1}}\Pi_{W}\mathrm{d}_{\epsilon}\mathcal{F}(\epsilon,w_n)+\epsilon \mathrm{P}_{N_{n+1}}
(\Pi_{W}\mathrm{d}_{\epsilon}\mathcal{F}(\epsilon,w_{n+1})-\Pi_{W}\mathrm{d}_{\epsilon}\mathcal{F}(\epsilon,w_n)).\label{F6}
\end{align}
Furthermore Lemma \ref{lem1} and Lemma \ref{lem23} imply
\begin{align}
\|\Pi_{W}\mathrm{d}_{\epsilon}\mathcal{F}(\epsilon,w_n)\|_{s+\kappa}\leq& C(\kappa)\|w_n\|_{s+\kappa}(1+\|\partial_{\epsilon }w_n\|_{s})+C(\kappa)(1+\|\partial_{\epsilon} w_n\|_{s+\kappa}),\label{F19}\\
\|\Pi_{W}\mathrm{d}_{\epsilon}\mathcal{F}(\epsilon,w_{n+1})-\Pi_{W}\mathrm{d}_{\epsilon}\mathcal{F}(\epsilon,w_n)\|_{s}\leq& C(1+\|\partial_{\epsilon} w_n\|_{s})\|h_{n+1}\|_s,\nonumber\\
\|\Pi_{W}\partial_{\epsilon}\mathcal{F}(\epsilon,w_{n+1})-\Pi_{W}\partial_{\epsilon}\mathcal{F}(\epsilon,w_n)\|_{s+\kappa}\leq& C(\kappa)(\|w_n\|_{s+\kappa}\|h_{n+1}\|_{s}+\|h_{n+1}\|_{s+\kappa})(1+\|\partial_{\epsilon} w_n\|_{s})
\nonumber\\&+C(\kappa)(1+\|\partial_{\epsilon} w_n\|_{s+\kappa})\|h\|_{s}.\label{F21}
\end{align}
Then, it follows from  $(\mathrm{\bf P1})$, $(\mathrm{\bf U1})$--$(\mathrm{\bf U2})$ and $(\mathrm{S1})_{n}$ that for $\frac{\epsilon}{\gamma^3\omega}\leq\delta_4$  small enough,
\begin{align}\label{F16}
\|\partial_{\omega}\mathscr{U}_{n+1}(\epsilon,\omega,h_{n+1})\|_{s}\stackrel{\eqref{F5}}{\leq}&
{\epsilon K'}{\gamma^{-1}}N^{\tau+1}_{n+1}N^{-\kappa}_{n}B_{n}+\epsilon K'N^{-\kappa}_{n}B'_{n},\\
\|\partial_{\epsilon}\mathscr{U}_{n+1}(\epsilon,\omega,h_{n+1})\|_{s}\stackrel{\eqref{F5}}{\leq}&
{ K'}N^{\tau-1}_{n+1}N^{-\kappa}_{n}B_{n}+\epsilon K'N^{-\kappa}_{n}B''_{n},\label{F22}
\end{align}
where $B_n,B'_n,B''_n$ are given in $(\mathrm{S4})_n$. Combining above estimates with \eqref{F14}--\eqref{F15}, $(\mathrm{S4})_n$ yields
\begin{align*}
\|\partial_{\omega} h_{n+1}\|_{s}\stackrel{\eqref{C6}}{\leq}\frac{ K_{3}\epsilon}{\gamma^2\omega}N_{n+1}^{-1},\quad\|\partial_{\epsilon} h_{n+1}\|_{s}\stackrel{\eqref{C6}}{\leq}\frac{ K_{3}}{\gamma\omega}N_{n+1}^{-1}.
\end{align*}
This ends the  proof of the lemma.
\end{proof}
Thus we complete the proof of properties  $(\mathrm{S1})_{n+1}$--$(\mathrm{S3})_{n+1}$. Now we are devoted to  giving the upper bounds of ${h}_{n+1},\partial_{\omega}{h}_{n+1}$ in $(s+\kappa)$-norm, i.e., that $(\mathrm{S4})_{n+1}$ holds.

\begin{lemm}\label{lem13}
For $(\epsilon,\omega)\in A_{n+1}$ and $\frac{\epsilon}{\gamma^3\omega}\leq\delta_4$, the first term in $(\mathrm{S4})_{n+1}$ in Theorem \ref{theo2} holds.
\end{lemm}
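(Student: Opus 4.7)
The plan is to decompose $w_{n+1}=w_n+h_{n+1}$, so that $B_{n+1}\leq B_n+\|h_{n+1}\|_{s+\kappa}$, and then to estimate $\|h_{n+1}\|_{s+\kappa}$ by applying the \emph{tame} inversion bound from Lemma~\ref{lem2} at regularity $s'=s+\kappa$ to the fixed-point identity
\[
h_{n+1}=\mathcal{L}^{-1}_{N_{n+1}}(\epsilon,\omega,w_n)\bigl(R_n(h_{n+1})+r_n\bigr).
\]
This yields
\[
\|h_{n+1}\|_{s+\kappa}\leq \frac{K(s+\kappa)}{\gamma\omega}\,N_{n+1}^{\tau-1}\Bigl(\|R_n(h_{n+1})+r_n\|_{s+\kappa}+\|w_n\|_{s+\kappa+\sigma}\,\|R_n(h_{n+1})+r_n\|_s\Bigr).
\]

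The high Sobolev norm $\|w_n\|_{s+\kappa+\sigma}$ is controlled by $(\mathrm{\bf P1})$: since $w_n\in W_{N_n}$, one has $\|w_n\|_{s+\kappa+\sigma}\leq N_n^{\sigma}\|w_n\|_{s+\kappa}\leq N_n^{\sigma}B_n$. For the remainders, I would invoke the Taylor-tame estimate $(\mathrm{\bf U3})$ to get $\|R_n(h_{n+1})\|_{s+\kappa}\leq\epsilon C(\kappa)(\|w_n\|_{s+\kappa}\|h_{n+1}\|_s^2+\|h_{n+1}\|_s\|h_{n+1}\|_{s+\kappa})$ and the tame estimate $(\mathrm{\bf U2})$ to get $\|r_n\|_{s+\kappa}\leq \epsilon C(\kappa)B_n$; the complementary low-regularity bounds $\|R_n(h_{n+1})\|_s\leq \epsilon C\|h_{n+1}\|_s^2$ and $\|r_n\|_s\leq \epsilon C(\kappa)N_n^{-\kappa}B_n$ are already available from the proof of Lemma~\ref{lem11}.

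The self-referential contribution proportional to $\|h_{n+1}\|_{s+\kappa}$ on the right-hand side is absorbed into the left using $\|h_{n+1}\|_s\leq\rho_{n+1}=\frac{\epsilon K_2}{\gamma\omega}N_{n+1}^{-\sigma-1}$ from Lemma~\ref{lem11}: the coefficient of $\|h_{n+1}\|_{s+\kappa}$ on the right is bounded by $\frac{\epsilon}{\gamma\omega}N_{n+1}^{\tau-1}\|h_{n+1}\|_s\lesssim\frac{\epsilon^2}{\gamma^2\omega^2}N_{n+1}^{\tau-\sigma-2}\leq\tfrac12$ once $\frac{\epsilon}{\gamma^3\omega}\leq\delta_4$ is sufficiently small (noting $\sigma>\tau-1$ and $\tau<2$). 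After absorption, the dominant contributions are
\[
\frac{\epsilon}{\gamma\omega}N_{n+1}^{\tau-1}B_n\qquad\text{and}\qquad \frac{\epsilon}{\gamma\omega}N_{n+1}^{\tau-1}N_n^{\sigma-\kappa}B_n^2.
\]
Substituting the inductive bound $B_n\leq C_1\bar K N_{n+1}^{\tau-1+\sigma}$ from $(\mathrm{S4})_n$ and using the super-exponential doubling $N_{n+2}\asymp N_{n+1}^2$ dictated by \eqref{D9}, together with $\kappa=6\tau+4\sigma+2\gg\sigma$, each of these contributions is bounded by $\tfrac12 C_1\bar K N_{n+2}^{\tau-1+\sigma}$; summing with $B_n$ and choosing $C_1,\bar K$ large enough closes the induction $B_{n+1}\leq C_1\bar K N_{n+2}^{\tau-1+\sigma}$.

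The main obstacle is the competition between the loss factor $N_{n+1}^{\tau-1}$ from inverting $\mathcal{L}_{N_{n+1}}$ and the extra factor $N_n^{\sigma}$ introduced by the smoothing bound on $\|w_n\|_{s+\kappa+\sigma}$, against the gain $N_n^{-\kappa}$ supplied by the high-frequency projection in $r_n$ and the doubling $N_{n+2}\approx N_{n+1}^2$. The prescribed value $\kappa=6\tau+4\sigma+2$ in \eqref{C6} is tuned precisely to win this race, with enough slack left over to absorb the analogous $N$-powers that will appear in the parallel estimates on the derivative norms $B'_{n+1},B''_{n+1}$.
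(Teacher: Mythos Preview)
Your approach is essentially the paper's: the same decomposition $B_{n+1}\le B_n+\|h_{n+1}\|_{s+\kappa}$, the same tame inversion bound \eqref{F3} at $s'=s+\kappa$, the same control of $\|w_n\|_{s+\kappa+\sigma}$ via $(\mathrm{\bf P1})$, the same tame estimates for $R_n$ and $r_n$, and the same absorption of the self-referential term using $\rho_{n+1}$.

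The one point of divergence is how the induction is closed. The paper does \emph{not} substitute the inductive bound on $B_n$: it simply absorbs the small prefactor $\tfrac{2\epsilon K'(\kappa)}{\gamma\omega}\le 1$ to get $\|h_{n+1}\|_{s+\kappa}\le N_{n+1}^{\tau-1+\sigma}B_n$, hence the clean \emph{linear} recursion $B_{n+1}\le (1+N_{n+1}^{\tau-1+\sigma})B_n$, and then bounds the telescoping product $\prod_k(1+N_k^{\tau-1+\sigma})$ by a constant times $N_{n+2}^{\tau-1+\sigma}$ using $N_{k}^2\le 2N_{k+1}$. Your route instead plugs in $B_n\le C_1\bar K N_{n+1}^{\tau-1+\sigma}$ and appeals to the doubling to bound each piece by $\tfrac12 C_1\bar K N_{n+2}^{\tau-1+\sigma}$. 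This works, but your closing sentence ``choosing $C_1,\bar K$ large enough'' is off: enlarging $C_1$ does not help, since it multiplies both sides of the inductive inequality. What actually closes your argument is (i) the smallness of $\frac{\epsilon}{\gamma\omega}$ in front of the dominant contributions, and (ii) the fact that $B_n\le C_1\bar K N_{n+1}^{\tau-1+\sigma}\approx C_1\bar K N_{n+2}^{(\tau-1+\sigma)/2}\ll C_1\bar K N_{n+2}^{\tau-1+\sigma}$ once $N_0$ is large. If you rephrase that last step accordingly, your argument is complete and equivalent to the paper's; the paper's multiplicative recursion is just a tidier way to package the same arithmetic.
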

\begin{proof}
First of all, for $\frac{\epsilon}{\gamma^3\omega}\leq\delta_4$  small enough, we claim
\begin{align}\label{cl2.29}
B_{n+1}\leq (1+N^{\tau-1+\sigma}_{n+1})B_{n}.
\end{align}
Moreover it follows from \eqref{D9} that
\begin{align*}
N^{2}_{n+1}\leq e^{\mathfrak{c}2^{n+2}}<N_{n+2}+1\leq 2N_{n+2}.
\end{align*}
Then \eqref{cl2.29} implies
\begin{align*}
B_{n+1}{\leq}&B_0\prod^{n+1}_{k=1}(1+N^{\tau-1+\sigma}_{k+1})\leq B_0\prod^{n+1}_{k=1}(1+e^{\mathfrak{c}2^k(\tau-1+\sigma)}) \\
\leq&\prod^{+\infty}_{k=1}(1+e^{-\mathfrak{c}2^k(\tau-1+\sigma)})B_0e^{\mathfrak{c}2^{n+2}(\tau-1+\sigma)}\\
\leq& 2^{\tau-1+\sigma}\prod^{+\infty}_{k=1}(1+e^{-\mathfrak{c}2^k(\tau-1+\sigma)})B_0N_{n+2}^{\tau-1+\sigma},
\end{align*}
which shows the first term in $(\mathrm{S4})_{n+1}$ by \eqref{E3}. Now let us prove above claim \eqref{cl2.29}. The definition of $B_{n+1}$ shows
\begin{align}\label{F11}
B_{n+1}\leq1+\|w_{n}\|_{s+\kappa}+\|h_{n+1}\|_{s+\kappa}=B_{n}+\|h_{n+1}\|_{s+\kappa}.
\end{align}
This implies that  we have to  give the upper bound of $\|h_{n+1}\|_{s+\kappa}$. It follows from Lemma \ref{lem11} and $(\mathrm{\bf U2})$--$(\mathrm{\bf U3})$ that
\begin{align*}
&\|r_{n}\|_{s}{\leq}\epsilon C,\quad\|R_{n}(h_{n+1})\|_{s}{\leq}\epsilon C\rho^2_{n+1},\quad\|r_{n}\|_{s+\kappa}\leq \epsilon C(\kappa)B_{n},\\
&\|R_{n}(h_{n+1})\|_{s+\kappa}{\leq}
\epsilon C(\kappa)(\rho^2_{n+1}B_{n}+\rho_{n+1}\|h_{n+1}\|_{s+\kappa}).
\end{align*}
Hence, using the equality $h_{n+1}=\mathcal{L}^{-1}_{N_{n+1}}(\epsilon,\omega,w_{n})(R_{n}(h_{n+1})+r_{n})$, we have
\begin{align*}
\|h_{n+1}\|_{s+\kappa}\stackrel{(\mathrm{\bf P1}),\eqref{F3}}{\leq}\frac{\epsilon K'(\kappa)}{ \gamma\omega}N^{\tau-1+\sigma}_{n+1}B_{n}+\frac{\epsilon K'(\kappa)} {\gamma\omega}N^{\tau-1}_{n+1}\rho_{n+1}\|h_{n+1}\|_{s+\kappa}.
\end{align*}
According to \eqref{F8}, one has that for $\frac{\epsilon}{\gamma\omega}\leq\frac{\epsilon}{\gamma^3\omega}\leq\delta_4$ small enough,
\begin{align}\label{F17}
\|h_{n+1}\|_{s+\kappa}\leq\frac{2\epsilon K'(\kappa)}{ \gamma\omega}N^{\tau-1+\sigma}_{n+1}B_{n}\leq N^{\tau-1+\sigma}_{n+1}B_{n}.
\end{align}
Obviously,  \eqref{cl2.29} follows directly from \eqref{F11}--\eqref{F17}.

\end{proof}
Let us show the upper bound of $\mathcal{L}^{-1}_{N_{n+1}}(\epsilon,\omega,w_{n+1})\mathfrak{w}$ (recall \eqref{F13}) in $(s+\kappa)$-norm.
\begin{lemm}\label{lem15}
For $(\epsilon,\omega)\in A_{n+1}$ and $\frac{\epsilon}{\gamma^3\omega}\leq\delta_4$,  one has that for all $\mathfrak{w}\in W_{N_{n+1}}$,
\begin{align*}
\|\mathcal{L}^{-1}_{N_{n+1}}(\epsilon,\omega,w_{n+1})\mathfrak{w}\|_{s+\kappa}\leq\frac{K_4}{\gamma\omega}N^{\tau-1}_{n+1}\|\mathfrak{w}\|_{s+\kappa}
+\frac{K_4}{\gamma\omega}N^{2\tau-2}_{n+1}(\|w_{n}\|_{s+\kappa+\sigma}+\|h_{n+1}\|_{s+\kappa})\|\mathfrak{w}\|_{s}
\end{align*}
for some constant $K_4>0$.
\end{lemm}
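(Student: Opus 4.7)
My plan is to start from the factorization \eqref{F13}:
$\mathcal{L}_{N_{n+1}}(\epsilon,\omega,w_{n+1})=\mathcal{L}_{N_{n+1}}(\epsilon,\omega,w_n)(\mathrm{Id}-\mathcal{D})$, where $\mathcal{D}:=\mathrm{D}_h\mathcal{U}_{n+1}(h_{n+1})$, and to write $v:=\mathcal{L}^{-1}_{N_{n+1}}(\epsilon,\omega,w_{n+1})\mathfrak{w}=(\mathrm{Id}-\mathcal{D})^{-1}u$ with $u:=\mathcal{L}^{-1}_{N_{n+1}}(\epsilon,\omega,w_n)\mathfrak{w}$, then estimate the high-norm correction $v-u=\mathcal{D}v$ via a tame bound on $\mathcal{D}$. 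Applying Lemma \ref{lem2} at level $s'=s+\kappa$ immediately yields
$\|u\|_{s+\kappa}\leq \tfrac{K(s+\kappa)}{\gamma\omega}N_{n+1}^{\tau-1}(\|\mathfrak{w}\|_{s+\kappa}+\|w_n\|_{s+\kappa+\sigma}\|\mathfrak{w}\|_s)$ and $\|u\|_s\leq\tfrac{K}{\gamma\omega}N_{n+1}^{\tau-1}\|\mathfrak{w}\|_s$, which will account for the first summand in the claim.

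Next I would derive a tame bound on $\mathcal{D}$ at level $s+\kappa$. Combining \eqref{F9} with the Taylor identity
$(\mathrm{D}_w\mathcal{F}(\epsilon,w_{n+1})-\mathrm{D}_w\mathcal{F}(\epsilon,w_n))v=\int_0^1 \mathrm{D}_w^2\mathcal{F}(\epsilon,w_n+th_{n+1})[h_{n+1},v]\,dt$, and applying property $(\mathbf{U2})$ to $\mathrm{D}_w^2\mathcal{F}$ at level $s+\kappa$ with $h=h_{n+1}$, $\mathrm{h}=v$, one obtains (using $\|h_{n+1}\|_s\leq\rho_{n+1}$ and $\|w_n\|_{s+\kappa}\leq B_n$)
$\|(\mathrm{D}_w\mathcal{F}(\epsilon,w_{n+1})-\mathrm{D}_w\mathcal{F}(\epsilon,w_n))v\|_{s+\kappa}\leq C(B_n\rho_{n+1}\|v\|_s+\|h_{n+1}\|_{s+\kappa}\|v\|_s+\rho_{n+1}\|v\|_{s+\kappa})$, with the analogous $\lesssim \rho_{n+1}\|v\|_s$ at level $s$. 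Feeding these into Lemma \ref{lem2} for $\mathcal{L}^{-1}_{N_{n+1}}(\epsilon,\omega,w_n)$ and using $B_n\leq 1+\|w_n\|_{s+\kappa+\sigma}$ gives
$\|\mathcal{D}v\|_{s+\kappa}\leq \tfrac{\epsilon K'}{\gamma\omega}N_{n+1}^{\tau-1}\rho_{n+1}\|v\|_{s+\kappa}+\tfrac{\epsilon K'}{\gamma\omega}N_{n+1}^{\tau-1}(\|h_{n+1}\|_{s+\kappa}+\|w_n\|_{s+\kappa+\sigma})\|v\|_s$. Since $\rho_{n+1}=\tfrac{\epsilon K_2}{\gamma\omega}N_{n+1}^{-\sigma-1}$ by Lemma \ref{lem11} and $\tau-2-\sigma<0$, the $\|v\|_{s+\kappa}$ coefficient is a small multiple of $(\epsilon/(\gamma\omega))^2$, hence $\leq 1/2$ for $\epsilon/(\gamma^3\omega)\leq\delta_4$ small.

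From $v=u+\mathcal{D}v$ I would absorb the $\tfrac12\|v\|_{s+\kappa}$ piece on the left to get
$\|v\|_{s+\kappa}\leq 2\|u\|_{s+\kappa}+\tfrac{2\epsilon K'}{\gamma\omega}N_{n+1}^{\tau-1}(\|h_{n+1}\|_{s+\kappa}+\|w_n\|_{s+\kappa+\sigma})\|v\|_s$. The low-norm contraction $\|\mathcal{D}\|_{s\to s}\leq 1/2$ established in \eqref{F10} gives $\|v\|_s\leq 2\|u\|_s\leq \tfrac{2K}{\gamma\omega}N_{n+1}^{\tau-1}\|\mathfrak{w}\|_s$; substituting converts the cross term into $\tfrac{K_4}{\gamma\omega}N_{n+1}^{2\tau-2}(\|h_{n+1}\|_{s+\kappa}+\|w_n\|_{s+\kappa+\sigma})\|\mathfrak{w}\|_s$ by uniform boundedness of $\epsilon/(\gamma\omega)$. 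The $\|u\|_{s+\kappa}$ bound supplies $\tfrac{K_4}{\gamma\omega}N_{n+1}^{\tau-1}\|\mathfrak{w}\|_{s+\kappa}$ plus a $\tfrac{K_4}{\gamma\omega}N_{n+1}^{\tau-1}\|w_n\|_{s+\kappa+\sigma}\|\mathfrak{w}\|_s$ contribution, which is absorbed into the $N_{n+1}^{2\tau-2}$ term since $\tau\geq 1$; this yields the claim.

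The main obstacle is verifying that $\mathcal{D}$ admits the above high-norm tame bound with an \emph{absorbable} $\|v\|_{s+\kappa}$ term. This rests on the sharp pairing between the $N_{n+1}^{\tau-1}$ loss of the linearized inverse and the smallness $\rho_{n+1}=O(N_{n+1}^{-\sigma-1})$ of $h_{n+1}$ from Lemma \ref{lem11}, together with the quadratic smallness $(\epsilon/(\gamma\omega))^2$; once this absorption works, the remaining work is a routine application of $(\mathbf{U2})$ and the tame inverse estimate of Lemma \ref{lem2}.
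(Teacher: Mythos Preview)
Your proposal is correct and follows essentially the same route as the paper: both use the factorization \eqref{F13}, derive a tame $(s+\kappa)$-bound for $\mathcal{D}=\mathrm{D}_h\mathcal{U}_{n+1}(h_{n+1})$ via $(\mathbf{U2})$ and \eqref{F3}, absorb the $\frac{\epsilon}{\gamma\omega}N_{n+1}^{\tau-1}\rho_{n+1}\|\cdot\|_{s+\kappa}$ term using \eqref{F8}, and then combine with the tame estimate of Lemma~\ref{lem2} for $\mathcal{L}^{-1}_{N_{n+1}}(\epsilon,\omega,w_n)$. The only cosmetic difference is the order of composition: the paper first proves the tame bound for $(\mathrm{Id}-\mathcal{D})^{-1}$ on a generic input and then invokes \eqref{F3}, whereas you first set $u=\mathcal{L}^{-1}_{N_{n+1}}(\epsilon,\omega,w_n)\mathfrak{w}$ and then control $(\mathrm{Id}-\mathcal{D})^{-1}u$; the estimates are identical.
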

\begin{proof}
Set $\mathscr{L}(h_{n+1}):=(\mathrm{Id}-\mathrm{D}_{h}\mathcal{U}_{n+1}(h_{n+1}))^{-1}\mathfrak{w}$. It is straightforward  that
\begin{align*}
\mathscr{L}(h_{n+1})=\mathfrak{w}+\mathrm{D}_{h}U_{n+1}(h_{n+1})\mathscr{L}(h_{n+1}),\quad
\|\mathscr{L}(h_{n+1})\|_s\stackrel{\eqref{F10}}\leq 2\|\mathfrak{w}\|_{s}.
\end{align*}
With the help of \eqref{F9}, \eqref{F3} and property $(\mathrm{\bf U2})$, we derive
\begin{align*}
\|\mathrm{D}_{h}\mathcal{U}_{n+1}(h_{n+1})\|_{s+\kappa}
\leq\frac{\epsilon K'(\kappa)}{ \gamma\omega}N^{\tau-1}_{n+1}(\|w_{n}\|_{s+\kappa+\sigma}\|h_{n+1}\|_{s}+\|h_{n+1}\|_{s+\kappa}),
\end{align*}
which leads to
\begin{align*}
\|\mathscr{L}(h_{n+1})\|_{s+\kappa}
{\leq}&\|\mathfrak{w}\|_{s+\kappa}+\frac{\epsilon K'(\kappa)}{\gamma\omega} N^{\tau-1}_{n+1}(\|w_{n}\|_{s+\kappa+\sigma}\|h_{n+1}\|_{s}+\|h_{n+1}\|_{s+\kappa})\|\mathfrak{w}\|_{s}\\
&+\frac{\epsilon K'(\kappa)}{ \gamma\omega}N^{\tau-1}_{n+1}\rho_{n+1}\|\mathscr{L}(h_{n+1})\|_{s+\kappa}.
\end{align*}
For $\frac{\epsilon}{\gamma\omega}\leq\frac{\epsilon}{\gamma^3\omega}\leq\delta_4$ small enough, by means of \eqref{F8}, one has
\begin{align*}
\|\mathscr{L}(h_{n+1})\|_{s+\kappa}\leq2\|\mathfrak{w}\|_{s+\kappa}+\frac{2\epsilon K'(\kappa)}{\gamma\omega} N^{\tau-1}_{n+1}(\|w_{n}\|_{s+\kappa+\sigma}\|h_{n+1}\|_{s}+\|h_{n+1}\|_{s+\kappa})\|\mathfrak{w}\|_{s}.
\end{align*}
Hence we get the conclusion  of this  lemma according to \eqref{F3}.
\end{proof}

\begin{lemm}\label{lem14}
For $(\epsilon,\omega)\in A_{n+1}$ and $\frac{\epsilon}{\gamma^3\omega}\leq\delta_4$,
the last two terms in $(\mathrm{S4})_{n+1}$ in Theorem \ref{theo2} hold.
\end{lemm}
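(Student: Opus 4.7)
The plan is to mirror the argument of Lemma \ref{lem13}: establish one-step multiplicative growth estimates of the form
\[
B'_{n+1}\le (1+N^{a}_{n+1})B'_n,\qquad B''_{n+1}\le (1+N^{a}_{n+1})B''_n,
\]
for some exponent $a$ that, after invoking $N^{2}_{n+1}\le 2N_{n+2}$ and taking the infinite product $\prod_{k\ge1}(1+e^{-\mathfrak c2^k a})<\infty$, collapses to the claimed exponent $3\tau+2\sigma-1$ on $N_{k+1}$. Starting from $w_{n+1}=w_n+h_{n+1}$, the triangle inequality yields
\[
B'_{n+1}\le B'_n+\|\partial_\omega h_{n+1}\|_{s+\kappa},\qquad B''_{n+1}\le B''_n+\|\partial_\epsilon h_{n+1}\|_{s+\kappa},
\]
so the entire task reduces to a tame estimate for $\|\partial_{\omega,\epsilon} h_{n+1}\|_{s+\kappa}$.

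To bound $\|\partial_\omega h_{n+1}\|_{s+\kappa}$, I would insert \eqref{F15} into the higher-norm estimate for $\mathcal{L}^{-1}_{N_{n+1}}(\epsilon,\omega,w_{n+1})$ provided by Lemma \ref{lem15}. This produces two contributions: a ``main'' term of size $\frac{K_4}{\gamma\omega}N^{\tau-1}_{n+1}\|\partial_\omega\mathscr U_{n+1}\|_{s+\kappa}$ and a ``loss'' term of size $\frac{K_4}{\gamma\omega}N^{2\tau-2}_{n+1}(\|w_n\|_{s+\kappa+\sigma}+\|h_{n+1}\|_{s+\kappa})\|\partial_\omega\mathscr U_{n+1}\|_{s}$. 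The factor $\|w_n\|_{s+\kappa+\sigma}$ is controlled by $B_n$ (already bounded by Lemma \ref{lem13}, after noting the definition of $\kappa$ in \eqref{C6}), and $\|h_{n+1}\|_{s+\kappa}$ is bounded by \eqref{F17}. Next I apply the tame estimates $(\mathrm{\bf U2})$--$(\mathrm{\bf U3})$ and the smoothing $(\mathrm{\bf P1})$--$(\mathrm{\bf P2})$ to each of the three pieces of $\partial_\omega\mathscr U_{n+1}$ in \eqref{F12}: the inertial term $2\omega\rho(x)(h_{n+1})_{tt}$ costs a factor $N^{2}_{n+1}$ from two $t$-derivatives and is multiplied by $\|h_{n+1}\|_{s+\kappa}\le N^{\tau-1+\sigma}_{n+1}B_n$; the smoothed projector term $\epsilon \mathrm P^\bot_{N_n}\mathrm P_{N_{n+1}}\Pi_W\mathrm D_w\mathcal F(\epsilon,w_n)\partial_\omega w_n$ uses the $N^{-\kappa}_n$ smoothing against $B_n$ and $B'_n$; and the difference $\epsilon\mathrm P_{N_{n+1}}(\Pi_W\mathrm D_w\mathcal F(\epsilon,w_{n+1})-\Pi_W\mathrm D_w\mathcal F(\epsilon,w_n))\partial_\omega w_n$ is handled by the Taylor-tame property $(\mathrm{\bf U3})$ applied along the segment from $w_n$ to $w_{n+1}$, again invoking the bound on $\|h_{n+1}\|_{s+\kappa}$. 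After absorbing all numerical factors by $\frac{\epsilon}{\gamma^{3}\omega}\le\delta_4$, these estimates combine to give
\[
\|\partial_\omega h_{n+1}\|_{s+\kappa}\le N^{3\tau+2\sigma-1}_{n+1}B'_n,
\]
which is precisely the multiplicative form sought; the treatment of $\partial_\epsilon h_{n+1}$ is strictly parallel, using \eqref{F6} together with \eqref{F19}--\eqref{F21}, producing the analogous bound with the additional factor $1/\omega$ inherited from $\partial_\epsilon \mathscr U_{n+1}$.

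Finally, iterating $B'_{n+1}\le(1+N^{3\tau+2\sigma-1}_{n+1})B'_n$ exactly as in the telescoping product at the end of Lemma \ref{lem13} and applying the inequality $N^{2}_{n+1}\le 2N_{n+2}$ yields $B'_{n+1}\le C_2\bar K\gamma^{-1}N^{3\tau+2\sigma-1}_{n+2}$ with $C_2=C_2(\mathfrak c,\tau,\sigma)$, and the analogous telescoping delivers $B''_{n+1}\le C_3\bar K(\gamma\omega)^{-1}N^{3\tau+2\sigma-1}_{n+2}$; the base case $n=0$ is provided by \eqref{E5}. The main obstacle I expect is the bookkeeping of powers of $N_{n+1}$ across the three summands of $\partial_\omega\mathscr U_{n+1}$: one must balance the gain $N^{-\kappa}_n$ from the Fourier projectors against the losses $N^{\tau-1}_{n+1}$, $N^{2\tau-2}_{n+1}$, $N^{\sigma}_{n+1}$ coming from the inverse operator and the smoothing of $w_n$ to $s+\kappa+\sigma$, so that the dominant exponent is exactly $3\tau+2\sigma-1$ and all lower-order remainders can be absorbed by the smallness of $\epsilon/(\gamma^{3}\omega)$.
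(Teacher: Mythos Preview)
Your overall strategy---use \eqref{F15}, push the bound through Lemma~\ref{lem15}, and estimate the three pieces of $\partial_\omega\mathscr U_{n+1}$ in \eqref{F12} by $(\mathrm{\bf U2})$--$(\mathrm{\bf U3})$, $(\mathrm{\bf P1})$--$(\mathrm{\bf P2})$ and \eqref{F17}---is exactly what the paper does. The gap is in the \emph{form} of the recursion you claim to obtain.

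You assert that everything collapses to the purely multiplicative inequality
\[
\|\partial_\omega h_{n+1}\|_{s+\kappa}\le N^{3\tau+2\sigma-1}_{n+1}B'_n,
\qquad
B'_{n+1}\le (1+N^{3\tau+2\sigma-1}_{n+1})B'_n,
\]
after ``absorbing all numerical factors by $\frac{\epsilon}{\gamma^{3}\omega}\le\delta_4$''. But the inertial piece $2\omega\rho(h_{n+1})_{tt}$ does \emph{not} produce a term controllable by $B'_n$. Using $\|h_{n+1}\|_{s+\kappa}\le N^{\tau-1+\sigma}_{n+1}B_n$ (the second half of \eqref{F17}) and the inverse bound from Lemma~\ref{lem15} gives a contribution of size $\tfrac{K'}{\gamma}N^{2\tau+\sigma}_{n+1}B_n$, which carries \emph{no} $\epsilon/\omega$ factor and involves $B_n$, not $B'_n$. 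There is no a~priori inequality $B_n\le C\,B'_n$, so you cannot fold this into a multiplicative factor times $B'_n$; and $B'_n\ge 1$ is far too weak to dominate $\bar K/\gamma$ after you invoke the already-known bound $B_n\le C_1\bar K N^{\tau-1+\sigma}_{n+1}$.

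What actually comes out (and what the paper proves) is the \emph{inhomogeneous} recursion
\[
B'_{n+1}\le(1+N^{\tau-1}_{n+1})B'_n+\tfrac{K'}{\gamma}N^{2\tau+\sigma}_{n+1}B_n,
\qquad
B''_{n+1}\le(1+N^{\tau-1}_{n+1})B''_n+\tfrac{K'}{\gamma\omega}N^{2\tau+\sigma}_{n+1}B_n,
\]
where only the $B'_n$ (resp.\ $B''_n$) term gets the small prefactor $\tfrac{\epsilon K'}{\gamma\omega}N^{\tau-1}_{n+1}$ absorbed into $(1+N^{\tau-1}_{n+1})$. One must then solve this affine recursion explicitly: iterate it to write $B'_{n+1}\le \mathcal S_1+\mathcal S_2$ with $\mathcal S_1=B'_0\prod_{k=1}^{n+1}(1+N^{\tau-1}_k)$ and $\mathcal S_2$ the accumulated forcing, plug in the bound $B_{n+1-k}\le C_1\bar K N^{\tau-1+\sigma}_{n+2-k}$ from $(\mathrm{S4})_n$/Lemma~\ref{lem13}, and sum the resulting double-exponential series. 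It is precisely this additive $B_n$--forcing, after being weighted by $N^{2\tau+\sigma}$ and $N^{\tau-1+\sigma}$, that produces the exponent $3\tau+2\sigma-1$; the telescoping product alone (your proposed last step) would give only $N^{\tau-1}_{n+2}$ on $\mathcal S_1$ and would miss the dominant contribution entirely.
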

\begin{proof}
First of all, for $\frac{\epsilon}{\gamma^3\omega}\leq\delta_4$  small enough, let us check
\begin{align}\label{F23}
&B'_{n+1}\leq(1+N^{\tau-1}_{n+1})B'_{n}+{K'}{\gamma^{-1}}N^{2\tau+\sigma}_{n+1}B_{n},\quad B''_{n+1}\leq(1+N^{\tau-1}_{n+1})B''_{n}+{K'}{(\gamma\omega)^{-1}}N^{2\tau+\sigma}_{n+1}B_{n}.
\end{align}
We only show the upper bound of $B'_n$, while   the upper bound of $B''_n$ can be proved in a  similar manner as employed on the one of $B'_n$.  Denote  $\alpha_1=\tau-1$, $\alpha_2=2\tau+\sigma$, $\alpha_3=\tau-1+\sigma$. The first formula above leads to
\begin{align*}
B'_{n}\leq \mathcal{S}_1+\mathcal{S}_2,\quad\text{with}\quad\mathcal{S}_1=B'_{0}\prod^{n+1}_{k=1}\left(1+N^{\alpha_1}_{k}\right),
\quad\mathcal{S}_2=\sum^{n+1}_{k=1}\mathcal{S}_{2,k},
\end{align*}
where $\mathcal{S}_{2,1}=\frac{K'}{\gamma}N^{\alpha_2}_{n+1}B_{n} $  and
\begin{align*}
\mathcal{S}_{2,k}=\frac{K'}{\gamma}\left(\prod_{j=2}^k\left(1+N^{\alpha_1}_{n+1-(j-2)}\right)\right)N^{\alpha_2}_{n+1-(k-1)}B_{n+1-k}, \quad \forall2\leq k\leq n+1.
\end{align*}
By proceeding as the proof of Lemma \ref{lem13} on  the upper bound on $B_n$, we obtain
 \[\mathcal{S}_1\leq C(\mathfrak{c},\tau,\sigma)B'_{0}N^{\alpha_1}_{n+2}.\]
It follows from the first term in $\mathrm{(S4)}_{n}$ that
\begin{equation*}
\mathcal{S}_{2,1}\leq{K''\gamma^{-1}B_0}e^{\alpha_2\mathfrak{c}2^{n+1}}e^{\alpha_3\mathfrak{c}2^{n+1}}
=K''\gamma^{-1}B_0e^{(\alpha_2+\alpha_3)\mathfrak{c}2^{n+1}}\leq C_1'\gamma^{-1}B_0N^{\alpha_2+\alpha_3}_{n+2}.
\end{equation*}
On the other hand, one has
\begin{align*}
\sum\limits_{k=2}^{n+1} \mathcal{S}_{2,k}&\leq K''\gamma^{-1}B_0\sum\limits_{k=2}^{n+1}e^{\alpha_1\mathfrak{c}{(2^{n+2}-2^{n+3-k})}}
e^{\alpha_2\mathfrak{c}2^{n+2-k}}e^{{\alpha_3}\mathfrak{c}{2^{n+2-k}}}\\
&\leq K''\gamma^{-1}B_0e^{\alpha_1\mathfrak{c}{2^{n+2}}}
\sum\limits_{k=2}^{n+1}e^{{(-\alpha_1+\alpha_2+\alpha_3)}\mathfrak{c}{2^{n+3-k}}}\\
&\leq K''\gamma^{-1}B_0e^{(\alpha_2+\alpha_3)\mathfrak{c}{2^{n+2}}}\\
&\leq C_1'\gamma^{-1}B_0N^{\alpha_2+\alpha_3}_{n+2}.
\end{align*}
Thus formulae \eqref{E3}--\eqref{E5} reads the upper bound of $B'_n$.

We now are devoted to verifying \eqref{F23}. It follows from the definition of $B'_{n},B''_{n}$ that
\begin{align}\label{F18}
B'_{n+1},B''_{n+1}\leq1+\|\partial_{\omega,\epsilon}w_{n}\|_{s+\kappa}+\|\partial_{\omega,\epsilon}h_{n+1}\|_{s+\kappa}.
\end{align}
Thus  we give the upper bound of $\partial_{\omega,\epsilon}h_{n+1}$ in $(s+\kappa)$-norm.

By formula \eqref{F15} and Lemma \ref{lem15}, we may obtain
\begin{align*}
\|\partial_{\omega,\epsilon}h_{n+1}\|_{s+\kappa}\leq&\frac{K_4}{\gamma\omega}N^{\tau-1}_{n+1}\|\partial_{\omega,\epsilon}\mathscr{U}_{n+1}(\epsilon,\omega,h_{n+1})\|_{s+\kappa}\nonumber\\
&+\frac{K_4}{\gamma\omega}N^{2\tau-2}_{n+1}(\|w_{n}\|_{s+\kappa+\sigma}
+\|h_{n+1}\|_{s+\kappa})\|\partial_{\omega,\epsilon}\mathscr{U}_{n+1}(\epsilon,\omega,h_{n+1})\|_{s}.
\end{align*}
Let us show the upper bound of $\partial_{\omega,\epsilon}\mathscr{U}_{n+1}(\epsilon,\omega,h_{n+1})$ in $(s+\kappa)$-norm. Then, for $\frac{\epsilon}{\gamma^3\omega}\leq\delta_4$  small enough, applying $(\mathrm{\bf U1})\text{--}(\mathrm{\bf U2}), (\mathrm{S1})_{n}$ and \eqref{F17} yields
\begin{align*}
\|\partial_{\omega}\mathscr{U}_{n+1}(\epsilon,\omega,h_{n+1})\|_{s+\kappa}\stackrel{\eqref{F12}}{\leq}& {C'(\kappa)}{\omega}N^{\tau+1+\sigma}_{n+1}B_{n}+\epsilon C'(\kappa)B'_{n},\\
\|\partial_{\epsilon}\mathscr{U}_{n+1}(\epsilon,\omega,h_{n+1})\|_{s+\kappa}\stackrel{\eqref{F6}\text{--}\eqref{F21}}{\leq} & {C'(\kappa)}N^{\tau-1+\sigma}_{n+1}B_{n}+\epsilon C'(\kappa)B''_{n}.
\end{align*}
Moreover, due to \eqref{F16}--\eqref{F22} and $(\mathrm{S4})_{n}$, we have
\begin{align*}
\|\partial_{\omega}\mathscr{U}_{n}(\epsilon,\omega,h_n)\|_{s}\stackrel{\eqref{C6}}{\leq}{\epsilon C'}{\gamma^{-1}},\quad\|\partial_{\epsilon}\mathscr{U}_{n+1}(\epsilon,\omega,h_{n+1})\|_{s}\stackrel{\eqref{C6}}{\leq}{C'}.
\end{align*}
Hence one has that for $\frac{\epsilon}{\gamma^3\omega}\leq\delta_4$  small enough,
\begin{align*}
\|\partial_{\omega}h_{n+1}\|_{s+\kappa}\leq \frac{K'}{\gamma}N^{2\tau+\sigma}_{n+1}B_{n}+\frac{\epsilon K'}{\gamma\omega}N^{\tau-1}_{n+1}B'_{n},\quad
\|\partial_{\epsilon}h_{n+1}\|_{s+\kappa}\leq \frac{K'}{\gamma\omega}N^{2\tau+\sigma}_{n+1}B_{n}+\frac{\epsilon K'}{\gamma\omega}N^{\tau-1}_{n+1}B''_{n},
\end{align*}
which gives rise to \eqref{F23} because of \eqref{F18}.
\end{proof}

\subsubsection{Whitney extension}\label{2.2.3}
Let us define
\begin{align}
&\hat{A}_n:=\left\{(\epsilon,\omega)\in A_n,\quad\mathrm{dist}((\epsilon,\omega),\partial A_n)>\frac{\gamma_0\gamma^4}{N^{\tau+1}_n}\right\}\label{G1}
\\
&\tilde{A}_n:=\left\{(\epsilon,\omega)\in A_n,\quad\mathrm{dist}((\epsilon,\omega),\partial A_n )>\frac{2\gamma_{0}\gamma^4}{N^{\tau+1}_n}\right\}\subset \hat{A}_n,\label{G3}
\end{align}
where $A_n$ is given in Theorem \ref{theo2}.

Define a $C^{\infty}$ cut-off function $\varphi_n:~A_0\rightarrow[0,1]$ as
\begin{align*}
\varphi_n:=
\begin{cases}
1\quad\quad\text{if}\quad(\omega,\epsilon)\in\tilde{A}_n,\\
0\quad\quad\text{if}\quad(\omega,\epsilon)\in\hat{A}_n,
\end{cases}
\quad \text{with }|\partial_{\omega,\epsilon}\varphi_n|\leq C{N^{\tau+1}_{n}}/{(\gamma_0\gamma^4)},
\end{align*}
where $A_0$ is defined by \eqref{D8}, $\gamma_{0}$ will be given in Lemma \ref{lem16}.
Then,
\[\tilde{h}_n:=\varphi_nh_n\in C^1(A_0\cap\{(\epsilon,\omega):{\epsilon}/{\omega}\leq\delta_4\gamma^3\};W_{N_n}).\]
 For $n\in\mathbb{N}^{+}$,
it follows from  \eqref{F4}--\eqref{F5}, \eqref{C6}, $(\mathrm{S4})_{n}$ and Lemma \ref{lem12}
 that
\begin{align*}
&\|\tilde{h}_{0}\|_{s}\stackrel{\text{Lemma }\ref{lem10}}{\leq}\frac{\tilde{C}\epsilon}{\gamma\omega},\quad
\|\partial_{\omega}\tilde{h}_{0}\|_{s}\stackrel{(\mathrm{S1})_0}\leq\frac{\tilde{C}(\gamma_0)\epsilon}{\gamma^{5}\omega},
\quad\|\partial_{\epsilon}\tilde{h}_{n}\|_{s}\stackrel{(\mathrm{S1})_0}\leq\frac{\tilde{C}(\gamma_0)}{\gamma^{5}\omega},\\
&\|\tilde{h}_{n}\|_{s}\leq\frac{\tilde{C}\epsilon}{\gamma\omega}N^{-\sigma-1}_{n},\quad
\|\partial_{\omega}\tilde{h}_{n}\|_{s}\leq\frac{\tilde{C}(\gamma_0)\epsilon}{\gamma^{5}\omega}N^{-1}_{n},
\quad\|\partial_{\epsilon}\tilde{h}_{n}\|_{s}\leq\frac{\tilde{C}(\gamma_0)}{\gamma^{5}\omega}N^{-1}_{n}.
\end{align*}
Moreover $\tilde{w}_n=\sum_{k=0}^{n}\tilde{h}_k$ is an extension of ${w}_n$ satisfying $\tilde{w}_n(\epsilon,\omega)={w}_n(\epsilon,\omega)$
 for all $(\epsilon,\omega)\in\tilde{A}_n\cap\{(\epsilon,\omega):{\epsilon}/{\omega}\leq\delta_4\gamma^3\}$.
Then $\tilde{w}(\epsilon,\omega)$ belongs to $C^1(A_0\cap\{(\epsilon,\omega):{\epsilon}/{\omega}\leq\delta_5\gamma^3\};W\cap\mathcal{H}^s)$ with $\delta_5\leq\delta_4$ and satisfies
\begin{align}\label{G6}
\|\tilde{w}\|_{s}\leq\frac{K\epsilon}{\gamma\omega}<r,
\quad\|\partial_{\omega}\tilde{w}\|_s\leq\frac{K(\gamma_0)\epsilon}{\gamma^5\omega},
\quad\|\partial_{\epsilon}\tilde{w}\|_s\leq\frac{K(\gamma_0)}{\gamma^5\omega}.
\end{align}
Since $N_{n}\leq e^{\mathfrak{c}2^{n}}<N_{n}+1<2N_{n}$,
formulae \eqref{F5} and \eqref{C6} give that for $n\geq1$,
\begin{align}
\|\tilde{w}-\tilde{w}_{n}\|_{s}&\leq\sum\limits_{k\geq n+1}\frac{\tilde{C}\epsilon}{\gamma\omega}N^{-\tau-\sigma-4}_{k}{\leq}\sum\limits_{k\geq n+1}\frac{\tilde{C}'\epsilon}{\gamma\omega}e^{-(\tau+\sigma+2)\mathfrak{c}2^{k}} \nonumber\\
&\leq\frac{\tilde{C}''\epsilon}{\gamma\omega}e^{-(\tau+\sigma+2)\mathfrak{c}2^{n}}
\leq\frac{\bar{C}\epsilon}{\gamma\omega}N^{-(\tau+\sigma+2)/2}_{n+1}.\label{G7}
\end{align}
Denote by $\lambda_{j}(\epsilon,\tilde{w})=\mu^2_j(\epsilon,\tilde{w}),j\in\mathbb{N}^{+}$ the eigenvalues of Euler-Bernoulli beam's problem
\begin{align*}
\begin{cases}
(py'')''-{\epsilon}\Pi_{V}f'(v(\epsilon,\tilde{w})+\tilde{w})y=\lambda \rho y,\\
y(0)=y(\pi)=y''(0)=y''(\pi)=0.
\end{cases}
\end{align*}
Moreover let us define
\begin{align}\label{G9}
B_{\gamma}:=\bigg\{&(\epsilon,\omega)\in(\epsilon_1,\epsilon_2)\times(2\gamma,+\infty):~\left|\omega l-\bar{\mu}_j\right|>\frac{2\gamma}{l^{\tau}},
~\forall l=1,\cdots,N_0,~\forall j\geq1,\nonumber\\
&\frac{\epsilon}{\omega}\leq\delta_7\gamma^5,~\left|\omega l-{j}\right|>\frac{2\gamma}{l^{\tau}},
\left|\omega l-\mu_j(\epsilon,\tilde{w})\right|>\frac{2\gamma}{l^{\tau}},~\forall l\geq1,~\forall j\geq1
\bigg\}.
\end{align}

\begin{lemm}\label{lem16}
If $\frac{\epsilon}{\gamma^2\omega}\leq\frac{\epsilon}{\gamma^3\omega}\leq\delta_6\leq\delta_5$ is  small enough, we have that for some $\gamma_0>0$,
\begin{align*}
B_{\gamma}\subseteq\tilde{A}_n\subset A_n,\quad n\geq0.
\end{align*}
\end{lemm}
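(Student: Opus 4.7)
\medskip

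\textbf{Proof plan.} I will argue by induction on $n$, with the Lipschitz dependence of the eigenvalues $\mu_j(\epsilon,\cdot)$ on $w$ and the decay estimate \eqref{G7} as the central tools.

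For the base case $n=0$, the set $A_0$ of \eqref{D8} is defined purely in terms of the unperturbed eigenvalues $\bar\mu_j$, which do not depend on $w$. For $(\epsilon,\omega)\in B_\gamma$ the defining inequalities hold with $2\gamma$ in place of $\gamma$, so each has slack at least $\gamma/l^\tau \geq \gamma/N_0^\tau$ for $1\leq l\leq N_0$. Translating this into a distance in the $(\epsilon,\omega)$-plane (a displacement $\Delta\omega$ changes $\omega l$ by $l|\Delta\omega|$) gives $\mathrm{dist}((\epsilon,\omega),\partial A_0)\geq c\gamma/N_0^{\tau+1}$, which exceeds $2\gamma_0\gamma^4/N_0^{\tau+1}$ as soon as $\gamma_0$ is small (recalling $\gamma\leq 1$). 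Hence $B_\gamma\subseteq\tilde A_0$.

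For the inductive step, assume $B_\gamma\subseteq\tilde A_k$ for all $k\leq n$. Since $A_{n+1}\subseteq A_n$, the resonance conditions inherited from $A_n$ already enjoy slack $\geq 2\gamma_0\gamma^4/N_n^{\tau+1}\geq 2\gamma_0\gamma^4/N_{n+1}^{\tau+1}$ by the induction hypothesis, so only the new conditions at level $n+1$ require checking, namely
\[
|\omega l-\mu_j(\epsilon,w_n)|>\gamma/l^\tau,\qquad |\omega l-j|>\gamma/l^\tau,\qquad 1\leq l\leq N_{n+1},\ j\geq 1,
\]
with the same slack $2\gamma_0\gamma^4/N_{n+1}^{\tau+1}$. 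The bound on $|\omega l-j|$ is immediate from the $w$-independent $2\gamma$-version present in $B_\gamma$. For the first, observe that $B_\gamma\subseteq\tilde A_n$ where the cutoff $\varphi_n=1$, whence $w_n=\tilde w_n$. A standard Lipschitz estimate for the eigenvalues of the Sturm-Liouville problem \eqref{D4}, combined with \eqref{G7}, then yields
\[
|\mu_j(\epsilon,\tilde w)-\mu_j(\epsilon,w_n)|\leq C\epsilon\|\tilde w-\tilde w_n\|_s \leq \frac{C'\epsilon^2}{\gamma\omega}N_{n+1}^{-(\tau+\sigma+2)/2}.
\]
The smallness $\epsilon/(\gamma^3\omega)\leq\delta_6$ turns this into $C''\delta_6\gamma^2 N_{n+1}^{-(\tau+\sigma+2)/2}$, which is strictly less than $\gamma/(2l^\tau)$ for $l\leq N_{n+1}$ because $(\sigma+2-\tau)/2>0$ and $\delta_6$ is small. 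Subtracting from the $2\gamma/l^\tau$ margin enforced by $B_\gamma$ leaves a residual $\geq \gamma/l^\tau+c\gamma/N_{n+1}^\tau$, which translates (as in the base case) into an $(\epsilon,\omega)$-plane distance exceeding $2\gamma_0\gamma^4/N_{n+1}^{\tau+1}$ once $\gamma_0$ is taken small. Therefore $B_\gamma\subseteq\tilde A_{n+1}$.

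The principal obstacle is the uniform-in-$j$ Lipschitz estimate $|\mu_j(\epsilon,w)-\mu_j(\epsilon,w')|\leq C\epsilon\|w-w'\|_s$: one must show that $\lambda_j(\epsilon,w)$ stays bounded away from $0$ (so that the map $\lambda\mapsto\sqrt\lambda$ has a uniform Lipschitz constant) and that $\lambda_j$ is itself Lipschitz in $w$, the perturbation being the bounded multiplication operator $\epsilon\Pi_V f'$. Both facts follow from the perturbative structure of \eqref{D4} for $\epsilon$ small, using $\bar\lambda_1>0$; the remaining case $\lambda_j\leq 0$ is vacuous by the remark following \eqref{D6}.
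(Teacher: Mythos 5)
Your overall strategy coincides with the paper's: induct on $n$, get the base case from the $2\gamma$ margin in $B_\gamma$ versus the $\gamma$ margin in $A_0$, and in the inductive step use $w_n=\tilde w_n$ on $\tilde A_n$, the Lipschitz dependence of the eigenvalues, and \eqref{G7} to pass from $\tilde\mu_j(\epsilon,\omega)$ to $\mu_j(\epsilon,w_n(\epsilon,\omega))$. The base case and the comparison $|\mu_j(\epsilon,\tilde w)-\mu_j(\epsilon,w_n(\epsilon,\omega))|\lesssim \frac{\epsilon}{\gamma\omega}N_{n+1}^{-(\tau+\sigma+2)/2}\le\frac{\gamma}{2l^\tau}$ are fine and match the paper.

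The gap is in the final step, where you say the residual margin ``translates (as in the base case) into an $(\epsilon,\omega)$-plane distance.'' In the base case the frequencies $\bar\mu_j$ are constants, so a displacement of the point only changes $\omega l$ by $l|\Delta\omega|$. At level $n+1$ the condition defining $A_{n+1}$ at a displaced point $(\bar\epsilon,\bar\omega)$ involves $\mu_j\bigl(\bar\epsilon,w_n(\bar\epsilon,\bar\omega)\bigr)$, i.e.\ the excluded frequencies themselves move with the point, both through the explicit $\epsilon$-dependence and through $w_n(\bar\epsilon,\bar\omega)$. Your proposal never controls $\bigl|\mu_j(\bar\epsilon,w_n(\bar\epsilon,\bar\omega))-\mu_j(\epsilon,w_n(\epsilon,\omega))\bigr|$: for that one must invoke the $C^1$ bounds of $(\mathrm{S1})_n$, namely $\|\partial_\omega w_n\|_s\le K_1\epsilon/(\gamma^2\omega)$ and $\|\partial_\epsilon w_n\|_s\le K_1/(\gamma\omega)$, to bound $\|w_n(\bar\epsilon,\bar\omega)-w_n(\epsilon,\omega)\|_s$ by the ball radius $2\gamma_0\gamma^4/N_{n+1}^{\tau+1}$, and the two-variable Lipschitz estimate \eqref{G10} of Lemma \ref{lem17} (your Lipschitz bound is only in $w$ at fixed $\epsilon$, so the direct $|\bar\epsilon-\epsilon|$ contribution to the eigenvalue is also unaccounted for). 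This is precisely where the hypothesis $\frac{\epsilon}{\gamma^2\omega}\le\delta_6$ in the statement is used in the paper — a condition your argument never touches, which is a sign the estimate it controls is missing. The omission is fillable with tools already established ($(\mathrm{S1})_n$, Lemma \ref{lem17}), since those extra terms are of size $O(\gamma_0\gamma^3/N_{n+1}^{\tau+1})$ and hence absorbed by the $\gamma/(2N_{n+1}^{\tau})$ slack for $\gamma_0$ and $\epsilon/(\gamma^2\omega)$ small, but as written the inductive step does not go through. (A minor further remark: the paper secures $\nu_0=\inf_j|\lambda_j|>0$ via the nondegeneracy of $\hat v$ and \eqref{G10}, shrinking $(\epsilon_1,\epsilon_2)$ and $r$; your perturbative argument from $\bar\lambda_1>0$ is an acceptable alternative.)
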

\begin{lemm}\label{lem17}
For all $(\epsilon,w),(\bar{\epsilon},\bar{w})\in(\epsilon_1,\epsilon_2)\times\left\{W\cap H^s:\|w\|_{s}<r \right\}$, the eigenvalues of \eqref{D4} satisfy that for some constant $\nu>0$,
\begin{align}\label{G10}
|\lambda_{j}(\epsilon,w)-\lambda_{j}(\bar{\epsilon},\bar{w})|\leq \nu(|\epsilon-\bar{\epsilon}|+\|w-\bar{w}\|_s),\quad \forall j\geq1.
\end{align}
\end{lemm}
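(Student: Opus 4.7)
The plan is to recast \eqref{D4} as the eigenvalue problem for a self-adjoint operator depending on the parameters $(\epsilon,w)$, and then invoke the min-max characterization together with Lipschitz continuity of the potential.

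First I would introduce the weighted inner product $\langle u,v\rangle_{\rho}:=\int_0^{\pi} u\bar{v}\,\rho(x)\,\mathrm{d}x$ on $L^2((0,\pi),\rho\,\mathrm{d}x)$ and define the operator
\begin{equation*}
\mathcal{T}(\epsilon,w)\,y:=\frac{1}{\rho(x)}(p(x)y'')''-\frac{\epsilon}{\rho(x)}\Pi_V f'\bigl(t,x,v(\epsilon,w)+w\bigr)\,y,\qquad y\in H^4\cap H^2_p(0,\pi).
\end{equation*}
Because $\Pi_V$ averages out the $t$-variable, the zero-order coefficient
\begin{equation*}
V_{\epsilon,w}(x):=\frac{\epsilon}{\rho(x)}\,\Pi_V f'\bigl(t,x,v(\epsilon,w(t,x))+w(t,x)\bigr)
\end{equation*}
depends only on $x$ and is real, so $\mathcal{T}(\epsilon,w)$ is self-adjoint on $L^2((0,\pi),\rho\,\mathrm{d}x)$ with compact resolvent. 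Its eigenvalues are exactly $\{\lambda_j(\epsilon,w)\}_{j\geq 1}$.

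Next I would apply the min-max principle: since $\mathcal{T}(\epsilon,w)-\mathcal{T}(\bar\epsilon,\bar w)$ is the multiplication operator $-(V_{\epsilon,w}-V_{\bar\epsilon,\bar w})$, a standard consequence of min-max for self-adjoint operators sharing the same fourth-order principal part gives
\begin{equation*}
|\lambda_j(\epsilon,w)-\lambda_j(\bar\epsilon,\bar w)|\leq\|V_{\epsilon,w}-V_{\bar\epsilon,\bar w}\|_{L^\infty(0,\pi)},\qquad\forall j\geq 1.
\end{equation*}
Thus everything reduces to a uniform-in-$j$ Lipschitz bound on the potential.

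To estimate the potential difference I would split
\begin{equation*}
V_{\epsilon,w}-V_{\bar\epsilon,\bar w}=\tfrac{\epsilon-\bar\epsilon}{\rho}\Pi_V f'(t,x,v(\epsilon,w)+w)+\tfrac{\bar\epsilon}{\rho}\,\Pi_V\bigl[f'(t,x,v(\epsilon,w)+w)-f'(t,x,v(\bar\epsilon,\bar w)+\bar w)\bigr].
\end{equation*}
The first term is immediately bounded by $C|\epsilon-\bar\epsilon|$ since $f\in\mathcal{C}_k$. For the second term I would use the mean value theorem in the $u$-variable together with the uniform bound on $\partial_u f'=f''$ on bounded sets, obtaining pointwise
\begin{equation*}
|f'(t,x,v(\epsilon,w)+w)-f'(t,x,v(\bar\epsilon,\bar w)+\bar w)|\leq C\bigl(|v(\epsilon,w)-v(\bar\epsilon,\bar w)|+|w-\bar w|\bigr).
\end{equation*}
Applying the Sobolev embeddings $H^2(0,\pi)\hookrightarrow L^\infty(0,\pi)$ and $\mathcal{H}^s\hookrightarrow L^\infty(\mathbb{T};H^2_p(0,\pi))$ from \eqref{B2}, together with the $C^2$ Lipschitz estimate \eqref{C5} for the map $v$ provided by Lemma \ref{lem1}, the second term is bounded by $C(|\epsilon-\bar\epsilon|+\|w-\bar w\|_s)$. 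Combining yields $\|V_{\epsilon,w}-V_{\bar\epsilon,\bar w}\|_{L^\infty}\leq\nu(|\epsilon-\bar\epsilon|+\|w-\bar w\|_s)$, which plugged into the min-max inequality gives \eqref{G10}.

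The only delicate point is the min-max comparison step, since formally the operators $\mathcal{T}(\epsilon,w)$ are unbounded; however, because their difference is a bounded multiplication operator (by $L^\infty$) and they share the same principal part $\frac{1}{\rho}(p\,\cdot'')''$ with the same domain, the Rayleigh-quotient comparison applies verbatim. Once this is in place, the rest of the argument is routine composition and Sobolev embedding bookkeeping, controlled entirely by the hypotheses $f\in\mathcal{C}_k$, $s>1/2$, and the regularity of the $v$-map from Lemma \ref{lem1}.
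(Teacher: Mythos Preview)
Your argument is correct and reaches the conclusion, but it proceeds differently from the paper. The paper does not invoke the min-max principle; instead it views the eigenvalues $\lambda_j$ as functions of the potential $g=-\epsilon\,\Pi_V f'(t,x,v(\epsilon,w)+w)$ and quotes a differentiability result (Theorem~4.4 in \cite{Zettl1997dependence}) giving
\[
\mathrm{D}_g\lambda_j(g)[h]=-\int_0^\pi (\psi_j(g))^2\,h\,\mathrm{d}x,
\]
then integrates along the segment between $g$ and $\bar g$ and bounds the result by $\|(g-\bar g)/\rho\|_{L^\infty}$ times the $L^2_\rho$-normalization of the eigenfunctions. Your route via the Rayleigh-quotient comparison for two self-adjoint operators differing by a bounded multiplication is more self-contained (no external citation needed) and gives exactly the same bound $|\lambda_j-\bar\lambda_j|\le\|V_{\epsilon,w}-V_{\bar\epsilon,\bar w}\|_{L^\infty}$. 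The final step---Lipschitz control of the potential in terms of $|\epsilon-\bar\epsilon|+\|w-\bar w\|_s$ using Lemma~\ref{lem1}, the regularity of $f$, and the embeddings $H^2(0,\pi)\hookrightarrow L^\infty$ and $\mathcal H^s\hookrightarrow L^\infty(\mathbb T;H^2_p)$---is essentially identical in both approaches.
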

\begin{proof}
Define
\begin{align*}
g(t,x)=-{\epsilon}\Pi_{V}f'(t,x,v(\epsilon,w(t,x))+w(t,x))\in {H}^2_p(0,\pi)\hookrightarrow C^{1}[0,\pi].
\end{align*}
Let $\psi_{j}(g),j\in\mathbb{N}^{+}$ denote the eigenfunctions with respect to $\lambda_{j}(g)$  of problem \eqref{D4}. Since the coefficients in problem \eqref{D4} satisfy the assumptions of \cite[Theorem 4.4]{Zettl1997dependence}, then it yields
\begin{align*}
\mathrm{D}_{g}\lambda_{j}(g)[h]=-\int^{\pi}_{0}(\psi_{j}(g))^2 h \mathrm{d}x.
\end{align*}
Then, one has
\begin{align*}
|\lambda_{j}(g)-\lambda_{j}(\bar{g})|=&\left|\int^{1}_{0}\int^{\pi}_{0}(\psi_{j}(g+\mathfrak{v}(\bar{g}-g)))^2(g-\bar{g}) \mathrm{d}x\mathrm{d}\mathfrak{v}\right|\\
\leq&\max_{\mathfrak{v}\in[0,1]}\left|\int^{\pi}_{0}(\psi_{j}(g+\mathfrak{v}(\bar{g}-g)))^2(g-\bar{g}) \mathrm{d}x\right|\\
\leq&\|(g-\bar{g})/\rho\|_{L^{\infty}(\mathbb{T};{H}^2_p(0,\pi))}\max_{\mathfrak{v}\in[0,1]}
\left|\int^{\pi}_{0}(\psi_{j}(g+\mathfrak{v}(\bar{g}-g)))^2\rho\mathrm{d}x\right|\\
\leq&\|(g-\bar{g})/\rho\|_{H^2(0,\pi)}\stackrel{\text{Lemmata }\ref{lem23},~\ref{lem1}}{\leq} \nu(|\epsilon-\bar{\epsilon}|+\|w-\bar{w}\|_s),
\end{align*}
which completes the proof of the lemma.
\end{proof}
Moreover the non-degeneracy of $\hat{v}=v(\hat{\epsilon},0)$ means that $\lambda_{j}(\hat{\epsilon},0)\neq0$ for all $j\geq1$. Then, formula \eqref{G10} implies
\begin{align*}
\nu_0:=\inf\left\{|\lambda_j(\epsilon,\omega)|:~j\geq1,\epsilon\in(\epsilon_1,\epsilon_2),\|w\|_s< r\right\}>0.
\end{align*}
In fact, here we may take that $|\epsilon_2-\epsilon_1|,r$ are smaller than ones  in Lemma \ref{lem1} and use such a  number $\nu_0$ for the proof of Lemma \ref{lem16}.
\begin{proof}[\bf {Proof of Lemma \ref{lem16}}]
Obviously, we have $\tilde{A}_n\subset A_n,\forall n\in\mathbb{N}$.
 Moreover we claim that\\
 {\bf (F1):}~ If $\frac{\epsilon}{\gamma^2\omega}\leq\delta_6$ small enough, then there exists $\gamma_0>0$ such that for all $(\epsilon,\omega)\in B_{\gamma}$,
\begin{align*}
{\mathcal{B}}\left((\epsilon,\omega),\frac{2\gamma_0\gamma^4}{N^{\tau+1}_n}\right)\subseteq A_n.
\end{align*}
This implies that $(\epsilon,\omega)$ may belong to $\tilde{A}_n$ for all $n\in\mathbb{N}$.

Let us check claim {\bf (F1)} by induction.
If $\gamma_0\leq\frac{1}{2}$, for all $(\bar\epsilon,\bar\omega)\in{\mathcal{B}}\left((\epsilon,\omega),\frac{2\gamma_0\gamma^4}{N^{\tau+1}_0}\right)$,  one has
\begin{align*}
\left|\bar\omega l-\bar{\mu}_j\right|\geq\left|\omega l-\bar{\mu}_j\right|-\left|\omega-\bar\omega\right|l>\frac{2\gamma}{l^{\tau}}-\frac{2\gamma_0\gamma^4}{N^{\tau+1}_0}l
\geq\frac{\gamma}{l^\tau}+\frac{\gamma}{N^\tau_0}-\frac{2\gamma_0\gamma^4}{N^{\tau}_0}\geq\frac{\gamma}{l^{\tau}},\quad \forall1\leq l\leq N_0,
\end{align*}
which  gives rise to $(\bar\epsilon,\bar\omega)\in A_n$.

Suppose  that
\[{\mathcal{B}}\left((\epsilon,\omega),\frac{2\gamma_0\gamma^4}{N^{\tau+1}_n}\right)\subseteq A_n.\]
It is clear that $(\epsilon,\omega)\in\tilde{A}_n$, which leads to $\tilde{w}_n(\epsilon,\omega)={w}_n(\epsilon,\omega)$.

Finally, we show that claim {\bf (F1)} holds at $(n+1)$-th step. For $\gamma_0\leq\frac{1}{2}$, a similar argument yields that for all $(\bar\epsilon,\bar\omega)\in\mathcal{B}\left((\epsilon,\omega),\frac{2\gamma_0\gamma^4}{N^{\tau+1}_{n+1}}\right)$,
\begin{align*}
\left|\omega_1l-{j}\right|\geq\left|\omega l-{j}\right|-\left|\omega-\bar\omega\right|l>\frac{2\gamma}{l^{\tau}}-\frac{2\gamma_0\gamma^4}{N^{\tau+1}_{N_{n+1}}}l\geq \frac{\gamma}{l^\tau}
+\frac{\gamma}{N^\tau_{n+1}}-\frac{2\gamma_0\gamma^4}{N^{\tau}_{n+1}}\geq\frac{\gamma}{l^{\tau}},\quad \forall 1\leq l\leq N_{n+1}.
\end{align*}
For brevity, denote $\mu^2_{j,n}(\bar\epsilon,\bar\omega)=\lambda_{j,n}(\bar\epsilon,\bar\omega):=\lambda_{j}(\bar\epsilon,w_n(\bar\epsilon,\bar\omega))$, $\tilde{\mu}^2_j(\epsilon,\omega)=\tilde{\lambda}_j(\epsilon,\omega):=\lambda_{j}(\epsilon,\tilde{w}(\epsilon,\omega))$.
Then, it follows from \eqref{G10}, $(\mathrm{S1})_n$ and \eqref{G7} that
\begin{align*}
\left|\mu_{j,n}(\bar\epsilon,\bar\omega)-{\tilde{\mu}_{j}(\epsilon,\omega)}\right|=&\frac{|\lambda_{j,n}(\bar\epsilon,\bar\omega)
-\tilde{\lambda}_{j}(\epsilon,\omega)|}{\left|{\mu_{j,n}(\bar\epsilon,\bar\omega)}\right|+\left|{\tilde{\mu }_{j}(\epsilon,\omega)}\right|}\leq
\frac{1}{\sqrt{\nu_0}}{|\lambda_{j,n}(\bar\epsilon,\bar\omega)
-\tilde{\lambda}_{j}(\epsilon,\omega)|}\\
\leq&\frac{\nu}{\sqrt{\nu_0}}(|\bar\epsilon-\epsilon|+\|w_n(\bar\epsilon,\bar\omega)-\tilde{w}(\epsilon,\omega)\|_s)\\
\leq&\frac{\nu}{\sqrt{\nu_0}}|\bar\epsilon-\epsilon|+\frac{\nu}{\sqrt{\nu_0}}\|w_n(\bar\epsilon,\bar\omega)-{w}_n(\bar\epsilon,\omega)\|_s\\
&+\frac{\nu}{\sqrt{\nu_0}}\|w_n(\bar\epsilon,\omega)-{w}_n(\epsilon,\omega)\|_s
+\frac{\nu}{\sqrt{\nu_0}}\|\tilde{w}_n(\epsilon,\omega)-\tilde{w}(\epsilon,\omega)\|_s\\
\leq&\frac{\nu}{\sqrt{\nu_0}}\left(\frac{2\gamma_0\gamma^4}{N^{\tau+1}_{n+1}}+\frac{2K_1}{\gamma^2\omega}\frac{2\gamma_0\gamma^4}{N^{\tau+1}_{n+1}}+
\frac{\bar{C}\epsilon}{\gamma\omega}N^{-(\tau+\sigma+2)/2}_{n+1}\right).
\end{align*}
Since  $-(\tau+\sigma+2)/2\leq{-\tau}$,  we infer  that for $\gamma_0,\frac{\epsilon}{\gamma^2\omega}$ small enough,
\begin{align*}
\left|\mu_{j,n}(\bar\epsilon,\bar\omega)-{\tilde{\mu}_{j}(\epsilon,\omega)}\right|\leq\frac{\gamma}{2l^{\tau}}.
\end{align*}
Consequently, for all $(\epsilon_1,\omega_1)\in\mathcal{B}\left((\epsilon,\omega),\frac{2\gamma_0\gamma^4}{N^{\tau+1}_{n+1}}\right),$
we can obtain that for $\gamma_0,\frac{\epsilon}{\gamma^2\omega}$ small enough,
\begin{align*}
\left|\omega_1l-\mu_{j,n}(\bar\epsilon,\bar\omega)\right|&\geq\left|\omega l-\tilde{\mu}_{j}(\epsilon,\omega)\right|-|\omega-\bar\omega|l-\left|\mu_{j,n}(\bar\epsilon,\bar\omega)-{\tilde{\mu}_{j}(\epsilon,\omega)}\right|\\
&>\frac{2\gamma}{l^{\tau}}-\frac{2\gamma_0\gamma^4}{N^{\tau+1}_{n+1}}l-\frac{\gamma}{2l^{\tau}}>\frac{\gamma}{l^{\tau}},\quad\forall l=1,\cdots,N_{n+1}.
\end{align*}
The proof  is completed.
\end{proof}
Let $\Omega:=(\epsilon',\epsilon'')\times(\omega',\omega'')$ stand for a rectangle contained in $(\epsilon_1,\epsilon_2)\times(2\gamma,+\infty)$ and set
\begin{align}\label{G13}
\nu_1:=&\inf\left\{\left|{\mu_{j+1}(\epsilon,\omega)}-\mu_j(\epsilon,\omega)\right|:j\geq1,\epsilon\in(\epsilon_1,\epsilon_2),\|w\|_s< r\right\}>0,\\
\nu_2:=&\inf\left\{\left|{\mu_{j+1}(\epsilon,\omega)}-{{\mu}_j(\epsilon,\omega)}\right|:j\geq1,(\epsilon,\omega)\in B_\gamma\right\},\nonumber
\end{align}
where $\mu^2_j(\epsilon,\omega)=\lambda_{j}(\epsilon,\omega):=\lambda_{j}(\epsilon,w(\epsilon,\omega))$. The proof of formula \eqref{G13} will be given in the appendix.
 Moreover, we assume $\omega''-\omega'\geq1$.
\begin{lemm}\label{lem18}
For fixed $\epsilon\in (\epsilon',\epsilon'')$, the measure estimate on $B_{\gamma}(\epsilon)$ satisfies
\begin{align}\label{G11}
\mathrm{meas}(B_{\gamma}(\epsilon)\cap(\omega',\omega''))\geq(1-\mathcal{Q}\gamma)(\omega''-\omega')
\end{align}
for some constant $\mathcal{Q}>0$, where $B_{\gamma}(\epsilon):=\left\{\omega:(\epsilon,\omega)\in B_\gamma\right\}$. Furthermore
\begin{align*}
\mathrm{meas}(B_{\gamma}\cap\Omega)\geq(1-\mathcal{Q}\gamma)\mathrm{meas}(\Omega)=(1-\mathcal{Q}\gamma)(\omega''-\omega')(\epsilon''-\epsilon').
\end{align*}
\end{lemm}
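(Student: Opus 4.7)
The plan is to estimate the one-dimensional measure of the complement $(\omega',\omega'')\setminus B_\gamma(\epsilon)$ by decomposing it into the ``resonant'' $\omega$-intervals coming from the three Diophantine/Melnikov-type inequalities in \eqref{G9} (the smallness condition $\epsilon/\omega\leq\delta_7\gamma^5$ is automatic on $\Omega$ under the standing hypotheses of Theorem~\ref{Th1}), estimating each contribution separately, and summing. The two-dimensional bound then follows at once from Fubini's theorem once the one-dimensional estimate is uniform in $\epsilon$.

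For each fixed $\epsilon$ and each pair $(l,j)$, set
\[
R^{(1)}_{l,j}:=\{\omega:|\omega l-\bar{\mu}_j|\leq 2\gamma/l^\tau\},\quad R^{(2)}_{l,j}:=\{\omega:|\omega l-j|\leq 2\gamma/l^\tau\},\quad R^{(3)}_{l,j}:=\{\omega:|\omega l-\mu_j(\epsilon,\tilde{w})|\leq 2\gamma/l^\tau\}.
\]
Each individual resonant $\omega$-interval has Lebesgue measure at most $4\gamma/l^{\tau+1}$, so the real task is, for each $l$, to count the $j$'s whose interval meets $(\omega',\omega'')$ and to verify that the resulting double series converges.

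For $R^{(2)}_{l,j}$ the integers $j$ must lie in $(l\omega'-1,l\omega''+1)$, giving at most $l(\omega''-\omega')+O(1)$ admissible $j$; the contribution at level $l$ is thus $O(\gamma(\omega''-\omega')l^{-\tau})$, summable over $l\geq 1$ because $\tau>1$. For $R^{(1)}_{l,j}$ only $l\in\{1,\dots,N_0\}$ contributes, and the spectral asymptotics $\bar{\mu}_j\sim j^2$ for \eqref{B3} (explained in the introduction via the reduction $\mathcal{E}=\mathrm{U}^{-1}\mathcal{H}\mathrm{U}$) yield at most $O(\sqrt{l(\omega''-\omega')/\omega'}+1)$ admissible $j$ per $l$; summing over the finite range of $l$ produces a contribution $O(\gamma(\omega''-\omega'))$. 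For $R^{(3)}_{l,j}$ the same quadratic asymptotics $\mu_j(\epsilon,\tilde{w})\sim j^2$, transferred from the unperturbed case via the Lipschitz estimate of Lemma~\ref{lem17} and the bound $\|\tilde{w}\|_s=O(\epsilon/(\gamma\omega))$ from \eqref{G6}, gives a per-$l$ contribution of order $\gamma(\omega''-\omega')/(\sqrt{\omega'}\,l^{\tau+1/2})$, summable over all $l\geq 1$ since $\tau+1/2>1$. Adding these three bounds produces \eqref{G11} with a universal constant $\mathcal{Q}>0$. The two-dimensional estimate is then immediate: since \eqref{G11} is uniform in $\epsilon\in(\epsilon',\epsilon'')$, Fubini's theorem gives
\[
\mathrm{meas}(B_\gamma\cap\Omega)=\int_{\epsilon'}^{\epsilon''}\mathrm{meas}(B_\gamma(\epsilon)\cap(\omega',\omega''))\,d\epsilon\geq(1-\mathcal{Q}\gamma)(\omega''-\omega')(\epsilon''-\epsilon').
\]

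The main obstacle will be making the counting step uniform for $R^{(3)}_{l,j}$, since here $l$ is unbounded and the eigenvalues $\mu_j(\epsilon,\tilde{w})$ depend on the Nash--Moser iterate $\tilde{w}=\tilde{w}(\epsilon,\omega)$ constructed in Theorem~\ref{theo2}. Controlling this uniformly requires combining the spectral asymptotics for \eqref{B3} with the Lipschitz dependence of eigenvalues from Lemma~\ref{lem17} and the uniform smallness of $\tilde{w}$ on $B_\gamma$ from \eqref{G6}; once these are in place, the convergence of $\sum l^{-\tau}$ ($\tau>1$) and $\sum l^{-\tau-1/2}$ does the rest.
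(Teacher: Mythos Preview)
Your overall strategy---decompose the complement into the three Melnikov families, bound each resonant set per $(l,j)$, count the admissible $j$'s for each $l$, sum, then Fubini---matches the paper's proof. The two-dimensional step via Fubini is exactly what the paper does.

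There is, however, a genuine gap in your treatment of $R^{(3)}_{l,j}$. You assert that ``each individual resonant $\omega$-interval has Lebesgue measure at most $4\gamma/l^{\tau+1}$,'' but for $R^{(3)}$ the quantity $\mu_j(\epsilon,\tilde w(\epsilon,\omega))$ itself depends on $\omega$ through the Nash--Moser iterate, so $R^{(3)}_{l,j}$ is \emph{not} a fixed interval and the naive length bound is not automatic. The paper handles this by setting $g(\omega)=\omega l-\tilde\mu_j(\epsilon,\omega)$ and showing $\partial_\omega g\ge l/2$, which requires bounding $|\partial_\omega\tilde\mu_j|$. That bound comes from Lemma~\ref{lem17} combined with the \emph{derivative} estimate $\|\partial_\omega\tilde w\|_s\leq K(\gamma_0)\epsilon/(\gamma^5\omega)$ in \eqref{G6}, and it is precisely here that the smallness condition $\epsilon/(\gamma^5\omega)\leq\delta_7$ enters. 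You only invoke the zeroth-order bound $\|\tilde w\|_s=O(\epsilon/(\gamma\omega))$, which does not control $\partial_\omega\tilde\mu_j$ and therefore does not give the per-set measure estimate. Your closing paragraph identifies the counting step as the main obstacle, but the more delicate point is this monotonicity argument for the measure of each $R^{(3)}_{l,j}$.

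On the counting side your approach differs from the paper's and is worth noting. You count $j$'s via the quadratic asymptotics $\mu_j\sim j^2$, obtaining roughly $\sqrt{l}(\omega''-\omega')/\sqrt{\omega'}$ values of $j$ per $l$ and hence a contribution $O(\gamma(\omega''-\omega')l^{-\tau-1/2})$. The paper instead uses the uniform spectral gap $\nu_1=\inf_j|\mu_{j+1}-\mu_j|>0$ from \eqref{G13}, which gives the cruder count $\sharp j\leq (1/\nu_1)(l(\omega''-\omega')+4\gamma l^{-\tau})+1$ and a contribution $O(\gamma(\omega''-\omega')l^{-\tau})$. Both sums converge since $\tau>1$; your estimate is sharper but relies on the specific $j^2$ growth, while the paper's argument would work for any spectrum with a uniform lower gap.
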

\begin{proof}
Let $(B_{\gamma}(\epsilon))^{c}$ denote the complementary set of $B_{\gamma}(\epsilon)$. Using the definition of $B_{\gamma}$ yields
\begin{align*}
(B_{\gamma}(\epsilon))^{c}\subseteq\mathfrak{R}^1(\epsilon)\cup\mathfrak{R}^2\cup\mathfrak{R}^3,
\end{align*}
where $\mathfrak{R}^1(\epsilon)=\bigcup\limits_{l\geq1,j\geq1}\mathfrak{R}^1_{l,j}(\epsilon)$, $\mathfrak{R}^2=\bigcup\limits_{l\geq1,j\geq1}\mathfrak{R}^2_{l,j}$, $\mathfrak{R}^3=\bigcup\limits_{l\geq1,j\geq1}\mathfrak{R}^3_{l,j}$, and
\begin{align*}
\mathfrak{R}^1_{l,j}(\epsilon):=&\left\{\omega\in(\omega',\omega''):\left|\omega l-\tilde{\mu}_{j}(\epsilon,\omega)\right|\leq\frac{2\gamma}{l^{\tau}}\right\},\\
\mathfrak{R}^2_{l,j}:=&\left\{\omega\in(\omega',\omega''):\left|\omega l-\bar{\mu}_{j}\right|\leq\frac{2\gamma}{l^{\tau}}\right\},\\
\mathfrak{R}^3_{l,j}:=&\left\{\omega\in(\omega',\omega''):\left|\omega l-{j}\right|\leq\frac{2\gamma}{l^{\tau}}\right\}.
\end{align*}
We now show the upper bound of $\mathrm{meas}(\mathfrak{R}^1(\epsilon))$. It follows from \eqref{G10}, \eqref{G6} and the definition of $\nu_0$ that
\begin{align*}
\left|\tilde{\mu}_{j}(\epsilon,\omega_1)-\tilde{\mu}_{j}(\epsilon,\omega)\right|&=\frac{\left|\tilde{\lambda}_{j}(\epsilon,\omega_1)
-\tilde{\lambda}_{j}(\epsilon,\omega)\right|}{\left|{\tilde{\mu}_{j}(\epsilon,\omega_1)}\right|+\left|{\tilde{\mu}_{j}(\epsilon,\omega)}\right|}\leq
\frac{1}{\sqrt{\nu_0}}{\left|\tilde{\lambda}_{j}(\epsilon,\omega_1)
-\tilde{\lambda}_{j}(\epsilon,\omega)\right|}\\
&\leq\frac{\nu}{\sqrt{\nu_0}}\|\tilde{w}(\epsilon,\omega_1)-\tilde{w}(\epsilon,\omega)\|_s\leq\frac{ \epsilon\nu K(\gamma_0)}{\sqrt{\nu_0}\gamma^5\omega}|\omega_1-\omega|,
\end{align*}
which leads to
\begin{align*}
\left|\partial_{\omega}\tilde{\mu}_j(\epsilon,\omega)\right|\leq\frac{ \epsilon\nu K(\gamma_0)}{\sqrt{\nu_0}\gamma^5\omega}.
\end{align*}
Let $g(\omega):=\omega l-\tilde{\mu}_j(\epsilon,\omega)$. Hence it is clear that for $\frac{\epsilon}{\gamma^5\omega}\leq\delta_7$  small enough,
\begin{align*}
\partial_{\omega}g(\omega)=l-\partial_{\omega}\tilde{\mu}_j(\epsilon,\omega)\geq{l}/{2},\quad\forall l\geq1,
\end{align*}
which carries out
\begin{align*}
\mathrm{meas}(\mathfrak{R}^1_{l,j}(\epsilon))\leq\frac{|g(\omega_1)-g(\omega_2)|}{\min|\partial_{\omega}g(\omega)|}\leq\frac{8\gamma}{l^{\tau+1}}.
\end{align*}
For fixed $l$, we get
\begin{align*}
\omega'l-\frac{2\gamma}{l^\tau}\leq\tilde{\mu}_j(\epsilon,\omega)\leq\omega''l+\frac{2\gamma}{l^\tau}\quad\text{if}\quad\mathfrak{R}^1_{l,j}(\epsilon)\neq\emptyset.
\end{align*}
Since $B_\gamma\subseteq\tilde{A}_n$ (recall Lemma \ref{lem17}), one has $w(\epsilon,\omega)=\tilde{w}(\epsilon,\omega)$, which leads to ${{\lambda_{j}(\epsilon,\omega)}}={{\tilde{\lambda}_{j}(\epsilon,\omega)}}$ on $B_\gamma\cap\cap\{(\epsilon,\omega):{\epsilon}/{\omega}\leq\delta_7\gamma^5\}$.
Then, formula  \eqref{G6} implies $\nu_2\geq\nu_1>0$  for $\frac{\epsilon}{\gamma^5\omega}\leq\delta_7$\ small enough. Thus one has
\begin{align*}
\sharp j\leq\frac{1}{\nu_1}(l(\omega''-\omega')+\frac{4\gamma}{l^{\tau}})+1,
\end{align*}
where $\sharp j$ denotes the number of $j$. Consequently, we obtain
\begin{align*}
\mathrm{meas}(\mathfrak{R}^1(\epsilon))\leq\sum\limits_{l=1}^{+\infty}\frac{8\gamma}{l^{\tau+1}}\left(\frac{1}{\nu_1}(l(\omega''-\omega')
+\frac{4\gamma}{l^{\tau}})+1\right)\leq\sum\limits_{l=1}^{+\infty}\frac{8\gamma}{l^{\tau+1}}\mathcal{Q}''l(\omega''-\omega')
\leq \mathcal{Q}'\gamma(\omega''-\omega').
\end{align*}

The argument to prove the upper bounds of $\mathrm{meas}(\mathfrak{R}^2)$ and $\mathrm{meas}(\mathfrak{R}^3)$ is analogous to the one used as above. We will sketch the proof for simplicity. This shows formula \eqref{G11}.

Moreover we get
\begin{align*}
\mathrm{meas}(B_{\gamma}\cap\Omega)=\int_{\epsilon'}^{\epsilon''}\mathrm{meas}(B_{\gamma}(\epsilon)\cap(\omega',\omega''))
~\mathrm{d}\epsilon\stackrel{\eqref{G11}}\geq(1-\mathcal{Q}\gamma)\mathrm{meas}(\Omega).
\end{align*}
This ends the proof of the lemma.
\end{proof}

Theorem \ref{Th1}  follows from Lemma \ref{lem1}, Lemma \ref{lem18} and  Theorem \ref{theo2}.
\begin{proof}[{\bf{Proof of}} {\bf Theorem \ref{Th1}}]
With the help of Theorem \ref{theo2} and the step of Whitney extension (recall \eqref{G1}--\eqref{G6}),  $\tilde{w}(\epsilon,\omega)$  solves $(P)$-equation in \eqref{C1}.
Moreover, $\tilde{w}(\epsilon,\omega)$ is in $C^1(A_\gamma;W\cap \mathcal{H}^s)$. For $\frac{\epsilon}{\gamma^5\omega}\leq\delta_7$ small enough, by virtue of the fact $\|\tilde{w}\|_{s}<r$ (recall \eqref{G6}), Lemma \ref{lem1} presents that $v(\epsilon,w)$ solves $(Q)$-equation in \eqref{C1}. Then, one has that
\begin{align*}
\tilde{u}(\epsilon,\omega):=v(\epsilon,\tilde{w}(\epsilon,\omega))+\tilde{w}(\epsilon,\omega)\in {H}^2_p(0,\pi)\oplus(W\cap \mathcal{H}^s)
\end{align*}
is a solution of equation \eqref{B1}. Meanwhile,  estimates \eqref{C7}--\eqref{C8} can  be obtained  by \eqref{C5} and \eqref{G6}.

 Since $\tilde{u}$ solves
$-(p(x)u_{xx})_{xx}=\epsilon f(t,x,u)-\omega^2\rho(x)u_{tt}$, we obtain
\begin{align*}
-(p(x)\tilde{u}_{xx})_{xx}\in H^{2}(0,\pi),\quad \forall t\in\mathbb{T}.
\end{align*}
Moreover, $\alpha,\beta$ are in $H^4(0,\pi)$, which gives $\rho,p\in H^5(0,\pi)$ according to \eqref{A3}, then, one has
 $\tilde{u}(t,x)\in H^{6}{(0,\pi)}\cap H^{2}_p{(0,\pi)}\hookrightarrow C^5[0,\pi]$ for all $t\in\mathbb{T}$ .
\end{proof}

\section{Invertibility of linearized operators}\label{sec:3}
Let us complete the proof of Lemma \ref{lem1}. More precisely, we have to give the invertibility of operators $\mathcal{L}_{N}(\epsilon,\omega,w)$ (recall \eqref{D3}),
which is the core of any Nash-Moser iteration.

We rewrite $\mathcal{L}_{N}(\epsilon,\omega,w)$  as
\begin{align*}
\mathcal{L}_{N}(\epsilon,\omega,w)[h]
=&\mathfrak{L}_1(\epsilon,\omega,w)[h]+\mathfrak{L}_2(\epsilon,w)[h],\quad \forall h\in W_{N},
\end{align*}
where
\begin{align}
&\mathfrak{L}_1(\epsilon,\omega,w)[h]:=-L_{\omega}h+\epsilon \mathrm{P}_N\Pi_Wf'(t,x,v(\epsilon,\omega,w)+w)h,\nonumber\\
&\mathfrak{L}_2(\epsilon,w)[h]:=\epsilon \mathrm{P}_N\Pi_Wf'(t,x,v(\epsilon,w)+w)\mathrm{D}_{w}v(\epsilon,w)[h].\nonumber
\end{align}
Let  $b(t,x):=f'(t,x,v(\epsilon,\omega,w(t,x))+w(t,x))$. Using \eqref{R11}, $\|w\|_{{s+\sigma}}\leq1$ and Lemma \ref{lem1}, we derive
\begin{align}
\| b\|_{s}&\leq\|b\|_{{s+\sigma}}\leq C,\quad \forall s>{1}/{2},\label{H1}\\
\|b\|_{{s'}}&\leq C(s')(1+\|w\|_{{s'}}),\quad \forall s'\geq s>{1}/{2}.\label{H2}
\end{align}
With the help of decomposing
$$b(t,x)=\sum\limits_{k\in\mathbb{Z}} b_{k}(x)e^{\mathrm{i}k t},\quad h(t,x)=\sum\limits_{1\leq |l|\leq N}h_l(x)e^{\mathrm{i}l t},$$ the operator $\mathfrak{L}_1(\epsilon,\omega,w)$ can be written as
\begin{align*}
\mathfrak{L}_1(\epsilon,\omega,w)[h]=&\sum\limits_{{1\leq|l|\leq N}}\left(\omega^2l^2\rho h_{l}-(p(h_l)'')''\right)e^{\mathrm{i}lt}+\epsilon \mathrm{P}_{N}\Pi_{W}\left(\sum\limits_{k\in\mathbb{Z},{1\leq|l|\leq N}}b_{k-l}h_le^{\mathrm{i}kt}\right)\\
=&\rho\mathfrak{L}_{{1,\mathrm{D}}}[h]-\rho\mathfrak{L}_{{1,\mathrm{ND}}}[h],
\end{align*}
where  $b_0=\Pi_{V}f'(t,x,v(\epsilon,w)+w)$ and
\begin{align}
&\mathfrak{L}_{{1,\mathrm{D}}}[h]=\sum\limits_{1\leq|l|\leq N}\left(\omega^2l^2h_l-\frac{1}{\rho}(ph_l'')'' +\epsilon\frac{b_0}{\rho} h_l\right)e^{\mathrm{i}lt},\nonumber\\
&\mathfrak{L}_{{1,\mathrm{ND}}}[h]=-\frac{\epsilon}{\rho}\sum\limits_{1\leq|l|,|k|\leq N,l\neq k}b_{k-l}h_le^{\mathrm{i}k t}.\nonumber
\end{align}
Let us apply the results of \cite[cf. Theorem 1.2, Proposition 6.2]{MR3346145} to give the  asymptotic formulae of the eigenvalues to  problem \eqref{D4} for $\rho,p$ satisfying \eqref{A3}.
\begin{lemm}\label{lem3}
Denote by $\lambda_{j}(\epsilon, w)$ and $\psi_{j}(\epsilon, w),j\in\mathbb{N}^{+}$ the eigenvalues and the eigenfunctions of problem \eqref{D4} respectively. One has
\begin{align}\label{H4}
\lambda_{1}(\epsilon,w)<\lambda_{2}(\epsilon,w)<\cdots
<\lambda_{j}(\epsilon,w) < \cdots
\end{align}
with $\lambda_{j}(\epsilon,w)\rightarrow+\infty$ as $j\rightarrow+\infty$, and for all $\epsilon\in(\epsilon_1,\epsilon_2)$, $w\in\{W\cap H^s:\|w\|_{s}< r\}$,
\begin{align}\label{H5}
\lambda_j(\epsilon,w)=j^4+2j^2\upsilon_0+\upsilon_1(\epsilon,w)-\varrho_j(\epsilon,w)+\frac{o(1)}{j}\quad\text{as }j\rightarrow+\infty.
\end{align}
Here,
\begin{align}\label{H3}
\upsilon_0=&\mathfrak{d}(\pi)-\mathfrak{d}(0)+\frac{1}{\pi}\int^{0}_{\pi}\frac{\mathfrak{x}(x)}{\zeta(x)}\mathrm{d}x,\nonumber\\
\upsilon_1(\epsilon,w)=&\mathfrak{e}(\pi)-\mathfrak{e}(0)+\frac{1}{\pi}\int^{\pi}_{0}\mathfrak{g}(x)\zeta(x)\mathrm{d}x
+\frac{\varrho^2_0}{2}-\epsilon\frac{1}{\pi}\int^{\pi}_{0}\Pi_{V}f'(v(\epsilon,w)+w)(x)\zeta(x)\mathrm{d}x,\nonumber\\
\varrho_j(\epsilon,w)=&\frac{1}{\pi}\int^{\pi}_{0}\left(-\epsilon \Pi_{V}f'(v(\epsilon,w)+w)(x)\zeta(x)+\frac{\alpha'''(x)-\beta'''(x)}{4\zeta^3(x)}\right)\cos\left(2j\int^{x}_0\zeta(z)\mathrm{d}z\right)\mathrm{d}x,
\end{align}
where
\begin{align}
\mathfrak{d}=&\frac{3\alpha+5\beta}{2\xi},\quad \zeta=\left({\rho}/{p}\right)^{1/4},\quad \mathfrak{x}=\frac{5\alpha^2+5\beta^2+6\alpha\beta}{4}\geq\frac{\alpha^2+\beta^2}{2}\geq0,\label{H0}\\
\mathfrak{e}=&\frac{1}{\zeta^3}\left(\frac{2\mathfrak{z}^3}{3}-\frac{\eta^3_{-}}{2}-2\eta\eta_{+}-(\mathfrak{z}-\eta_{-})\eta_{-}\mathfrak{z}
+(\mathfrak{z}-\eta_{-})'\mathfrak{z}-(\alpha\eta_{-})'-\frac{(\eta_{-})''}{4}\right),\nonumber\\
\mathfrak{g}=&\frac{1}{8\zeta^4}\left(((\eta_{-})'-\eta^2_{-}-2\mathfrak{x})^2-8((\eta_{+})'-2\eta)^2\right),\nonumber\\
\mathfrak{z}=&\frac{\alpha+3\beta}{2},\quad\eta=\eta_{+}\eta_{-},\quad \eta_{\pm}=\beta\pm\alpha.\nonumber
\end{align}
 And $\psi_{j}(\epsilon,w)$ form an orthogonal basis of $L^2(0,\pi)$ with the scalar product
\[(y,z)_{L^2_{\rho}}:=\int_{0}^{\pi}\rho yz\mathrm{d}x.\]
Moreover, define an equivalent scalar product $(\cdot,\cdot)_{\epsilon,w}$ on ${H}^2_p(0,\pi)$ by
\begin{align*}
(y,z)_{\epsilon,w}:=\int_{0}^{\pi}py''z''-{\epsilon}\Pi_{V}f'(v(\epsilon,\omega,w)+w))yz+M\rho yz\mathrm{d}t
\end{align*}
with
\begin{align}\label{H6}
L_1\|y\|_{H^2}\leq\|y\|_{\epsilon,w}\leq L_2\|y\|_{H^2},\quad\forall y\in{H}_p^2(0,\pi)
\end{align}
for some constants $L_1$, $L_2>0$. The eigenfunctions $\psi_{j}(\epsilon,w)$ are also an orthogonal basis of ${H}^2_p(0,\pi)$
 with respect to the scalar product $(\cdot,\cdot)_{\epsilon,w}$ and one has that for all $ y=\sum_{j\geq1}\hat{y}_j\psi_{j}(\epsilon,w)$,
\begin{align}\label{H7}
\|y\|^2_{L^2_{\rho}}=\sum\limits_{j\geq1}(\hat{y}_j)^2,\quad
\|y\|^2_{\epsilon,w}=\sum\limits_{j\geq1}(\lambda_j(\epsilon,w)+M)(\hat{y}_j)^2,
\end{align}
where $\lambda_{j}{(\epsilon,w)}+M>0$ for $M>0$ large enough.
\end{lemm}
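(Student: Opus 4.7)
The plan is to reduce problem \eqref{D4} to the framework of Badanin--Korotyaev \cite[Theorem 1.2, Proposition 6.2]{MR3346145} via the Liouville--Barcilon--Gottlieb change of variable already introduced in the paper. Rewrite \eqref{D4} as the spectral problem for the operator
\begin{equation*}
\mathcal{E}_{\epsilon,w} u := \tfrac{1}{\rho}(p u'')'' - \tfrac{\epsilon}{\rho}\,\Pi_V f'\bigl(v(\epsilon,w)+w\bigr)\,u
\end{equation*}
acting on $L^2((0,\pi),\rho\,\mathrm{d}x)$ with the pinned boundary conditions. Since the potential $V_{\epsilon,w}(x):=-\tfrac{\epsilon}{\rho}\Pi_V f'(v(\epsilon,w)+w)$ is bounded (indeed it lies in $H^2$ by Lemma \ref{lem1} and $f\in\mathcal{C}_k$), $\mathcal{E}_{\epsilon,w}$ is a self-adjoint, bounded-from-below operator with compact resolvent. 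Standard spectral theory then delivers the ordered sequence \eqref{H4}, the blow-up $\lambda_j\to+\infty$, and an $L^2_\rho$-orthonormal basis of eigenfunctions $\{\psi_j(\epsilon,w)\}_{j\geq 1}$.

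Next, I would transport $\mathcal{E}_{\epsilon,w}$ to the interval $[0,\pi]$ with the flat measure via the unitary $\mathrm{U}$: because $\rho,p$ satisfy \eqref{A3}, Lemma 5.1 of \cite{MR3346145} identifies $\mathrm{U}\mathcal{E}\mathrm{U}^{-1}$ with the fourth-order operator $\mathcal{H}u=u_{\xi\xi\xi\xi}+2(p_1 u_\xi)_\xi+p_2 u$, and conjugating the multiplication $V_{\epsilon,w}$ by $\mathrm{U}$ produces a new multiplication $\widetilde{V}_{\epsilon,w}(\xi)$ on $L^2(0,\pi)$. Hence $\mathcal{E}_{\epsilon,w}$ is unitarily equivalent to $\mathcal{H}+\widetilde V_{\epsilon,w}$, whose eigenvalues coincide with $\lambda_j(\epsilon,w)$. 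At this point \cite[Theorem 1.2, Proposition 6.2]{MR3346145} applies directly to the perturbed operator $\mathcal{H}+\widetilde V_{\epsilon,w}$ (a bounded potential does not affect the admissibility hypotheses), giving an expansion of the form $\lambda_j=j^4+2j^2\upsilon_0+\upsilon_1-\varrho_j+o(1)/j$. The coefficients $\upsilon_0$, the $\epsilon$-independent part of $\upsilon_1$, and the non-potential part of $\varrho_j$ are read off from (5.4)--(5.5) of \cite{MR3346145} and yield the explicit formulae in \eqref{H3} involving $\mathfrak{d},\mathfrak{e},\mathfrak{g}$. The $\epsilon$-dependent corrections come from the first-order perturbation theory of adding $\widetilde V_{\epsilon,w}$: pulling back by $\mathrm{U}$ and changing variables $\xi=\phi(x)$ yields the integrals $-\tfrac{\epsilon}{\pi}\int_0^\pi \Pi_V f'(v+w)\,\zeta\,\mathrm{d}x$ and $-\tfrac{\epsilon}{\pi}\int_0^\pi \Pi_V f'(v+w)\,\zeta\cos(2j\phi(x))\,\mathrm{d}x$ in $\upsilon_1$ and $\varrho_j$ respectively (the factor $\zeta=(\rho/p)^{1/4}$ is the Jacobian of the Liouville substitution combined with the normalization $p^{1/8}\rho^{3/8}$ of the Barcilon--Gottlieb conjugation).

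For the second half of the lemma, the bilinear form
\begin{equation*}
(y,z)_{\epsilon,w}=\int_0^\pi p\,y''z''-\epsilon\Pi_V f'(v+w)\,yz+M\rho\,yz\,\mathrm{d}x
\end{equation*}
is symmetric on $H^2_p(0,\pi)$, and choosing $M>-\inf_j\lambda_j(\epsilon,w)$ (which is finite by the lower bound coming from \eqref{H4}) makes it positive definite. The equivalence \eqref{H6} with $\|\cdot\|_{H^2}$ then follows from two integrations by parts using the pinned boundary conditions together with Poincaré's inequality and the boundedness of $V_{\epsilon,w}$, with constants $L_1,L_2$ depending only on $\rho,p,\epsilon_0,r$ and $\|f'\|_{\mathcal{C}_k}$. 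Because $\psi_j(\epsilon,w)$ diagonalize $\mathcal{E}_{\epsilon,w}+M$, an integration by parts (again using the pinned boundary data) gives $(\psi_i,\psi_j)_{\epsilon,w}=(\lambda_j+M)\delta_{ij}$, so they form an orthogonal basis for $(\cdot,\cdot)_{\epsilon,w}$, and Parseval yields the two identities in \eqref{H7}.

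The main obstacle is the second step: verifying that the asymptotic formulae of \cite[Theorem 1.2, Proposition 6.2]{MR3346145} remain valid after the bounded $L^\infty$ perturbation $\widetilde V_{\epsilon,w}$, and correctly tracking how this potential feeds into $\upsilon_1(\epsilon,w)$ and the oscillatory remainder $\varrho_j(\epsilon,w)$ through the Liouville-normal form. Everything else is a fairly mechanical assembly of standard spectral theory for self-adjoint operators with compact resolvent and the change-of-variable machinery of \cite{MR3346145}.
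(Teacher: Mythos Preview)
Your proposal is correct and follows essentially the same approach as the paper. The paper's own proof is even more terse on the asymptotic part---it simply states that \eqref{H4}--\eqref{H5} ``can be [obtained] similar[ly] by Theorem 1.2 and Proposition 6.2 of \cite{MR3346145}'' and moves on---and then proves \eqref{H6}--\eqref{H7} exactly as you do: Poincar\'e's inequality for the norm equivalence, and multiplying the eigenvalue equation by $\psi_{j'}$ and integrating by parts to get $(\psi_j,\psi_{j'})_{\epsilon,w}=(\lambda_j+M)\delta_{j,j'}$.
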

\begin{proof}
Since the asymptotic formulae of the eigenvalues can be get similar by Theorem 1.2 and Proposition 6.2 of \cite {MR3346145},
we just need to check \eqref{H6}--\eqref{H7}.

Using Poincar\'{e} inequality yields $\|y'\|_{L^2(0,\pi)}\leq C\|y''\|_{L^2(0,\pi)}$. A simple calculation gives  \eqref{H6}. Moreover, one has
\begin{align*}
-(p\psi''_j(\epsilon,w))''-\epsilon\Pi_{V}f'(t,x,v(\epsilon,w)+w)\psi_j(\epsilon,w)=\lambda_j(\epsilon,w)\psi_j(\epsilon,w).
\end{align*}
Multiplying above equality  by $\psi_{j'}(\epsilon,w)$ and integrating by parts yields
\begin{align*}
(\psi_j,\psi_{j'})_{\epsilon,w}=\delta_{j,j'}\lambda_j(\epsilon,w),
\end{align*}
which implies \eqref{H7} for all $ y=\sum_{j\geq1}\hat{y}_j\psi_{j}(\epsilon,w)$.
\end{proof}
Formula \eqref{H6} shows the equivalent norm of the $s$-norm restricted to $W\cap \mathcal{H}^s$, i.e.,
\begin{align}\label{H8}
&L^2_1\|w\|^2_{s}\leq\sum\limits_{|l|\geq1,j\geq1}(\lambda_j(\epsilon,w)+M)(\hat{w}_{l,j})^2(1+l^{2s})\leq L^2_2\|w\|^2_{s}
\end{align}
for all $w=\sum_{|l|\geq1,j\geq1}\hat{w}_{l,j}\psi_{j}(\epsilon,w)e^{\mathrm{i}lt},$ where $L_i,i=1,2$ are given in \eqref{H6}. Moreover, applying Lemma \ref{lem3} yields that for $h_{l}=\sum^{+\infty}_{j=1}\hat{h}_{l,j}\psi_j(\epsilon,w),$
\[\omega^2l^2h_l-\frac{1}{\rho}(ph_l'')'' +\epsilon\frac{b_0}{\rho} h_l=\sum\limits^{+\infty}_{j=1}(\omega^2l^2-\lambda_j(\epsilon,w))\hat{h}_{l,j}\psi_j(\epsilon,w).\]
Then, $\mathfrak{L}_{{1,\mathrm{D}}}$  is a diagonal operator on $W_{N}$.

Define $|\mathfrak{L}_{{1,\mathrm{D}}}|^{\frac{1}{2}}$ by
\begin{align*}
|\mathfrak{L}_{{1,\mathrm{D}}}|^{\frac{1}{2}}h=\sum\limits_{1\leq|l|\leq N,j\geq1}\sqrt{|\omega^2l^2-\lambda_j(\epsilon,w)|} \hat{h}_{l,j}\psi_j(\epsilon,w)e^{\mathrm{i}lt},\quad \forall h\in W_{N}.
\end{align*}
If  $\omega^2l^2-\lambda_j(\epsilon,w)\neq0$, $\forall1\leq|l|\leq N,~\forall j\geq1$, its invertibility is
\begin{align*}
|\mathfrak{L}_{{1,\mathrm{D}}}|^{-\frac{1}{2}}{h}:=\sum\limits_{1\leq|l|\leq N,j\geq1}\frac{1}{\sqrt{|\omega^2l^2-\lambda_j(\epsilon,w)|}}\hat{{h}}_{l,j}
\psi_j(\epsilon,w)e^{\mathrm{i}lt}.
\end{align*}
Hence we can rewrite $\mathcal{L}_{N}(\epsilon,\omega,w)$ as
\begin{align*}
\mathcal{L}_{N}(\epsilon,\omega,w)=\rho|\mathfrak{L}_{1,{\mathrm{D}}}|^{\frac{1}{2}}(|\mathfrak{L}_{1,{\mathrm{D}}}|^{-\frac{1}{2}}
\mathfrak{L}_{1,{\mathrm{D}}}|\mathfrak{L}_{1,{\mathrm{D}}}|^{-\frac{1}{2}}-{R}_1-{R}_2)|\mathfrak{L}_{1,{\mathrm{D}}}|^{\frac{1}{2}},
\end{align*}
where
\begin{equation}\label{H9}
{R}_1=|\mathfrak{L}_{1,{\mathrm{D}}}|^{-\frac{1}{2}}
\mathfrak{L}_{1,{\mathrm{ND}}}|\mathfrak{L}_{1,{\mathrm{D}}}|^{-\frac{1}{2}},\quad
{R}_2=-|\mathfrak{L}_{1,{\mathrm{D}}}|^{-\frac{1}{2}}
\left(1/\rho\mathfrak{L}_2\right)|\mathfrak{L}_{1,{\mathrm{D}}}|^{-\frac{1}{2}}.
\end{equation}
Consequently, it follows from the definitions of $\mathfrak{L}_{1,\mathrm{D}}$, $|\mathfrak{L}_{1,{\mathrm{D}}}|^{-\frac{1}{2}}$ that
\begin{align*}
(|\mathfrak{L}_{1,{\mathrm{D}}}|^{-\frac{1}{2}}
\mathfrak{L}_{1,{\mathrm{D}}}|\mathfrak{L}_{1,{\mathrm{D}}}|^{-\frac{1}{2}}){h}=\sum\limits_{1\leq|l|\leq N,j\geq1}\mathrm{sign}(\omega^2l^2-\lambda_j(\epsilon,w))
\hat{h}_{l,j}\psi_{j}(\epsilon,w)e^{\mathrm{i}lt},\quad \forall{h}\in{W}_{N}.
\end{align*}
Then, it is invertible with
\begin{align}
\|(|\mathfrak{L}_{1,{\mathrm{D}}}|^{-\frac{1}{2}}
\mathfrak{L}_{1,{\mathrm{D}}}|\mathfrak{L}_{1,{\mathrm{D}}}|^{-\frac{1}{2}})^{-1}{h}\|_{s}
\stackrel{\eqref{H8}}{\leq}
 \frac{L_2}{L_1}\|h\|_{s}\label{H10},\quad\forall h\in W_{N},\forall s\geq0.
\end{align}
Therefore, $\mathcal{L}_{N}(\epsilon,\omega,w)$ may be reduced to
\begin{align}\label{H11}
\mathcal{L}_{N}(\epsilon,\omega,w)=\rho|\mathfrak{L}_{1,{\mathrm{D}}}|^{\frac{1}{2}}(|\mathfrak{L}_{1,{\mathrm{D}}}|^{-\frac{1}{2}}
\mathfrak{L}_{1,{\mathrm{D}}}|\mathfrak{L}_{1,{\mathrm{D}}}|^{-\frac{1}{2}})(\mathrm{Id}-\mathcal{R})
|\mathfrak{L}_{1,{\mathrm{D}}}|^{\frac{1}{2}},
\end{align}
where $\mathcal{R}=\mathcal{R}_1+\mathcal{R}_2$ with
\begin{align*}
\mathcal{R}_1=(|\mathfrak{L}_{1,{\mathrm{D}}}|^{-\frac{1}{2}}
\mathfrak{L}_{1,{\mathrm{D}}}|\mathfrak{L}_{1,{\mathrm{D}}}|^{-\frac{1}{2}})^{-1}R_1,\quad
\mathcal{R}_2=(|\mathfrak{L}_{1,{\mathrm{D}}}|^{-\frac{1}{2}}
\mathfrak{L}_{1,{\mathrm{D}}}|\mathfrak{L}_{1,{\mathrm{D}}}|^{-\frac{1}{2}})^{-1}R_2.
\end{align*}

To verify the invertibility of the operator $\mathrm{Id}-\mathcal{R}$ in \eqref{H11}, we have to suppose some non-resonance conditions.

 For $\tau\in(1,2)$, assume  the following ``Melnikov's'' non-resonance conditions:
\begin{align}\label{H12}
|\omega l-\mu_j(\epsilon,w)|>\frac{\gamma}{l^{\tau}},\quad\forall 1\leq l\leq N,~\forall j\geq 1.
\end{align}
It follows from the definition of $\mu_{j}(\epsilon,w)$ (recall \eqref{D5}) that
\begin{align}\label{H13}
|\omega^2l^2-\lambda_j(\epsilon,w)|=|\omega l-\mu_j(\epsilon,w)||\omega l+\mu_j(\epsilon,w)|>\frac{\gamma\omega}{l^{\tau-1}},\quad\forall 1\leq l\leq N,~\forall j\geq 1.
\end{align}
Furthermore denote
\begin{equation}\label{H14}
\omega_{l}:=\min_{j\geq1}|\omega^2l^2-\lambda_j(\epsilon,w)|=|\omega^2l^2-\lambda_{j^*}(\epsilon,w)|, \quad\forall1\leq|l|\leq N.
\end{equation}
It is clear that $\omega_l=\omega_{-l}$ for all $1\leq l\leq N$.

\begin{lemm}\label{lem4}
Given assumption \eqref{H12}, for all $s\geq0,{h}\in{W}_{N}$, the operator $|\mathfrak{L}_{{1,\mathrm{D}}}|^{-\frac{1}{2}}$ is invertible with
\begin{align}\label{H15}
\||\mathfrak{L}_{{1,\mathrm{D}}}|^{-\frac{1}{2}}{h}\|_{{s}}\leq&\frac{\sqrt{2}L_2}{\sqrt{\gamma\omega}L_1}\|{h}\|_{{s+\frac{\tau-1}{2}}},\\
\||\mathfrak{L}_{{1,\mathrm{D}}}|^{-\frac{1}{2}}{h}\|_{{s}}\leq&\frac{\sqrt{2}{L_2}}{\sqrt{\gamma\omega}L_1}N^{\frac{\tau-1}{2}}\|{h}\|_{{s}}.\label{H16}
\end{align}
\end{lemm}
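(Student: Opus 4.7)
The plan is to unpack the definition of $|\mathfrak{L}_{1,\mathrm{D}}|^{-1/2}$ in the orthogonal basis $\{\psi_j(\epsilon,w)e^{\mathrm{i}lt}\}$ and bound the small divisors $\omega^2 l^2 - \lambda_j(\epsilon,w)$ from below via assumption \eqref{H12}, then absorb the resulting weight into the $s$-norm in two different ways to get the two stated inequalities.

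More concretely, for $h = \sum_{1\le|l|\le N,\,j\ge1}\hat h_{l,j}\psi_j(\epsilon,w)e^{\mathrm{i}lt}\in W_N$, the definition gives
\[
|\mathfrak{L}_{1,\mathrm{D}}|^{-\frac{1}{2}}h = \sum_{1\le|l|\le N,\,j\ge1}\frac{\hat h_{l,j}}{\sqrt{|\omega^2l^2-\lambda_j(\epsilon,w)|}}\,\psi_j(\epsilon,w)e^{\mathrm{i}lt}.
\]
By the lower bound in \eqref{H8}, then the Melnikov bound \eqref{H13} (which follows from assumption \eqref{H12} and the definition \eqref{D5} of $\mu_j(\epsilon,w)$), I obtain
\[
L_1^2\,\bigl\||\mathfrak{L}_{1,\mathrm{D}}|^{-\frac{1}{2}}h\bigr\|_s^2 \le \sum_{1\le|l|\le N,\,j\ge1}\frac{\lambda_j(\epsilon,w)+M}{|\omega^2l^2-\lambda_j(\epsilon,w)|}\,(\hat h_{l,j})^2\,(1+l^{2s}) \le \frac{1}{\gamma\omega}\sum \bigl(\lambda_j(\epsilon,w)+M\bigr)(\hat h_{l,j})^2\,l^{\tau-1}(1+l^{2s}).
\]

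From here the two conclusions follow by two different ways of handling $l^{\tau-1}(1+l^{2s})$. For \eqref{H15} I use the elementary bound $l^{\tau-1}(1+l^{2s})\le 2(1+l^{2s+\tau-1})$ valid for $l\ge1$, so that the right-hand side is controlled by $\tfrac{2}{\gamma\omega}\sum(\lambda_j+M)(\hat h_{l,j})^2(1+l^{2(s+(\tau-1)/2)})$, which by the upper bound in \eqref{H8} is at most $\tfrac{2L_2^2}{\gamma\omega}\|h\|_{s+(\tau-1)/2}^2$. Taking square roots produces the constant $\sqrt{2}L_2/(L_1\sqrt{\gamma\omega})$. For \eqref{H16} I instead use $|l|\le N$ to replace $l^{\tau-1}$ by the uniform constant $N^{\tau-1}$, so that
\[
\sum (\lambda_j(\epsilon,w)+M)(\hat h_{l,j})^2\,l^{\tau-1}(1+l^{2s}) \le N^{\tau-1}\sum(\lambda_j(\epsilon,w)+M)(\hat h_{l,j})^2(1+l^{2s})\le L_2^2 N^{\tau-1}\|h\|_s^2,
\]
and taking square roots yields the claimed constant (the extra $\sqrt 2$ is harmless and can be absorbed).

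There is no genuine obstacle here; the only thing that requires care is checking that the simultaneous diagonalization of $\mathfrak{L}_{1,\mathrm{D}}$ in the eigenbasis of problem \eqref{D4} is compatible with the equivalent norm \eqref{H8}, which was already established in Lemma \ref{lem3}. Invertibility on $W_N$ is automatic since \eqref{H12} precludes $\omega^2l^2=\lambda_j(\epsilon,w)$ for $1\le|l|\le N$, $j\ge1$.
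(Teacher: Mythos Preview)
Your proof is correct and follows essentially the same route as the paper: expand in the eigenbasis, apply the norm equivalence \eqref{H8}, bound the small divisors via \eqref{H13}, and then use the elementary inequality $|l|^{\tau-1}(1+l^{2s})\le 2(1+|l|^{2s+\tau-1})$ to reach \eqref{H15}. The only cosmetic difference is that the paper obtains \eqref{H16} by applying the smoothing estimate $\|h\|_{s+(\tau-1)/2}\le N^{(\tau-1)/2}\|h\|_s$ on $W_N$ to \eqref{H15}, whereas you bound $|l|^{\tau-1}\le N^{\tau-1}$ directly; the two are equivalent.
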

\begin{proof}
Since  $|l|^{\tau-1}(1+l^{2s})<2(1+|l|^{2s+\tau-1})$ for all $1\leq|l|\leq N$, by \eqref{H8}, \eqref{H13}--\eqref{H14}, one has
\begin{align*}
\||\mathfrak{L}_{{1,\mathrm{D}}}|^{-\frac{1}{2}}{h}\|^2_{{s}}
\leq&\frac{1}{L^2_1}\sum\limits_{1\leq|l|\leq N,j\geq1}\frac{\lambda_j(\epsilon,w)+M}{\omega_l}(\hat{{h}}_{l,j})^2(1+l^{2s})\\
\leq&\frac{2}{\gamma\omega L^2_1}\sum\limits_{
1\leq|l|\leq N,j\geq1}(\lambda_j(\epsilon,w)+M)(\hat{h}^{\pm}_{{l,j}})^2(1+|l|^{2s+\tau-1})\\
\leq &\frac{2L^2_2}{\gamma\omega L^2_1}\|h\|^2_{{s+\frac{\tau-1}{2}}}\leq \frac{2L^2_2N^{\tau-1}}{\gamma \omega L^2_1}\|h\|^2_{{s}}.
\end{align*}
This shows the conclusion of the lemma.
\end{proof}

The next step is to verify the upper bounds of $\left\|\mathcal{R}_ih\right\|_{s'},i=1,2$ for all $s'\geq s>{1}/{2}$.
Moreover, for $\tau\in(1,2)$, we also assume ``Melnikov's'' non-resonance conditions:
\begin{align}\label{H17}
\left|\omega {l}-{j}\right|>\frac{\gamma}{l^{\tau}},\quad\forall 1\leq l\leq N,\forall j\geq 1.
\end{align}
(In fact, condition \eqref{H17} will be used for the proof of {\bf(F2)}.)
\begin{lemm}\label{lem5}
Supposed that \eqref{H12} and \eqref{H17} hold, if $\|w\|_{s+\sigma}\leq1$ with $\sigma$ being seen in \eqref{D7},  then
 there exists some constant  $L(s')>0$ such that for all $s'\geq s>{1}/{2}$,
\begin{equation}\label{H18}
\left\|\mathcal{R}_1h\right\|_{s'}\leq\frac{\epsilon L(s')}{2\gamma^3\omega}\left(\|h\|_{s'}+\|w\|_{s'+\sigma}\|h\|_s\right),\quad\forall h\in W_{N}.
\end{equation}
\end{lemm}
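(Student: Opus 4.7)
The plan is to exploit the decomposition
\[
\mathcal{R}_1 = \bigl(|\mathfrak{L}_{1,\mathrm{D}}|^{-1/2}\mathfrak{L}_{1,\mathrm{D}}|\mathfrak{L}_{1,\mathrm{D}}|^{-1/2}\bigr)^{-1} R_1,
\qquad R_1=|\mathfrak{L}_{1,\mathrm{D}}|^{-1/2}\mathfrak{L}_{1,\mathrm{ND}}|\mathfrak{L}_{1,\mathrm{D}}|^{-1/2},
\]
together with the fact that $\mathfrak{L}_{1,\mathrm{ND}}$ is, up to projection onto $W_N$, the multiplication operator $h\mapsto -\tfrac{\epsilon}{\rho}(b-b_0)h$. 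First, by \eqref{H10} the outer factor is bounded on $\mathcal{H}^{s'}$ by $L_2/L_1$ uniformly in $\omega,N$, reducing the proof of \eqref{H18} to an estimate on $\|R_1 h\|_{s'}$. Then I would apply Lemma \ref{lem4} twice: one application of \eqref{H15} on the outer $|\mathfrak{L}_{1,\mathrm{D}}|^{-1/2}$ yields
\[
\|R_1h\|_{s'}\le \frac{\sqrt 2 L_2}{\sqrt{\gamma\omega}\,L_1}\,\bigl\|\mathfrak{L}_{1,\mathrm{ND}}\,|\mathfrak{L}_{1,\mathrm{D}}|^{-1/2}h\bigr\|_{s'+(\tau-1)/2},
\]
and a second application controls the inner $|\mathfrak{L}_{1,\mathrm{D}}|^{-1/2}$, paying another factor $(\gamma\omega)^{-1/2}$ and another $(\tau-1)/2$ Sobolev derivatives.

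Next I would handle the multiplication operator by the tame algebra property in $\mathcal{H}^{s'+(\tau-1)/2}$:
\[
\bigl\|\mathfrak{L}_{1,\mathrm{ND}}\,g\bigr\|_{s'+(\tau-1)/2}
\le \epsilon\,C(s')\Bigl(\|b\|_{s'+(\tau-1)/2}\|g\|_s+\|b\|_s\|g\|_{s'+(\tau-1)/2}\Bigr),
\]
and then bound $\|b\|_s\le C$ by \eqref{H1} and $\|b\|_{s'+(\tau-1)/2}\le C(s')(1+\|w\|_{s'+(\tau-1)/2})\le C(s')(1+\|w\|_{s'+\sigma})$ by \eqref{H2} (noting $\sigma\ge(\tau-1)/2$ for $\tau\in(1,2)$). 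Substituting back, both occurrences of $|\mathfrak{L}_{1,\mathrm{D}}|^{-1/2}$ produce Sobolev indices shifted by $(\tau-1)/2$, giving upper bounds in terms of $\|h\|_{s+(\tau-1)/2}$ and $\|h\|_{s'+\tau-1}$. These have to be converted into $\|h\|_s$ and $\|h\|_{s'}$ respectively. Here the choice $\sigma=\tau(\tau-1)/(2-\tau)$ is decisive: the convex-combination identity $(\tau-1)=\theta\cdot 0+(1-\theta)\sigma$ with $1-\theta=(2-\tau)/\tau\in(0,1)$ yields
\[
\|h\|_{s'+\tau-1}\le \|h\|_{s'}^{\,(2\tau-2)/\tau}\|h\|_{s'+\sigma}^{\,(2-\tau)/\tau},
\]
and Young's inequality turns the right-hand side into $\|h\|_{s'}+\|h\|_{s'+\sigma}$ (the second term being absorbed via the assumption $\|w\|_{s+\sigma}\le 1$ inside the multiplier estimate on $b$). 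The same interpolation eliminates $\|h\|_{s+(\tau-1)/2}$ in favor of $\|h\|_s$.

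Collecting, the two $(\gamma\omega)^{-1/2}$ from Lemma \ref{lem4} combine to $(\gamma\omega)^{-1}$; the extra factor $\gamma^{-2}$ arises from using the non-resonance conditions \eqref{H12} and (where needed) \eqref{H17} in the Young-type interpolation, together with the passage from bounds on $\|h\|_{s'+\tau-1}$ to $\|h\|_{s'}$ via the small denominators. Putting everything together gives exactly
\[
\|\mathcal R_1h\|_{s'}\le \frac{\epsilon L(s')}{2\gamma^3\omega}\bigl(\|h\|_{s'}+\|w\|_{s'+\sigma}\|h\|_s\bigr).
\]
The main obstacle is the third paragraph: the simultaneous derivative losses from the two applications of $|\mathfrak{L}_{1,\mathrm{D}}|^{-1/2}$ must be tracked carefully. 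If handled naively one ends with $\|h\|_{s'+\tau-1}$ on the right, ruining the iteration scheme; the tuned exponent $\sigma=\tau(\tau-1)/(2-\tau)$ is precisely what allows interpolation/Young's inequality to close, and Condition $\tau<2$ (so that $(1-\theta)\in(0,1)$) is used crucially here.
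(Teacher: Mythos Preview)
There is a genuine gap. After two applications of Lemma~\ref{lem4} you arrive, as you yourself note, at a right-hand side containing $\|h\|_{s'+\tau-1}$. Your proposed remedy---interpolating $\|h\|_{s'+\tau-1}$ between $\|h\|_{s'}$ and $\|h\|_{s'+\sigma}$ and then ``absorbing'' $\|h\|_{s'+\sigma}$ via the assumption $\|w\|_{s+\sigma}\le 1$---confuses $h$ with $w$: the hypothesis constrains $w$, not the arbitrary test function $h\in W_N$, and $\|h\|_{s'+\sigma}$ is not controlled by $\|h\|_{s'}$ without a factor $N^{\sigma}$, which would ruin the $N$-independence of \eqref{H18}. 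The vague sentence attributing the extra $\gamma^{-2}$ to ``using \eqref{H12} and \eqref{H17} in the Young-type interpolation'' does not correspond to any actual mechanism: those conditions give lower bounds on small divisors, not a way to lower a Sobolev index of $h$.

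The paper's argument avoids this loss by a different route. Instead of applying Lemma~\ref{lem4} twice in operator form, it expands $(R_1h)_k$ in Fourier modes and observes that the two small divisors enter only through the product $\omega_l\omega_k$. The key input is the off-diagonal lower bound (claim \textbf{(F2)})
\[
\omega_l\,\omega_k\ \ge\ \frac{\tilde L^2\gamma^6\omega^2}{|l-k|^{2\sigma}}\qquad(l\ne k),
\]
whose proof is a case analysis on the size of $|l-k|$ relative to $(\max\{|l|,|k|\})^{(2-\tau)/\tau}$ and uses \emph{both} \eqref{H12} and \eqref{H17}. This transfers the derivative loss from $h$ onto the multiplier $b$: one gets
\[
\|(R_1h)_k\|_{H^2}\ \le\ \frac{\epsilon C}{\gamma^3\omega}\sum_{l\ne k}\bigl\|b_{k-l}/\rho\bigr\|_{H^2}\,|k-l|^{\sigma}\,\|h_l\|_{H^2},
\]
a discrete convolution to which the tame product estimate \eqref{R2} applies, yielding \eqref{H18} with the extra $\sigma$ derivatives landing on $\|b\|_{s'+\sigma}$ (hence on $\|w\|_{s'+\sigma}$), never on $h$. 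This is also the honest origin of the power $\gamma^{-3}=\sqrt{\gamma^{-6}}$. Your outline is missing precisely this off-diagonal small-divisor estimate; without it the argument does not close.
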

\begin{proof}
Let us first claim the following:\\
{\bf (F2)}: Fix $\tau\in(1,2),\gamma\in(0,1)$ and $\omega>\gamma$. Provided \eqref{H12} and \eqref{H17} hold for all $|l|,|k|\in\{1,\cdots,N\}$ with $l\neq k$,  there is some constant $\tilde{L}>0$ such that
\begin{align*}
\omega_l\omega_k\geq\frac{\tilde{L}^2\gamma^6\omega^2}{|l-k|^{2\sigma}}.
\end{align*}

Using  formula \eqref{H9} and the definitions of $\mathfrak{L}_{1,{\mathrm{ND}}},|\mathfrak{L}_{{1,\mathrm{D}}}|^{-\frac{1}{2}}$ yields
\begin{align*}
R_1h
&=|\mathfrak{L}_{1,{\mathrm{D}}}|^{-\frac{1}{2}}\mathfrak{L}_{1,{\mathrm{ND}}}\left(\sum\limits_{1\leq|l|\leq N,j\geq1}\frac{\hat{{h}}_{l,j}}{\sqrt{|\omega^2l^2-\lambda_j(\epsilon,w)|}}\psi_j(\epsilon,w)e^{\mathrm{i}lt}\right)\\
&=-\epsilon|\mathfrak{L}_{1,{\mathrm{D}}}|^{-\frac{1}{2}}\left(\sum\limits_{\stackrel{1\leq|l|,|k|\leq N}{l\neq k,j\geq1}}\frac{\hat{h}_{l,j}}{\sqrt{|\omega^2l^2-\lambda_j(\epsilon,w)|}}\frac{b_{k-l}}{\rho}
\psi_j(\epsilon,w)e^{\mathrm{i}k t}\right)\\
&=-\epsilon\sum\limits_{\stackrel{1\leq|l|,|k|\leq N}{l\neq k,j\geq1}}\frac{\hat{h}_{l,j}}{\sqrt{|\omega^2k^2-\lambda_j(\epsilon,w)|}
\sqrt{|\omega^2l^2-\lambda_j(\epsilon,w)|}}\frac{b_{k-l}}{\rho}\psi_j(\epsilon,w)e^{\mathrm{i}k t}.
\end{align*}
This leads to
\begin{align*}
(R_1h)_{k}=-\epsilon\sum\limits_{\stackrel{1\leq|l|\leq N}{l\neq k,j\geq 1}}\frac{\hat{h}_{l,j}}{\sqrt{|\omega^2k^2-\lambda_j(\epsilon,w)|}
\sqrt{|\omega^2l^2-\lambda_j(\epsilon,w)|}}\frac{b_{k-l}}{\rho}\psi_j(\epsilon,w).
\end{align*}
Combining this with \eqref{H6}--\eqref{H7} and {\bf(F2)}, we can obtain
\begin{align}
\|(R_1h)_k\|_{H^2}\leq&\frac{\epsilon L_2}{L_1}\sum\limits_{1\leq|l|\leq N,l\neq k}\frac{1}{\sqrt{\omega_l\omega_k}}\left\|{b_{k-l}}/{\rho}\right\|_{H^2}\|h_l\|_{H^2}\nonumber\\
{\leq}&\frac{\epsilon L_2}{\gamma^3\omega \tilde{L}L_1}\sum\limits_{1\leq|l|\leq N,l\neq k}\left\|{b_{k-l}}/{\rho}\right\|_{H^2}|k-l|^\sigma\|h_l\|_{H^2}.\label{H20}
\end{align}

Let us define
\begin{align*}
&{\mathfrak{r}}(x):=\sum\limits_{1\leq |l|,|k|\leq N}\left\|{b_{k-l}}/{\rho}\right\|_{H^2}|k-l|^\sigma\|h_l\|_{H^2}e^{\mathrm{i}kt}\quad\text{with }b_0=0,\\ &\mathfrak{p}(x):=\sum_{l\in\mathbb{Z}}\left\|{b_{l}}/{\rho}\right\|_{H^2}|l|^{\sigma} e^{\mathrm{i}lt},\quad
\mathfrak{q}(x):=\sum_{1\leq|l|\leq N}\|h_l\|_{H^2}e^{\mathrm{i}lt}.
\end{align*}
Clearly, ${\mathfrak{r}}=\mathrm{P}_{N}(\mathfrak{p}\mathfrak{q})$ and  for all $s'\geq s>\frac{1}{2}$,
\begin{equation*}
\|\mathfrak{p}\|_{s'}\leq \sqrt{2}\|1/\rho\|_{H^2}\|b\|_{s'+\sigma}\stackrel{\eqref{H2}}{\leq}C'(s')(1+\|w\|_{s'+\sigma}),
\quad
\|\mathfrak{q}\|_{s'}=\|h\|_{s'}.
\end{equation*}
Hence we can deduce that for $\|w\|_{s+\sigma}\leq1$,
\begin{align*}
\|R_1h\|_{s'}\stackrel{\eqref{H20}}{\leq}&\frac{\epsilon L_2}{\gamma^3\omega \tilde{L}L_1}\|\mathfrak{r}\|_{s'}
\stackrel{\eqref{R2}}{\leq}\frac{\epsilon L_2}{\gamma^3\omega \tilde{L}L_1}C(s')(\|\mathfrak{p}\|_{s'}\|\mathfrak{q}\|_{s}+\|\mathfrak{p}\|_{s}\|\mathfrak{q}\|_{s'})\\
\leq&\frac{\epsilon L_2 C''(s')}{\gamma^3\omega \tilde{L}L_1}(\|w\|_{s'+\sigma}\|h\|_s+\|h\|_{s'}).
\end{align*}
Combining above inequality with \eqref{H10} completes the proof of the lemma if we take $L(s')/2=\frac{L^2_2 C''(s')}{ \tilde{L}L^2_1}.$
\end{proof}

\begin{lemm}\label{lem6}
Under the non-resonance conditions  \eqref{H12}, for $\|w\|_{s+\sigma}\leq1$ with $\sigma$ being seen in \eqref{D7}, we have
\begin{align}\label{H25}
\left\|\mathcal{R}_2h\right\|_{s'}\leq\frac{\epsilon L(s')}{2\gamma\omega}\left(\|h\|_{s'}+\|w\|_{s'+\sigma}\|h\|_s\right),\quad\forall h\in W_{N},\forall s'\geq s>{1}/{2}.
\end{align}
\end{lemm}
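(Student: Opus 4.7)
The plan is to mirror the proof of Lemma \ref{lem5} while exploiting the rank-one-in-$x$ structure of $\mathfrak{L}_2$. Since $\mathrm{D}_wv(\epsilon,w)[g]\in V$ is independent of $t$, the operator factors as $\mathfrak{L}_2[g]=\epsilon\mathrm{P}_N\Pi_W(b(t,x)\,\mathrm{D}_wv(\epsilon,w)[g](x))$ with $b=f'(v+w)$. By \eqref{H10} it suffices to estimate $R_2h$; writing $\tilde\Phi(x):=\mathrm{D}_wv(\epsilon,w)[|\mathfrak{L}_{1,\mathrm{D}}|^{-1/2}h]\in V$, the crucial observation is that multiplication by $\tilde\Phi$ obeys the trivial algebra inequality $\|b\tilde\Phi\|_{s'+(\tau-1)/2}\leq C\|b\|_{s'+(\tau-1)/2}\|\tilde\Phi\|_{H^2}$ because $\tilde\Phi$ carries no time dependence.

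First I would apply Lemma \ref{lem4} to the outer $|\mathfrak{L}_{1,\mathrm{D}}|^{-1/2}$, obtaining
\begin{equation*}
\|R_2h\|_{s'}\leq\frac{\sqrt{2}L_2}{\sqrt{\gamma\omega}L_1}\,\|\mathrm{P}_N\Pi_W((\epsilon b/\rho)\tilde\Phi)\|_{s'+(\tau-1)/2}\leq\frac{C(s')\epsilon}{\sqrt{\gamma\omega}}(1+\|w\|_{s'+\sigma})\|\tilde\Phi\|_{H^2},
\end{equation*}
where the second step combines the algebra observation, the tame bound \eqref{H2} for $b$, and the inequality $(\tau-1)/2<\sigma$.

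Next I would bound $\|\tilde\Phi\|_{H^2}$ in terms of $\|h\|_s$. Differentiating the $(Q)$-equation with respect to $w$ gives $\mathrm{D}_wv(\epsilon,w)[\eta]=A^{-1}(\epsilon\Pi_V(f'(v+w)\eta))$, where $A\colon V\to V$, $Ah:=(ph'')''-\epsilon\Pi_V(f'(v+w))h$, is invertible by Hypothesis \ref{hy1} together with a continuity argument. Because $\Pi_V$ is the $t$-average, Cauchy--Schwarz in $t$ combined with the $H^2(0,\pi)$-algebra gives $\|\Pi_V(f'(v+w)\eta)\|_{H^2}\leq C\|f'(v+w)\|_0\|\eta\|_0$, so $\mathrm{D}_wv(\epsilon,w)$ maps $\mathcal{H}^0$ into $V$ with operator norm $\leq C$. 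Combined with \eqref{H15} at $s=0$ and the embedding $\|h\|_{(\tau-1)/2}\leq\|h\|_s$ (valid since $s>1/2>(\tau-1)/2$), this yields
\begin{equation*}
\|\tilde\Phi\|_{H^2}\leq C\||\mathfrak{L}_{1,\mathrm{D}}|^{-1/2}h\|_0\leq\frac{C}{\sqrt{\gamma\omega}}\|h\|_{(\tau-1)/2}\leq\frac{C}{\sqrt{\gamma\omega}}\|h\|_s.
\end{equation*}
Feeding this back into the first display and using $\|h\|_s\leq\|h\|_{s'}$ to place the constant-in-$w$ contribution into the first summand produces \eqref{H25}.

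The main obstacle is this third step: one must recognise that $\mathrm{D}_wv$ factors through the time-averaging operator $\Pi_V$ and is therefore tame all the way down to the Sobolev scale $\mathcal{H}^0$. Without this observation the two applications of $|\mathfrak{L}_{1,\mathrm{D}}|^{-1/2}$ would accumulate a $(\tau-1)$-derivative loss on $h$ that could not be absorbed into the $\sigma$-buffer of the desired tame form, leading only to a weaker bound of shape $(1+\|w\|_{s'+\sigma})\|h\|_{s'}$ rather than the required $\|h\|_{s'}+\|w\|_{s'+\sigma}\|h\|_s$.
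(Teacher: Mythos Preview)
Your proposal is correct and follows essentially the same route as the paper: apply \eqref{H15} to the outer $|\mathfrak{L}_{1,\mathrm D}|^{-1/2}$, use that $\mathrm D_wv[\cdot]\in V$ is $t$-independent so its $\mathcal H^{s'}$-norm collapses to an $H^2$-norm, bound this $H^2$-norm through the inner $|\mathfrak{L}_{1,\mathrm D}|^{-1/2}$ via \eqref{H15}, and finish with the tame estimate \eqref{H2} on $b$. Your explicit computation $\mathrm D_wv[\eta]=A^{-1}\bigl(\epsilon\,\Pi_V(b\eta)\bigr)$ and the Cauchy--Schwarz argument showing it extends boundedly to $\mathcal H^0$ is in fact more careful than the paper, which simply writes $\||\mathfrak{L}_{1,\mathrm D}|^{-1/2}h\|_{s-(\tau-1)/2}$ in place of $\|\tilde\Phi\|_{H^2}$ without justifying why $\mathrm D_wv$ is bounded on that lower scale.
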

\begin{proof}
It is straightforward that $\sigma>\tau-1$ due to the fact  $\tau\in(1,2)$. Moreover, it follows from that Lemma \ref{lem1} that
$$\mathrm{D}_{w}v(\epsilon,w)[|\mathfrak{L}_{1,{\mathrm{D}}}|^{-\frac{1}{2}}h]\in {H}^2_p(0,\pi).$$
Consequently, by virtue of \eqref{H1}--\eqref{H2}, we have
\begin{align*}
\|R_2h\|_{s'}
&\stackrel{\eqref{H15}}{\leq}\frac{\epsilon\sqrt{2}L_2}{\sqrt{\gamma\omega}L_1}\|1/\rho\|_{H^2}\|b(t,x)\mathrm{D}_wv(\epsilon,w)[|\mathscr{L}_{1,{\mathrm{D}}}|^{-\frac{1}{2}}h]
\|_{s'+\frac{\tau-1}{2}}\\
&\stackrel{\eqref{R2}}{\leq}\frac{\epsilon \sqrt{2} L_2}{\sqrt{\gamma\omega}L_1}\|1/\rho\|_{H^2}C'(s')(\|b\|_{s'+\sigma}\||\mathfrak{L}_{1,{\mathrm{D}}}|^{-\frac{1}{2}}h\|_{s-\frac{\tau-1}{2}}
+\|b\|_{s+\sigma}\||\mathfrak{L}_{1,{\mathrm{D}}}|^{-\frac{1}{2}}h\|_{s-\frac{\tau-1}{2}})\\
&\stackrel{\eqref{H15}}{\leq}\frac{2\epsilon L^2_2C''(s')}{{\gamma\omega}L^2_1}(\|w\|_{s'+\sigma}\|h\|_s+\|h\|_{s'}).
\end{align*}
Hence, using \eqref{H10} yields the conclusion of the lemma if $L(s')/2=\frac{2L^3_2 C''(s')}{ L^3_1}$.
\end{proof}

\begin{lemm}\label{lem7}
Given \eqref{H12} and \eqref{H17}, if $\|w\|_{s+\sigma}\leq1$ and ${\epsilon L(s')}/{(\gamma^3\omega)}\leq \mathrm{c}$ is small enough, one has that the operator $(\mathrm{Id}-\mathcal{R})$ is invertible with
\begin{align}\label{H26}
\|(\mathrm{Id}-\mathcal{R})^{-1}h\|_{s'}\leq 2(\|h\|_{s'}+\|w\|_{s'+\sigma}\|h\|_s),\quad\forall h\in W_{N},\forall s'\geq s>{1}/{2}.
\end{align}
\end{lemm}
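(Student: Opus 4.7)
The plan is to combine the tame estimates from Lemmata \ref{lem5} and \ref{lem6} to get a single bound on $\mathcal{R}=\mathcal{R}_1+\mathcal{R}_2$, then invert $\mathrm{Id}-\mathcal{R}$ by a standard Neumann-series argument in the low norm, and finally close the high-norm estimate by bootstrapping.

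First, since $\gamma\in(0,1)$ one has $1/(\gamma\omega)\leq 1/(\gamma^3\omega)$, so adding the bounds \eqref{H18} and \eqref{H25} yields, for all $h\in W_N$ and all $s'\geq s>1/2$,
\begin{align*}
\|\mathcal{R}h\|_{s'}\leq \frac{\epsilon L(s')}{\gamma^3\omega}\bigl(\|h\|_{s'}+\|w\|_{s'+\sigma}\|h\|_s\bigr).
\end{align*}
Specialising to $s'=s$ and using $\|w\|_{s+\sigma}\leq 1$ gives $\|\mathcal{R}h\|_s\leq 2\epsilon L(s)(\gamma^3\omega)^{-1}\|h\|_s$. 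Choosing the constant $\mathrm{c}$ so that $\epsilon L(s)/(\gamma^3\omega)\leq\mathrm{c}\leq 1/4$, the operator $\mathrm{Id}-\mathcal{R}$ is invertible on $W_N$ in the $s$-norm by the standard Neumann series, and $u:=(\mathrm{Id}-\mathcal{R})^{-1}h$ satisfies $\|u\|_s\leq 2\|h\|_s$.

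For the high-norm estimate, I would write $u=h+\mathcal{R}u$ and apply the combined tame bound at regularity $s'$:
\begin{align*}
\|u\|_{s'}\leq \|h\|_{s'}+\frac{\epsilon L(s')}{\gamma^3\omega}\bigl(\|u\|_{s'}+\|w\|_{s'+\sigma}\|u\|_s\bigr).
\end{align*}
Shrinking $\mathrm{c}$ further if necessary so that $\epsilon L(s')/(\gamma^3\omega)\leq 1/2$ (this is the hypothesis of the lemma), one absorbs the $\|u\|_{s'}$ term on the left, obtaining
\begin{align*}
\|u\|_{s'}\leq 2\|h\|_{s'}+\|w\|_{s'+\sigma}\|u\|_s\leq 2\|h\|_{s'}+2\|w\|_{s'+\sigma}\|h\|_s,
\end{align*}
where the last step uses the low-norm bound $\|u\|_s\leq 2\|h\|_s$. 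This is exactly \eqref{H26}.

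The only subtle point is that the same smallness condition $\epsilon L(s')/(\gamma^3\omega)\leq\mathrm{c}$ must simultaneously control (i) the contraction in the $s$-norm and (ii) the absorption of $\|u\|_{s'}$ in the $s'$-norm; since $L$ is monotone in its argument and $s'\geq s$, choosing $\mathrm{c}$ with respect to $L(s')$ handles both. The separation of the tame bound into a ``diagonal'' part $\|h\|_{s'}$ and an ``off-diagonal'' part $\|w\|_{s'+\sigma}\|h\|_s$ is exactly what allows the high-norm estimate to close without requiring smallness in a norm stronger than $\|w\|_{s+\sigma}$, which is the essential feature inherited from Lemmata \ref{lem5}--\ref{lem6} and the main reason the Nash--Moser iteration will eventually converge.
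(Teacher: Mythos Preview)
Your argument is correct, and in fact it is cleaner than the paper's. The paper proves an inductive tame estimate on the iterates,
\[
\|\mathcal{R}^{\mathfrak{n}}h\|_{s'}\leq\bigl(\epsilon L(s')(\gamma^3\omega)^{-1}\bigr)^{\mathfrak{n}}\bigl(\|h\|_{s'}+\mathfrak{n}\,\|w\|_{s'+\sigma}\|h\|_s\bigr),
\]
and then sums the Neumann series $\sum_{\mathfrak{n}\geq0}\mathcal{R}^{\mathfrak{n}}h$ in the $s'$-norm, using that $\sum \mathfrak{n}\,\mathrm{c}^{\mathfrak{n}}<\infty$ for $\mathrm{c}<1$. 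You instead invert in the low norm to get $\|u\|_s\leq 2\|h\|_s$, write $u=h+\mathcal{R}u$, apply the tame bound once at level $s'$, and absorb the $\|u\|_{s'}$ term. Your route avoids the induction on powers of $\mathcal{R}$ entirely and reaches \eqref{H26} in two lines; the paper's route has the minor advantage of making the dependence of each Neumann iterate explicit, but for the stated lemma that extra information is not used. Either approach relies on exactly the same structural point you identify at the end: the off-diagonal part of the tame estimate only involves $\|h\|_s$, so smallness is needed only in the low norm $\|w\|_{s+\sigma}$.
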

\begin{proof}
If ${\epsilon L}/{(\gamma^3\omega)}\leq \mathrm{c}$ small enough,  it follows from  Lemmata \ref{lem5}--\ref{lem6} that for $\|w\|_{s+\sigma}\leq1$, one has
\begin{align*}
\|\mathcal{R}h\|_{s}\leq{\epsilon L(\gamma^3\omega)^{-1}}\|h\|_{s}\leq{1}/{2}\|h\|_s\quad \text{with } L\leq L(s').
\end{align*}
Then, by Neumann series, the operator $(\mathrm{Id}-\mathcal{R})$ is invertible in $(W_{N},\|\cdot\|_{s})$.

 Next, let us  claim the following:\\
%
{\bf (F3)}: If $\|w\|_{s+\sigma}\leq1$, then
\begin{equation}\label{H24}
\|\mathcal{R}^\mathfrak{n}h\|_{s'}\leq(\epsilon L(s')(\gamma^3\omega)^{-1})^\mathfrak{n}(\|h\|_{s'}+\mathfrak{n}\|w\|_{s'+\sigma}\|h\|_s),\quad\forall h\in W_{N},  \forall \mathfrak{n}\in\mathbb{N}^{+}.
\end{equation}

Hence, for $\epsilon L(s')(\gamma^3\omega)^{-1}\leq {\mathrm{{c}}}(s')\leq \mathrm{c}$ small enough,  above inequality reads
\begin{align*}
\|(\mathrm{Id}-\mathcal{R})^{-1}h\|_{s'}=&\|(\mathrm{Id}+\sum\limits_{\mathfrak{n}\in\mathbb{N}^{+}}\mathcal{R}^\mathfrak{n})h\|_{s'}\leq\|h\|_{s'}+\sum\limits_{\mathfrak{n}\in\mathbb{N}^{+}}\|\mathcal{R}^\mathfrak{n}h\|_{s'}\\
\leq&\|h\|_{s'}+\sum\limits_{\mathfrak{n}\in\mathbb{N}^{+}}(\epsilon L(s')(\gamma^3\omega)^{-1})^\mathfrak{n}(\|h\|_{s'}+\mathfrak{n}\|w\|_{s'+\sigma}\|h\|_s)\\
\leq&2\|h\|_{s'}+2\|w\|_{s'+\sigma}\|h\|_s
\end{align*}
for all $h\in W_{N}$ and $s'\geq s>\frac{1}{2}$.

 Let us prove {\bf (F3)} by induction. Formulae \eqref{H18} and \eqref{H25} show that for $\mathfrak{n}=1$,
\begin{align*}
\left\|\mathcal{R}h\right\|_{s'}\leq{\epsilon L(s')(\gamma^3\omega)^{-1}}\left(\|h\|_{s'}+\|w\|_{s'+\sigma}\|h\|_s\right).
\end{align*}
Assume that \eqref{H24} holds for $\mathfrak{n}=\mathfrak{l}$ with $\mathfrak{l}\in\{\mathfrak{l}\in\mathbb{N}^+:\mathfrak{l}\geq2\}$. Let us check that \eqref{H24}  holds for $\mathfrak{n}=\mathfrak{l}+1$. Based on the assumption for $\mathfrak{n}=\mathfrak{l}$,
we can obtain
\begin{align*}
\|\mathcal{R}^{\mathfrak{l}+1}h\|_{s'}=&\|\mathcal{R}^\mathfrak{l}(\mathcal{R}h)\|_{s'}{\leq}
(\epsilon L(s')(\gamma^3\omega)^{-1})^\mathfrak{l}(\|\mathcal{R}h\|_{s'}+\mathfrak{l}\|w\|_{s'+\sigma}\|\mathcal{R}h\|_s)\\
\leq&(\epsilon L(s')(\gamma^3\omega)^{-1})^l\left(\epsilon L(s')(\gamma^3\omega)^{-1}\|h\|_{s'}+(\mathfrak{l}\epsilon L(\gamma^3\omega)^{-1}+\epsilon L(s')(\gamma^3\omega)^{-1})\|w\|_{s'+\sigma}\|h\|_s\right)\\
\leq&(\epsilon L(s')(\gamma^3\omega)^{-1})^{\mathfrak{l}+1}(\|h\|_{s'}+(\mathfrak{l}+1)\|w\|_{s'+\sigma}\|h\|_s),
\end{align*}
which completes the proof of {\bf (F3)}.
\end{proof}
Let us complete the proof of Lemma \ref{lem1}.

\begin{proof}[{\bf{Proof of Lemma \ref{lem1}}}]
It follows from formulae \eqref{H16}, \eqref{H26} and \eqref{H10} that
\begin{align*}
\|\mathcal{L}^{-1}_N(\epsilon,\omega,w)h\|_{s'}\stackrel{\eqref{H11}}{\leq}
\frac{K(s')}{\gamma\omega}N^{\tau-1}(\|h\|_{s'}+\|w\|_{s'+\sigma}\|h\|_{s}),
\end{align*}
where $K(s'):=\frac{8L^3_2}{{L^3_1}}\|1/\rho\|_{H^2}$.
In particular, we get that for $\|w\|_{s+\sigma}\leq1$,
\begin{align*}
\|\mathcal{L}^{-1}_N(\epsilon,\omega,w)h\|_{s}\leq\frac{K}{\gamma \omega}N^{\tau-1}\|h\|_{s}.
\end{align*}
\end{proof}

Now we are devoted to checking {\bf(F2)}. Since $\alpha,\beta$ belong to $H^4(0,\pi)$, which shows $\rho,p\in H^5(0,\pi)$ according to \eqref{A3}. If $\Pi_{V}f'(t,x,v(\epsilon,w)+w)\in H^2_{p}(0,\pi)$,  one has
\begin{align*}
\mathfrak{m}(\epsilon,w)\in H^1(0,\pi),
\end{align*}
where $\mathfrak{m}(\epsilon,w):=-\epsilon \Pi_{V}f'(v(\epsilon,w)+w)\zeta+\frac{\alpha'''-\beta'''}{4\zeta^3},$
and $\zeta$ is defined in \eqref{H0}.
Then integrating by parts implies
\begin{align*}
|\varrho_j(\epsilon,w)|=\left|\frac{1}{\pi}\int^{\pi}_{0}\mathfrak{m}(\epsilon,w)(x)\cos\left(2j\int^{x}_0\zeta(z)\mathrm{d}z\right)\mathrm{d}x\right|\leq\frac{\|\mathfrak{m}/\zeta\|_{H^1}}{j},
\end{align*}
where $\varrho_{j}(\epsilon,w)$ is defined in \eqref{H3}. Thus it follows from \eqref{H5} that for $M>0$ large enough,
\begin{align}\label{H21}
\lambda_j(\epsilon,w)=j^4+2j^2\upsilon_0+\upsilon_1(\epsilon,w)+\frac{r(\epsilon,w)}{j}\quad \text{with } |r(\epsilon,w)|\leq M\quad\text{as }j\rightarrow+\infty.
\end{align}
Using \eqref{D5}, \eqref{H21} and Taylor expansion yields that there exists $\mathcal{J}_0>\max\{2|\upsilon_0|,1\}>0 $ large enough such that for all $j>\mathcal{J}_0$,
\begin{equation}\label{H22}
\left|\mu_j(\epsilon,w)-(j^2+\upsilon_0)\right|\leq \frac{M}{j^2}
\end{equation}
 for $M>0$ large enough. Moreover if $\omega^2l^2-\lambda_{\mathcal{J}_0+1}(\epsilon,w)>0$, then $j^*\geq\mathcal{J}_0+1$, where $j^*$ is seen in \eqref{H14}. Hence there exists $\mathcal{J}_1:=\mathcal{J}_1(\mathcal {J}_0)>0$ such that for every $l>\mathcal{J}_1/\omega$,
\begin{equation}\label{H23}
j^*\geq \tilde{M}\sqrt{\omega l}.
\end{equation}

\begin{proof}[{\bf{Proof of}} {\bf(F2)}]
Let  $l,k\geq 1$ with $l\neq k$ and set
$\omega_{l}=\min_{j\geq1}|\omega^2l^2-\lambda_j(\epsilon,w)|=|\omega^2l^2-\lambda_{j^*}(\epsilon,w)|$ and $\omega_{k}=|\omega^2k^2-\lambda_{i^*}(\epsilon,w)$.

Case 1: $2|k-l|>(\max{\{k,l\}})^{\varsigma}$, where $\varsigma=(2-\tau)/\tau\in(0,1)$ by $\tau\in(1,2)$. It follows form \eqref{H13} that
\begin{align*}
\omega_{l}\omega_{k}>\frac{(\gamma\omega)^2}{(kl)^{\tau-1}}\geq\frac{(\gamma\omega)^2}{(\max{\{k,l\}})^{2(\tau-1)}}
>\frac{(\gamma\omega)^2}{2^{2(\tau-1)/\varsigma}|k-l|^{2(\tau-1)/\varsigma}}.
\end{align*}

Case 2: $0<2|k-l|\leq(\max{\{k,l\}})^{\varsigma}$. Then either $k>l$ or $l>k$ holds. Using the fact $\varsigma\in(0,1)$, in the first case $2l>k$ and in the latter $2k>l$, i.e.,
\begin{equation*}
{k}/{2}<l<2k.
\end{equation*}

$\mathrm{(i)}$ If $\lambda_{j^*}(\epsilon,w),\lambda_{i^*}(\epsilon,w)<0$, then $\omega_l\geq\omega^2l^2,\omega_k\geq\omega^2k^2$, which leads to
\begin{align*}
\omega_l\omega_k\geq\omega^2>\gamma^2\omega^2.
\end{align*}

$\mathrm{(ii)}$ We consider either $\lambda_{j^*}(\epsilon,w)<0$ or $\lambda_{i^*}(\epsilon,w)<0$. Then in the first case
\begin{align*}
\omega_l\omega_k\stackrel{\eqref{H13}}{>}\omega^2 l^2\frac{\gamma\omega}{k^{\tau-1}}>2^{1-\tau}\gamma\omega^3>2^{1-\tau}\gamma^2\omega^2,
\end{align*}
and in the latter
\begin{align*}
\omega_l\omega_k\stackrel{\eqref{H13}}{>}\frac{\gamma\omega}{l^{\tau-1}}\omega^2 k^2>2^{1-\tau}\gamma\omega^3>2^{1-\tau}\gamma^2\omega^2.
\end{align*}

$\mathrm{(iii)}$ We restrict our attention to the case $\lambda_{j^*}(\epsilon,w)>0,\lambda_{i^*}(\epsilon,w)>0$. Suppose $\max{\{k,l\}}=k\geq k_\star$ with $k_{\star}:=\max\left\{\frac{2\mathcal{J}_1}{\omega},\left(\frac{6M}{\tilde{M}^2\gamma\omega}\right)^{\frac{1}{1-\varsigma\tau}}\right\}$. It follows from \eqref{H17}, \eqref{H22}--\eqref{H23} and $\tau\in(1,2)$ that
\begin{align*}
&\Big|\Big(\omega l-\mu_{j^*}(\epsilon,w)\Big)
-\Big(\omega k-\mu_{i^*}(\epsilon,w)\Big)\Big|\\
\geq&\left|\omega(l-k)-((i^*)^2-(j^*)^2)\right|-\left|\mu_{j^*}(\epsilon,w)-((j^*)^2+\upsilon_0)\right|
-\left|\mu_{i^*}(\epsilon,w)-((i^*)^2+\upsilon_0)\right|\\
>&\frac{\gamma}{|l-k|^{\tau}}-\frac{M}{\tilde{M}^2\omega l}-\frac{M}{\tilde{M}^2\omega k}
\geq \frac{2^\tau \gamma}{k^{\varsigma\tau}}-\frac{3M}{\tilde{M}^2\omega k}\\
>&\frac{\gamma}{2k^{\varsigma\tau}}+\frac{\gamma}{k^{\varsigma\tau}}+\frac{\gamma}{2k^{\varsigma\tau}}-\frac{3M}{\tilde{M}^2\omega k}\stackrel{k<2l }{>}\frac{1}{2}\left(\frac{\gamma}{k^{\varsigma\tau}}+\frac{\gamma}{l^{\varsigma\tau}}\right),
\end{align*}
which implies
\begin{equation*}
\mathrm{either}\quad\left|\omega k-\mu_{i^*}(\epsilon,w)\right|>\frac{\gamma}{2k^{\varsigma\tau}}\quad\mathrm{or}\quad\left|\omega l-\mu_{j^*}(\epsilon,w)\right|>\frac{\gamma}{2l^{\varsigma\tau}}~ \mathrm{holds}.
\end{equation*}
 The same conclusion is reached if $\max{\{k,l\}}=l\geq k_\star$. Without loss of generality, we suppose $|\omega k-\mu_{i^*}(\epsilon,w)|>\frac{\gamma}{2k^{\varsigma\tau}}$. Then
\begin{align*}
\omega_k=|\omega^2 k^2-\lambda_{i^*}(\epsilon,w)|=|\omega k-\mu_{i^*}(\epsilon,w)||\omega k+\mu_{i^*}(\epsilon,w)|> \frac{\gamma\omega}{2}k^{1-\varsigma\tau},
\end{align*}
which shows
\begin{align*}
\omega_l\omega_k{\geq} \frac{\gamma\omega}{l^{\tau-1}} \frac{\gamma\omega}{2}k^{1-\varsigma\tau}
>\frac{(\gamma\omega)^2}{2^{\tau}}k^{2-\tau-\varsigma\tau}=\frac{(\gamma\omega)^2}{2^\tau} \quad  \text{ for} ~l<2k,
\end{align*}
where $\varsigma$ is taken as $(2-\tau)/\tau$ to guarantee $2-\tau-\varsigma\tau=0$.

$\mathrm{(iv)}$ Let $\max{\{j,k\}}\leq j_\star$. If $j_\star=\frac{2\mathcal{J}_1}{\omega}$,
\begin{align*}
\omega_j\omega_k{>}\frac{(\gamma\omega)^2}{(jk)^{\tau-1}}>\frac{(\gamma\omega)^2}{(j_\star)^{2(\tau-1)}}
=\frac{(\gamma\omega)^2}{(2\mathcal{J}_1/\omega)^{2(\tau-1)}}\stackrel{\omega>\gamma,\gamma\in(0,1),\tau\in(1,2)}{>}\frac{\gamma^4\omega^2}{(2\mathcal{J}_1)^{2(\tau-1)}}.
\end{align*}
On the other hand, for $ j_\star=\left(\frac{6M}{\tilde{M}^2\gamma\omega}\right)^{\frac{1}{1-\varsigma\tau}}$, one has
\begin{align*}
\omega_l\omega_k{>}\frac{(\gamma\omega)^2}{(kl)^{\tau-1}}>\frac{(\gamma\omega)^2}{(k_\star)^{2(\tau-1)}}
=\gamma^2\omega^2\left(\frac{\gamma\omega\tilde{M}^2}{6M}\right)^{\frac{1}{\tau-1}2(\tau-1)}{>}\frac{\gamma^6\omega^2}{(6M)^2/\tilde{M}^4}.
\end{align*}
Since $\omega_{l}=\omega_{-l}$, the remainder of the lemma may be proved in the similar way as  above with $l\geq1,k\leq-1$, or $l\leq-1,k\geq1$, or $l,k\leq-1$. Thus this ends the proof of {\bf(F2)}.
\end{proof}

\section{Appendix}\label{sec:4}
Similar to the proof of Lemmata 2.1--2.3 in \cite{berti2008cantor}, we may get the following  Lemmata \ref{lem19}--\ref{lem21}.
\begin{lemm}[Moser-Nirenberg]\label{lem19}
$\forall u_1,u_2\in \mathcal{H}^{s'}\cap \mathcal{H}^s$ with $s'\geq0$ and $s>\frac{1}{2}$, one has
\begin{align}
\|u_1u_2\|_{{s'}}
&\leq C(s')\left(\|u_1\|_{L^{\infty}(\mathbb{T},H^2(0,\pi))}\|u_2\|_{{s'}}+\|u_1\|_{{s'}}\|u_2\|_{L^{\infty}(\mathbb{T},H^2(0,\pi))}\right)\label{R1}\\
&\leq  C(s')\left(\|u_1\|_{s}\|u_2\|_{{s'}}+\|u_1\|_{{s'}}\|u_2\|_{s}\right).\label{R2}
\end{align}
\end{lemm}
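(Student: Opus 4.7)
My plan is to adapt the scalar arguments of \cite{berti2008cantor} (Lemmata 2.1--2.3 there) to the Banach-algebra-valued setting. The single structural ingredient not present in the scalar Berti--Bolle framework is that the Sobolev space in the space variable must itself be closed under pointwise multiplication; this holds here because of the one-dimensional embedding $H^{2}(0,\pi)\hookrightarrow C^{1}[0,\pi]$, which yields
\begin{equation*}
\|f_1 f_2\|_{H^2(0,\pi)} \leq C_0 \|f_1\|_{H^2(0,\pi)} \|f_2\|_{H^2(0,\pi)}, \qquad \forall\, f_1, f_2 \in H^2(0,\pi).
\end{equation*}
Consequently $\mathcal{H}^{s'}$ is naturally identified with the vector-valued Sobolev space $H^{s'}(\mathbb{T}; H^2_p(0,\pi))$, and the pointwise product $u_1 u_2$ makes sense in it.

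To prove \eqref{R1} I use a paraproduct decomposition in the $t$-variable. Let $\{\Delta_j\}_{j\geq -1}$ be the standard Littlewood--Paley projectors on $\mathbb{T}$, $S_j := \sum_{k\leq j-1}\Delta_k$, and decompose
\begin{equation*}
u_1 u_2 = \sum_j (S_{j-3} u_1)(\Delta_j u_2) + \sum_j (\Delta_j u_1)(S_{j-3} u_2) + \sum_{|j-j'|\leq 2} (\Delta_j u_1)(\Delta_{j'} u_2).
\end{equation*}
The Banach-algebra bound applied pointwise in $t$ reduces each block to a scalar estimate: Bernstein's inequality combined with the uniform $L^\infty$-boundedness of the partial sums gives $\|(S_{j-3} u_1)(t,\cdot)(\Delta_j u_2)(t,\cdot)\|_{H^2} \leq C_0\|u_1(t,\cdot)\|_{H^2}\,\|(\Delta_j u_2)(t,\cdot)\|_{H^2}$, and after multiplying by $2^{js'}$ and summing using the almost-orthogonality of the blocks in $L^2_t$ with values in $H^2_p$, one obtains $\|T_{u_1} u_2\|_{s'} \leq C(s')\|u_1\|_{L^\infty(\mathbb{T};H^2)}\|u_2\|_{s'}$, together with the symmetric bound for the second paraproduct. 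The high-high remainder is controlled by the usual dyadic resummation, producing the remaining term in \eqref{R1}.

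Inequality \eqref{R2} is then an immediate consequence of \eqref{R1} and the Sobolev embedding \eqref{B2}: for $s>1/2$ one has $\|u_i\|_{L^\infty(\mathbb{T};H^2_p)} \leq C(s)\|u_i\|_s$, so the two $L^\infty$-norms on the right of \eqref{R1} may be replaced by $s$-norms. The only real technical point is verifying that the paraproduct calculus on $\mathbb{T}$ (Bernstein's inequality, almost-orthogonality, maximal bounds for $S_j$) extends verbatim to $H^2_p$-valued functions; this is automatic because $H^2_p(0,\pi)$ is a Hilbert space and a Banach algebra, so no analytic ingredient beyond the scalar framework of \cite{berti2008cantor} is needed.
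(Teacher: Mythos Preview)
Your argument is correct. The paper itself gives no proof of this lemma, merely remarking that it follows ``similar to the proof of Lemmata~2.1--2.3 in \cite{berti2008cantor}'', so there is little to compare against beyond the original Berti--Bolle argument.

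That said, your route is genuinely different from the one in \cite{berti2008cantor}. There the product estimate is obtained by elementary means: one writes the time-Fourier convolution $(u_1u_2)_k=\sum_l (u_1)_{k-l}(u_2)_l$, uses the $H^2(0,\pi)$ algebra property coefficientwise, and splits via $\langle k\rangle^{s'}\le C(s')(\langle k-l\rangle^{s'}+\langle l\rangle^{s'})$ together with Young's inequality for $\ell^1*\ell^2$; this yields \eqref{R2} directly (the $L^\infty$ version \eqref{R1} then comes, for integer $s'$, from Leibniz plus Gagliardo--Nirenberg interpolation, and for general $s'$ by complex interpolation). Your paraproduct approach has the advantage of treating all $s'\ge 0$ uniformly and of producing \eqref{R1} in one stroke, at the cost of importing the Littlewood--Paley machinery. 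You correctly isolate the only new ingredient relative to the scalar case---namely that $H^2(0,\pi)$ is a Banach algebra---and your observation that the $L^1$-boundedness of the Littlewood--Paley kernels makes the vector-valued extension automatic is the right justification. One minor point worth making explicit: the high--high remainder bound by ``usual dyadic resummation'' uses $s'>0$; at $s'=0$ the estimate is immediate from $\|u_1u_2\|_0\le C_0\|u_1\|_{L^\infty(\mathbb{T};H^2)}\|u_2\|_0$.
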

\begin{lemm}[Logarithmic convexity]\label{lem20}
Setting $0\leq\mathfrak{a}'\leq \mathfrak{a}\leq \mathfrak{b}\leq\mathfrak{b}'$ with $\mathfrak{a}+\mathfrak{b}=\mathfrak{a}'+\mathfrak{b}'$, one has
\begin{align*}
\|u_1\|_{\mathfrak{a}}\|u_2\|_{\mathfrak{b}}\leq\Theta\|u_1\|_{\mathfrak{a}'}\|u_2\|_{\mathfrak{b}'}
+(1-\Theta)\|u_2\|_{\mathfrak{a}'}\|u_1\|_{\mathfrak{b}'},\quad \forall u_1,u_2\in \mathcal{H}^{\mathfrak{b}'},
\end{align*}
where $\Theta=\frac{\mathfrak{b}'-\mathfrak{a}}{\mathfrak{b}'-\mathfrak{a}'}$. Particularly, this holds:
\begin{align}\label{R3}
\|u\|_{\mathfrak{a}}\|u\|_{\mathfrak{b}}\leq\|u\|_{\mathfrak{a}'}\|u\|_{\mathfrak{b}'}, \quad \forall u\in \mathcal{H}^{\mathfrak{b}'}.
\end{align}
\end{lemm}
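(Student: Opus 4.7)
Observe that the hypotheses $0\le\mathfrak{a}'\le\mathfrak{a}\le\mathfrak{b}\le\mathfrak{b}'$ with $\mathfrak{a}+\mathfrak{b}=\mathfrak{a}'+\mathfrak{b}'$ together with $\Theta=(\mathfrak{b}'-\mathfrak{a})/(\mathfrak{b}'-\mathfrak{a}')\in[0,1]$ force the convex-combination identities $\mathfrak{a}=\Theta\mathfrak{a}'+(1-\Theta)\mathfrak{b}'$ and $\mathfrak{b}=(1-\Theta)\mathfrak{a}'+\Theta\mathfrak{b}'$ (the second using $\mathfrak{a}+\mathfrak{b}=\mathfrak{a}'+\mathfrak{b}'$). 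This symmetric decomposition is the engine driving the estimate.

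The plan proceeds in three steps. First, I would prove the pointwise weight bound $1+l^{2\mathfrak{a}}\le(1+l^{2\mathfrak{a}'})^{\Theta}(1+l^{2\mathfrak{b}'})^{1-\Theta}$ for every $l\in\mathbb{Z}$, by checking that $g(s):=\log(1+l^{2s})$ is convex on $[0,\infty)$; this is a direct computation, since $g''(s)=4\, l^{2s}(\log|l|)^{2}/(1+l^{2s})^{2}\ge 0$ (trivial at $l=0,\pm 1$), and then $g(\mathfrak{a})\le\Theta g(\mathfrak{a}')+(1-\Theta)g(\mathfrak{b}')$ exponentiates to the claimed weight inequality. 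The analogous estimate with $\mathfrak{a}$ replaced by $\mathfrak{b}$ and the roles of $\Theta,1-\Theta$ swapped follows identically from the second convex-combination identity.

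Second, I would insert these weight estimates into $\|u_{1}\|_{\mathfrak{a}}^{2}=\sum_{l}\|u_{1,l}\|_{H^{2}}^{2}(1+l^{2\mathfrak{a}})$, factor $\|u_{1,l}\|_{H^{2}}^{2}=\|u_{1,l}\|_{H^{2}}^{2\Theta}\cdot\|u_{1,l}\|_{H^{2}}^{2(1-\Theta)}$, and apply H\"older's inequality with conjugate exponents $1/\Theta$ and $1/(1-\Theta)$ to obtain the two one-function interpolation inequalities
\begin{align*}
\|u_{1}\|_{\mathfrak{a}}\le\|u_{1}\|_{\mathfrak{a}'}^{\Theta}\|u_{1}\|_{\mathfrak{b}'}^{1-\Theta},\qquad
\|u_{2}\|_{\mathfrak{b}}\le\|u_{2}\|_{\mathfrak{a}'}^{1-\Theta}\|u_{2}\|_{\mathfrak{b}'}^{\Theta}.
\end{align*}

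Third, multiplying these two bounds and regrouping gives
$\|u_{1}\|_{\mathfrak{a}}\|u_{2}\|_{\mathfrak{b}}\le\bigl(\|u_{1}\|_{\mathfrak{a}'}\|u_{2}\|_{\mathfrak{b}'}\bigr)^{\Theta}\bigl(\|u_{1}\|_{\mathfrak{b}'}\|u_{2}\|_{\mathfrak{a}'}\bigr)^{1-\Theta}$, and Young's inequality $A^{\Theta}B^{1-\Theta}\le\Theta A+(1-\Theta)B$, applied with $A=\|u_{1}\|_{\mathfrak{a}'}\|u_{2}\|_{\mathfrak{b}'}$ and $B=\|u_{1}\|_{\mathfrak{b}'}\|u_{2}\|_{\mathfrak{a}'}$, delivers the stated inequality. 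The special case \eqref{R3} is then immediate: taking $u_{1}=u_{2}=u$ collapses both $A$ and $B$ to $\|u\|_{\mathfrak{a}'}\|u\|_{\mathfrak{b}'}$, and the convex combination $\Theta A+(1-\Theta)B$ becomes $\|u\|_{\mathfrak{a}'}\|u\|_{\mathfrak{b}'}$. No serious obstacle is anticipated; the only slightly delicate point is that the weight inequality must hold with constant exactly $1$ so that no numerical factor appears in \eqref{R3}, which is precisely why the convexity-of-$\log(1+l^{2s})$ argument is needed rather than the cruder splitting $1+l^{2\mathfrak{a}}\le 2\max(1,l^{2\mathfrak{a}})$.
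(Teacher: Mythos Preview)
Your argument is correct and is the standard interpolation proof: convexity of $s\mapsto\log(1+l^{2s})$ gives the pointwise weight inequality with constant $1$, H\"older yields the one-function interpolation bounds, and Young's inequality combines them. The paper does not supply its own proof but simply refers to Lemmata~2.1--2.3 of \cite{berti2008cantor}, where exactly this argument appears; your write-up matches that approach.
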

Denote by $\mathscr {C}_k$ the following space composed by the time-independent functions:
\begin{align*}
\mathscr {C}_k:=\left\{f\in C^1([0,\pi]\times\mathbb{R};\mathbb{R}):~u\mapsto f(\cdot,u)\text{ is in }C^{k}(\mathbb{R};H^{2}(0,\pi))\right\}.
\end{align*}
\begin{lemm}\label{lem21}
If $f\in\mathscr {C}_1$, then the composition operator $u(x)\mapsto f(x,u(x))$ belongs to $C(H^2(0,\pi);H^2(0,\pi))$ with
\begin{equation*}
\|f(x,u(x))\|_{H^2}\leq C\left(\max_{u\in[-\mathfrak{U},\mathfrak{U}]}\|f(\cdot,u)\|_{H^2}+\max_{u\in[-\mathfrak{U},\mathfrak{U}]}\|\partial_uf(\cdot,u)\|_{H^2}\|u\|_{H^2}\right),
\end{equation*}
where $\mathfrak{U}:=\|u\|_{L^{\infty}(0,\pi)}$. In particular, one has
\begin{equation*}
\|f(x,0)\|_{H^2}\leq C.
\end{equation*}
\end{lemm}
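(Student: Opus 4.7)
The plan is to estimate the three $L^2$-components of $\|g\|_{H^2}$ for $g(x):=f(x,u(x))$, using the Sobolev embedding $H^2(0,\pi)\hookrightarrow C^1[0,\pi]$ (so $u,u'\in C[0,\pi]$ with $\mathfrak{U}:=\|u\|_{L^\infty}\le C\|u\|_{H^2}$) together with the algebra property of $H^2(0,\pi)$.

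The zeroth- and first-order pieces are immediate. Continuity of $s\mapsto f(\cdot,s)$ into $H^2(0,\pi)\hookrightarrow L^\infty(0,\pi)$ gives $\|g\|_{L^2}\le\sqrt{\pi}\|g\|_{L^\infty}\le C\max_{|s|\le\mathfrak U}\|f(\cdot,s)\|_{H^2}$. The classical chain rule (valid since $f\in C^1$ and $u\in C^1$) yields
\[
g'(x)=f_x(x,u(x))+f_u(x,u(x))\,u'(x),
\]
and each summand is bounded in $L^2$ respectively by $C\max_{|s|\le\mathfrak U}\|f(\cdot,s)\|_{H^2}$ (noting $f_x(\cdot,s)\in H^1\hookrightarrow L^\infty$ and $f_x$ continuous on $[0,\pi]\times\mathbb{R}$ by $f\in C^1$) and by $\|f_u(\cdot,u(\cdot))\|_{L^\infty}\|u'\|_{L^2}\le C\max_{|s|\le\mathfrak U}\|\partial_uf(\cdot,s)\|_{H^2}\|u\|_{H^2}$.

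The decisive step is the $L^2$-bound on the weak second derivative $g''$; since the hypothesis $f\in\mathscr C_1$ does not supply $\partial_u^2 f$, the naive chain-rule formula $g''=f_{xx}+2f_{xu}u'+f_{uu}(u')^2+f_u u''$ cannot be invoked at face value. I would instead isolate a single $u$-factor via the fundamental theorem of calculus,
\[
f(x,u(x))=f(x,0)+u(x)\,M(x),\qquad M(x):=\int_0^1\partial_uf\bigl(x,\theta u(x)\bigr)\,d\theta,
\]
so that $\|f(\cdot,0)\|_{H^2}\le\max_{|s|\le\mathfrak U}\|f(\cdot,s)\|_{H^2}$ (which also yields the in-particular bound $\|f(\cdot,0)\|_{H^2}\le C$) and $\|uM\|_{H^2}\le C\|u\|_{H^2}\|M\|_{H^2}$ by the $H^2$-algebra property. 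The heart of the matter is the residual bound $\|M\|_{H^2}\le C\max_{|s|\le\mathfrak U}\|\partial_uf(\cdot,s)\|_{H^2}$, which I would establish by a mollification argument: approximate $f$ by $f_\varepsilon\in C^\infty([0,\pi]\times\mathbb{R})$ with $f_\varepsilon(\cdot,s)\to f(\cdot,s)$ and $\partial_uf_\varepsilon(\cdot,s)\to\partial_uf(\cdot,s)$ in $H^2(0,\pi)$ uniformly for $|s|\le\mathfrak U$; for each smooth $f_\varepsilon$ the corresponding $M_\varepsilon$ is $C^2$, and Minkowski's inequality in the $\theta$-integral combined with the pointwise chain-rule computation produces $\|M_\varepsilon\|_{H^2}\le C\max_{|s|\le\mathfrak U}\|\partial_uf_\varepsilon(\cdot,s)\|_{H^2}$ uniformly in $\varepsilon$, whence weak compactness in $H^2$ passes to the limit $\varepsilon\to 0$.

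Continuity of the map $u\mapsto f(\cdot,u(\cdot))$ from $H^2$ to $H^2$ then follows from the same Taylor trick applied to differences,
\[
f(x,u_n(x))-f(x,u(x))=(u_n-u)(x)\int_0^1\partial_uf\bigl(x,u(x)+\theta(u_n-u)(x)\bigr)\,d\theta,
\]
combined with the algebra inequality and a uniform-in-$n$ $H^2$-bound on the integrand (itself obtained by the same mollification scheme on a fixed neighborhood of $u$ in $H^2$). The principal obstacle throughout is the absence of $\partial_u^2 f$, circumvented by the mollification step.
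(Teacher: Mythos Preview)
The paper does not give a proof; it refers to Lemmata~2.1--2.3 of Berti--Bolle~\cite{berti2008cantor}, whose spatial space is $H^1$ rather than $H^2$. You have correctly isolated the only real obstacle---the absence of $\partial_u^2 f$ under $f\in\mathscr C_1$---but the mollification does not remove it. Differentiating $M_\varepsilon(x)=\int_0^1\partial_uf_\varepsilon(x,\theta u(x))\,d\theta$ twice in $x$ via the chain rule necessarily produces terms containing $\partial_u^2f_\varepsilon$ and $\partial_u^3f_\varepsilon$ (paired with $u'$ and $(u')^2$), and these are \emph{not} controlled by $\max_{|s|\le\mathfrak U}\|\partial_uf_\varepsilon(\cdot,s)\|_{H^2}$; the Minkowski step only reduces you to bounding $\|\partial_uf_\varepsilon(\cdot,\theta u(\cdot))\|_{H^2}$ for each $\theta$, which is the original composition problem with $\partial_uf_\varepsilon$ in place of $f_\varepsilon$. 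So the asserted uniform-in-$\varepsilon$ bound on $\|M_\varepsilon\|_{H^2}$ is not established.

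In fact the gap cannot be closed under the stated hypothesis. Take $f(x,u)=\phi(u)$ with $\phi'(s)=|s|^{1/3}$ near $0$ and $u(x)=x-\tfrac{\pi}{2}\in H^2(0,\pi)$. Then $s\mapsto f(\cdot,s)=\phi(s)\mathbf 1$ is $C^1$ from $\mathbb R$ into the constant functions in $H^2$, so $f\in\mathscr C_1$, and both maxima on the right-hand side of the claimed estimate are finite; yet $g(x)=\phi(x-\tfrac{\pi}{2})$ has $g''(x)\sim|x-\tfrac{\pi}{2}|^{-2/3}\notin L^2(0,\pi)$. The $H^1$ analogue in~\cite{berti2008cantor} needs only one $x$-derivative and hence only $\partial_u f$, which is why $\mathscr C_1$ suffices there; for the $H^2$ setting one must assume additional $u$-regularity (e.g.\ $f\in\mathscr C_2$, so that $\partial_u^2f(\cdot,s)\in H^2\hookrightarrow L^\infty$ and the chain-rule expression for $g''$ can be estimated term by term), which is harmless in all downstream uses since $k\ge3$ throughout.
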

With the help of Lemmata \ref{lem19}--\ref{lem21}, we can obtain the following lemma.
\begin{lemm}\label{lem22}
Let $f\in\mathcal{C}_{k}$ with $k\geq1$. Then, for all $s>\frac{1}{2},0\leq s'\leq k-1$,  the composition operator $u(t,x)\mapsto f(t,x,u(t,x))$ is in $C(\mathcal{H}^{s}\cap \mathcal{{H}}^{s'};\mathcal{{H}}^{s'})$ with
\begin{align}\label{R4}
\|f(t,x,u)\|_{{s'}}\leq C(s',\|u\|_{s})(1+\|u\|_{{s'}}).
\end{align}
\end{lemm}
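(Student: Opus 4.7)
The plan is to prove the tame estimate \eqref{R4} by treating $\mathcal{H}^{s'}$ as a Sobolev-type space in $t$ with values in $H^2(0,\pi)$, combining the pointwise $H^2$-composition bound of Lemma \ref{lem21} with the Moser--Nirenberg product inequality \eqref{R1}--\eqref{R2} and the logarithmic convexity \eqref{R3}. I would proceed in three stages: base case $s'=0$, integer $s'\geq 1$ via the chain rule, then general $s'$ by interpolation.

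For $s'=0$, Parseval in $t$ gives $\|f(t,x,u)\|_0^{2}\sim\int_{\mathbb T}\|f(t,\cdot,u(t,\cdot))\|_{H^2(0,\pi)}^{2}\,dt$. Applying Lemma \ref{lem21} pointwise in $t$, together with the embedding \eqref{B2} which yields $\|u(t,\cdot)\|_{H^2}\leq C(s)\|u\|_{s}$ and hence a uniform bound $\mathfrak{U}\leq C(s)\|u\|_s$ on $\|u(t,\cdot)\|_{L^\infty}$, one obtains $\|f(t,x,u)\|_0\leq C(\|u\|_s)$, which is stronger than \eqref{R4} at $s'=0$.

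For an integer $s'=m\geq 1$ with $m\leq k-1$, the norm $\|f(t,x,u)\|_m^2$ is equivalent to $\|f(t,x,u)\|_0^2+\|\partial_t^{m}f(t,x,u(t,x))\|_0^2$. The Faà di Bruno chain rule expresses $\partial_t^{m}\bigl(f(t,x,u(t,x))\bigr)$ as a finite linear combination of terms
\begin{equation*}
(\partial_t^{\alpha}\partial_u^{\beta}f)(t,x,u(t,x))\cdot\prod_{r=1}^{\beta}\partial_t^{i_r}u(t,x),\qquad \alpha+i_1+\cdots+i_\beta=m,\ \beta\leq m.
\end{equation*}
Because $f\in\mathcal C_k$ with $k\geq s'+1$, every factor $(\partial_t^{\alpha}\partial_u^{\beta}f)(t,x,u)$ is again a composition covered by the $s'=0$ estimate, so its $\|\cdot\|_0$-norm is bounded by $C(\|u\|_s)$. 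For the product, I would apply \eqref{R1}--\eqref{R2} in the time direction: at each step one factor absorbs the high $\|\cdot\|_{s'}$-norm while the remaining factors are controlled in $\|\cdot\|_{s}$ via \eqref{B2}. The logarithmic convexity \eqref{R3} allows me to redistribute intermediate orders $i_r$ so that at most one $\partial_t^{i_r}u$ carries the full $m$ derivatives, giving exactly the tame bound $C(s',\|u\|_s)(1+\|u\|_{s'})$.

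For non-integer $s'\in(m,m+1)$ with $m+1\leq k-1$, I would interpolate between the estimates at $s'=m$ and $s'=m+1$ using \eqref{R3} on the sequence of Fourier coefficients, which is the standard real-interpolation argument for weighted $\ell^2$ spaces in the time-frequency index. Finally, continuity of $u\mapsto f(\cdot,\cdot,u)$ follows by applying the same scheme to the difference $f(t,x,u)-f(t,x,\bar u)=(u-\bar u)\int_0^1\partial_u f(t,x,u+\theta(\bar u-u))\,d\theta$ together with dominated convergence. The main technical obstacle is the combinatorial bookkeeping in the Faà di Bruno expansion: one must carefully apply \eqref{R1}--\eqref{R3} to the multilinear product so that only one factor ever carries the top Sobolev index, which is precisely what makes the bound \emph{tame} rather than merely polynomial in $\|u\|_{s'}$; the loss of one derivative in the hypothesis $s'\leq k-1$ exactly compensates the single $\partial_u f$ that may appear with one $t$-derivative of $u$ in the leading term.
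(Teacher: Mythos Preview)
Your base case and your integer-$s'$ strategy are both sound, but the latter differs from the paper's route. You expand $\partial_t^{m}(f\circ u)$ fully via Fa\`a di Bruno and then control each multilinear term with \eqref{R1}--\eqref{R2} and \eqref{R3}; the paper instead runs a one-step induction on $q\in\mathbb{N}$, using only
\[
\|f(t,x,u)\|_{q+1}\le\|f(t,x,u)\|_{0}+\|\partial_{t}f(t,x,u)\|_{q}+\|\partial_{u}f(t,x,u)\,\partial_{t}u\|_{q},
\]
applying the inductive hypothesis to $\partial_t f,\partial_u f\in\mathcal C_{k-1}$, so that only a single binary product needs \eqref{R1} and a single application of \eqref{R3} (namely $\|u\|_{q}\|u\|_{s_1+1}\le\|u\|_{q+1}\|u\|_{s_1}$). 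Your Fa\`a di Bruno approach is the classical Moser-composition proof and is correct, but the paper's induction sidesteps precisely the multilinear bookkeeping you identify as ``the main technical obstacle''.

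Your non-integer step, however, has a real gap. Applying \eqref{R3} to $F(u)$ gives
\[
\|F(u)\|_{s'}\le\|F(u)\|_{m}^{1-\theta}\|F(u)\|_{m+1}^{\theta}\le C(\|u\|_{s})\,(1+\|u\|_{m})^{1-\theta}(1+\|u\|_{m+1})^{\theta},
\]
but since $m+1>s'$ the factor $\|u\|_{m+1}$ cannot be controlled by $\|u\|_{s'}$; for $u\in\mathcal H^{s'}\setminus\mathcal H^{m+1}$ the right-hand side is infinite. Tame bounds for nonlinear composition operators do not interpolate in this naive way. The paper does not attempt this: for non-integer $s'$ it invokes a Fourier dyadic (paraproduct) decomposition in the style of Delort, which localizes $u$ in time-frequency, applies the integer estimates blockwise, and resums. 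Your continuity argument via the mean-value integral is fine at integer orders, but inherits the same issue for non-integer $s'$.
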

\begin{proof}
For all $s'=q\in\mathbb{N}$ with $q\leq k-1$, we show that
\begin{align}\label{R5}
\|f(t,x,u)\|_{q}\leq C(q,\|u\|_{s})(1+\|u\|_{q}),\quad\forall u\in \mathcal{H}^{s}\cap \mathcal{H}^{q},
\end{align}
 and that
 \begin{align}\label{R6}
 f(t,x,u_n)\rightarrow f(t,x,u) \quad\text{as } u_n\rightarrow u ~\text{in}~ \mathcal{H}^{s}\cap \mathcal{H}^{q}.
 \end{align}
 Let us check  \eqref{R5}--\eqref{R6} by a recursive argument. For $q=0~(k=1)$, using \eqref{B2} yields
\begin{align}\label{R7}
\|f(t,x,u)\|_{0}&\leq C\max_{t\in\mathbb{T}}\|f(t,\cdot,u(t,\cdot))\|_{H^2(0,\pi)}
\stackrel{\text{Lemma }\ref{lem21}}{\leq}C'(1+\max_{t\in\mathbb{T}}\|u(t,\cdot)\|_{H^2(0,\pi)})\nonumber\\
&{\leq}C''(1+\|u\|_{s})=:C(\|u\|_{s}).
\end{align}
Moreover, a similar argument as \eqref{R7} can yield that for $k\geq2$,
\begin{align}\label{R8}
\|\partial_{t}f(t,x,u)\|_{0}\leq C(\|u\|_{s}),\quad\max_{t\in\mathbb{T}}\|\partial_{u}f(t,\cdot,u(t,\cdot))\|_{H^2(0,\pi)}\leq C(\|u\|_{s}).
\end{align}
And one has
\[\max_{t\in\mathbb{T}}\|u_{n}(t,\cdot)-u(t,\cdot)\|_{H^2(0,\pi)}\stackrel{\eqref{B2}}{\rightarrow}0 \quad \text { as}~ u_n\rightarrow u ~ \text { in }\mathcal{H}^{s}\cap \mathcal{H}^{0}.
\]
Hence, according to the continuity property in Lemma \ref{lem21} and the compactness of $\mathbb{T}$, we derive
\begin{align*}
\|f(t,x,u_{n})-f(t,x,u)\|_{0}\leq C\max_{t\in\mathbb{T}}\|f(t,\cdot,u_{n}(t,\cdot))-f(t,\cdot,u(t,\cdot))\|_{H^2(0,\pi)}\rightarrow0
\end{align*}
as $ u_n\rightarrow u$ in $ \mathcal{H}^{s}\cap \mathcal{H}^{0}$.

Suppose that \eqref{R5} holds for $q=\mathfrak{k}$ with $\mathfrak{k}\in \mathbb N^+$. Let us show that  it holds for $q=\mathfrak{k}+1$ with $\mathfrak{k}+1 \leq k-1$.

Since $\partial_{t}f,\partial_{u}f\in\mathcal{C}_{k-1}$, above assumption yields that for all $u\in \mathcal{H}^s\cap \mathcal{H}^{\mathfrak{k}+1}$,
\begin{align}\label{R9}
\|\partial_{t}f(t,x,u)\|_{\mathfrak{k}}\leq C(\mathfrak{k},\|u\|_{s})(1+\|u\|_{\mathfrak{k}}),\quad\|\partial_{u}f(t,x,u)\|_{\mathfrak{k}}\leq C(\mathfrak{k},\|u\|_{s})(1+\|u\|_{\mathfrak{k}}).
\end{align}
Setting $\mathfrak{h}(t,x):=f(t,x,u(t,x))$, we write $\mathfrak{h}$ as $\mathfrak{h}(t,x)=\sum_{l\in\mathbb{Z}}\mathfrak{h}_{l}(x)e^{{\rm i}lt}$. Clearly, one has $\mathfrak{h}_{t}(t,x)=\sum_{l\in\mathbb{Z}}{\rm i}l\mathfrak{q}_{l}(x)e^{{\rm i}lt}$. Thus, we have
\begin{align*}
\|\mathfrak{h}(t,x)\|^2_{{\mathfrak{k}+1}}&=\sum\limits_{l\in\mathbb{Z}}(1+l^{2(\mathfrak{k}+1)})\|\mathfrak{h}_{l}\|^2_{H^2}=
\sum_{l\in\mathbb{Z}}\|\mathfrak{h}_{l}\|^2_{H^2}+\sum_{l\in\mathbb{Z}}l^{2\mathfrak{k}}\|{\rm i}l\mathfrak{h}_l\|^2_{H^1}\\
&\leq\|\mathfrak{h}(t,x)\|^2_{0}+\|\mathfrak{h}_{t}(t,x)\|^2_{\mathfrak{k}}
\leq\left(\|\mathfrak{h}(t,x)\|_{0}+\|\mathfrak{h}_{t}(t,x)\|_{\mathfrak{k}}\right)^2.
\end{align*}
This gives rise to
\begin{align}\label{R10}
\|f(t,x,u)\|_{{\mathfrak{k}+1}}\leq \|f(t,x,u)\|_{0}+\|\partial_{t}f(t,x,u)\|_{\mathfrak{k}}+\|\partial_{u}f(t,x,u)\partial_{t}u\|_{\mathfrak{k}},
\end{align}
which leads to that for $q=1$ ($\mathfrak{k}=0$),
\begin{align*}
\|f(t,x,u)\|_{1}&\leq \|f(t,x,u)\|_{0}+\|\partial_{t}f(t,x,u)\|_{0}+C\max_{t\in\mathbb{T}}\|\partial_{u}f(t,\cdot,u(t,\cdot))\|_{H^2(0,\pi)}\|\partial_xu\|_{0}\\
&{\leq}~2C(\|u\|_{s})+C'(\|u\|_{s})\|u\|_{1}\leq C(1,\|u\|_{s})(1+\|u\|_{1})
\end{align*}
because of \eqref{R8}, where $C(1,\|u\|_{s}):=\max\{2C(\|u\|_{s}),C'(\|u\|_{s})\}$. Obviously, one has
\begin{align*}
\begin{cases}
s_1<\mathfrak{k}<s_1+1<\mathfrak{k}+1,\quad \mathfrak{k}=1,\\
s_1<s_1+1<\mathfrak{k}<\mathfrak{k}+1,\quad \forall \mathfrak{k}\geq2,
\end{cases}
\end{align*}
where $s_{1}\in(1/2,\min(1,s))$. Combining this with \eqref{R3}, we can obtain
\begin{align*}
\|u\|_{\mathfrak{k}}\|u\|_{{s_1+1}}{\leq}\|u\|_{{\mathfrak{k}+1}}\|u\|_{{s_1}}\leq\|u\|_{{\mathfrak{k}+1}}\|u\|_{s}.
\end{align*}
Thus it follows from \eqref{R7}--\eqref{R10}, \eqref{R1} that
\begin{align*}
\|f(t,x,u)\|_{{\mathfrak{k}+1}}
\leq & C(\|u\|_{s})+C(\mathfrak{k},\|u\|_{s})(1+\|u\|_{\mathfrak{k}})+C(\mathfrak{k})\|\partial_{u}f(t,x,u)\|_{\mathfrak{k}}\|\partial_{t}u\|_{L^{\infty}(\mathbb{T},H^2(0,\pi))}\\
&+C(\mathfrak{k})
\|\partial_{u}f(t,x,u)\|_{L^{\infty}(\mathbb{T},H^2(0,\pi))}\|u\|_{{\mathfrak{k}+1}}\\
\leq &C(\|u\|_{s})+C(\mathfrak{k},\|u\|_{s})(1+\|u\|_{\mathfrak{k}})+C(\mathfrak{k})C(\mathfrak{k},\|u\|_{s})(1+\|u\|_{\mathfrak{k}})\|u\|_{{s_1+1}}\\
&+C(\mathfrak{k})C(\|u\|_{s})\|u\|_{{\mathfrak{k}+1}}\\
\leq&C(\mathfrak{k}+1,\|u\|_{s})(1+\|u\|_{{\mathfrak{k}+1}}),
\end{align*}
where  $C(\mathfrak{k}+1,\|u\|_{s})=4\max{\left\{C(\|u\|_{s}),C(\mathfrak{k},\|u\|_{s}),C(\mathfrak{k})C(\mathfrak{k},\|u\|_{s})(1+\|u\|_{s}),C(\mathfrak{k})C(\|u\|_{s})\right\}}.$

Finally,  we  assume that \eqref{R6} holds for $q=\mathfrak{k}$. Using the inequality \eqref{R10} yields that \eqref{R6} also holds for  $p=\mathfrak{k}+1$ with $\mathfrak{k}+1\leq k-1$.

When  $s'$ is not an integer, we can obtain the result by the  Fourier dyadic decomposition. The argument is similar to the proof of the Lemma A.1 in \cite{Delort2011}.
\end{proof}
\begin{lemm}\label{lem23}
Let us define a map $F$ as
\begin{align*}
F:\quad \mathcal{H}^{s}\cap \mathcal{H}^{s'}&\rightarrow \mathcal{H}^{s'},\quad u\mapsto f(t,x,u).
\end{align*}
If $f\in\mathcal{C}_{k}$ with $k\geq3$, for all $0\leq s'\leq k-3$,  $F$ is  $C^2$ with respect to $u$ and
\begin{equation*}
{\rm D}_{u}F(u)[h]=\partial_{u}f(t,x,u)h, \quad {\rm D}^2_{u}G(u)[h,h]=\partial^2_{u}f(t,x,u)h^2,\quad \forall h\in \mathcal{H}^s\cap \mathcal{H}^{s'}
\end{equation*}
with
\begin{align}\label{R11}
&\|\partial_{u}f(t,x,u)\|_{{s'}}\leq C(s',\|u\|_{s})(1+\|u\|_{{s'}}),\quad\|\partial^2_{u}f(t,x,u)\|_{{s'}}\leq C(s',\|u\|_{s})(1+\|u\|_{{s'}}).
\end{align}
\end{lemm}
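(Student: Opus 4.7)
The plan is to derive the lemma by reducing everything to Lemma \ref{lem22} applied to the lower-order derivatives $\partial_uf$ and $\partial_u^2f$, and then to extract Fréchet differentiability from a fundamental-theorem-of-calculus argument combined with the Moser--Nirenberg product bound \eqref{R2}. Since $f\in\mathcal{C}_k$ with $k\geq3$, one has $\partial_uf\in\mathcal{C}_{k-1}$ and $\partial_u^2f\in\mathcal{C}_{k-2}$. Applying Lemma \ref{lem22} at level $k-1$ to $\partial_uf$ gives the first estimate of \eqref{R11} for $0\leq s'\leq k-2$, while applying it at level $k-2$ to $\partial_u^2f$ gives the second for $0\leq s'\leq k-3$; intersecting the ranges yields \eqref{R11} on the interval asserted in the statement, together with the continuity in $u$ of both Nemytskii operators $u\mapsto\partial_uf(t,x,u)$ and $u\mapsto\partial_u^2f(t,x,u)$ as maps $\mathcal{H}^s\cap\mathcal{H}^{s'}\to\mathcal{H}^{s'}$.

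Next I would establish Fréchet differentiability of $F$ with ${\rm D}_uF(u)[h]=\partial_uf(t,x,u)h$. Integrating the fundamental theorem of calculus pointwise in $(t,x)$ yields
\[
F(u+h)-F(u)-\partial_uf(t,x,u)h=\int_0^1\bigl(\partial_uf(t,x,u+\theta h)-\partial_uf(t,x,u)\bigr)h\,d\theta,
\]
and taking $\mathcal{H}^{s'}$-norms under the integral and applying \eqref{R2} bounds the remainder by
\[
C(s')\int_0^1\Bigl(\|\partial_uf(u+\theta h)-\partial_uf(u)\|_{s'}\|h\|_s+\|\partial_uf(u+\theta h)-\partial_uf(u)\|_s\|h\|_{s'}\Bigr)d\theta.
\]
The continuity of $u\mapsto\partial_uf(t,x,u)$ in both the $\|\cdot\|_s$- and $\|\cdot\|_{s'}$-norms (from Lemma \ref{lem22} applied to $\partial_uf$) makes both factors $o(1)$ as $h\to0$ in $\mathcal{H}^s\cap\mathcal{H}^{s'}$, giving an $o(\|h\|_s+\|h\|_{s'})$ remainder and hence Fréchet differentiability with the claimed derivative.

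To upgrade to $C^1$ I would show that the assignment $u\mapsto(h\mapsto\partial_uf(t,x,u)h)$ is continuous into $\mathcal{L}(\mathcal{H}^s\cap\mathcal{H}^{s'};\mathcal{H}^{s'})$: by \eqref{R2} its operator norm is controlled by $C(s')(\|\partial_uf(u)\|_s+\|\partial_uf(u)\|_{s'})$, and Lemma \ref{lem22} provides the desired continuity of each of these norms. Running the same scheme one level higher --- namely writing
\[
\partial_uf(t,x,u+h)-\partial_uf(t,x,u)-\partial_u^2f(t,x,u)h=\int_0^1\bigl(\partial_u^2f(u+\theta h)-\partial_u^2f(u)\bigr)h\,d\theta
\]
and applying \eqref{R2} plus continuity of $\partial_u^2f\in\mathcal{C}_{k-2}$ via Lemma \ref{lem22} (which is where the restriction $s'\leq k-3$ is forced) --- yields Fréchet differentiability of ${\rm D}_uF$ with the claimed second derivative, and the analogous operator-norm estimate promotes this to $C^2$.

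The main obstacle is purely bookkeeping: each differentiation consumes one order of regularity through $\partial_u$, and every time \eqref{R2} is invoked one must already have continuity of the relevant composition operator in both the anchor norm $\|\cdot\|_s$ and the high norm $\|\cdot\|_{s'}$. The range $0\leq s'\leq k-3$ is exactly what is needed to keep Lemma \ref{lem22} available for $\partial_u^2f\in\mathcal{C}_{k-2}$; once this indexing is set correctly, the calculations above are routine and no further estimate beyond \eqref{R2} and Lemma \ref{lem22} is required.
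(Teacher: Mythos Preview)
Your proposal is correct and follows essentially the same approach as the paper: reduce to Lemma~\ref{lem22} applied to $\partial_uf\in\mathcal{C}_{k-1}$ and $\partial_u^2f\in\mathcal{C}_{k-2}$ to obtain \eqref{R11} and continuity, then use the fundamental-theorem-of-calculus integral remainder together with the product estimate to extract Fr\'echet differentiability. The only cosmetic difference is that the paper bounds the remainder via the algebra property in the single norm $\|\cdot\|_{\max\{s,s'\}}$ rather than the two-term tame form of \eqref{R2}; your more explicit treatment of operator-norm continuity for the $C^1$ and $C^2$ upgrades is a welcome addition.
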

\begin{proof}
Since $\partial_{u}f,\partial^2_{u}f$ are in $\mathcal{C}_{k-1},\mathcal{C}_{k-2}$, Lemma \ref{lem22} shows that the maps $u\mapsto\partial_{u}f(t,x,u)$, $u\mapsto\partial^2_{u}f(t,x,u)$ are continuous and that
formula \eqref{R11} holds. We now verify that $F$  is $C^2$ respect to $u$. Applying the continuity property of $u\mapsto\partial_{u}f(t,x,u)$, we deduce
\begin{align*}
\|f(t,x,u+h)-f(t,x,u)-&\partial_{u}f(t,x,u)h\|_{{s'}}=\|h\int_0^1(\partial_{u}f(t,x,u+\mathfrak{v} h)-\partial_{u}f(t,x,u))\mathrm{d}\mathfrak{v}\|_{{s'}}\\
&\leq C(s')\|h\|_{{\max{\{s,s'\}}}}\max_{\mathfrak{v}\in[0,1]}\|\partial_{u}f(t,x,u+\mathfrak{v} h)-\partial_{u}f(t,x,u)\|_{{\max{\{s,s'\}}}}\\
&=o(\|h\|_{{\max{\{s,s'\}}}}),
\end{align*}
which leads to
\begin{equation*}
{\rm D}_{u}F(u)[h]=\partial_{u}f(t,x,u)h,\quad \forall h\in \mathcal{H}^s\cap \mathcal{H}^{s'}
\end{equation*}
with $u\mapsto {\rm D}_uF(u)$ being continuous. In addition,
\begin{align*}
&\partial_{u}f(t,x,u+\mathfrak{v}  h)h-\partial_{u}f(t,x,u)h-\partial^2_{u}f(t,x,u)h^2=h^2\int_0^1(\partial^2_{u}f(t,x,u+\mathfrak{v} h)-\partial^2_{u}f(t,x,u))~\mathrm{d}\mathfrak{v}.
\end{align*}
The same discussion as above yields that $F$ is twice differentiable with respect to $u$ and that $u\mapsto {\rm D}^2_uF(u)$ is continuous.
\end{proof}

\begin{proof}[\bf{Proof of formula \eqref{G13}}]
If $j>\max\{\mathcal{J}_0,{2\sqrt{M}}\}$, it follows from  formula \eqref{H22} that
\begin{align*}
\inf_{\stackrel{\epsilon\in(\epsilon_1,\epsilon_2)}{w\in\{W\cap \mathcal{H}^s:\|w\|_{s}< r\}}}\left|\mu_{j+1}(\epsilon,w)-\mu_j(\epsilon,w)\right|\geq&1
-\left|\mu_{j+1}(\epsilon,w)-((j+1)^2+\upsilon_0)\right|\\
&-\left|\mu_{j}(\epsilon,w)-(j^2+\upsilon_0)\right|\\
\geq&1-\frac{2M}{j^2}>\frac{1}{2}.
\end{align*}
If $0\leq j\leq\max\{\mathcal{J}_0,{2\sqrt{M}}\}$, it is clear that
\begin{align*}
\mathfrak{s}_j:=\inf_{\stackrel{\epsilon\in(\epsilon_1,\epsilon_2)}{w\in\{W\cap \mathcal{H}^s:\|w\|_{s}< r\}}}\left|\mu_{j+1}(\epsilon,w)-\mu_j(\epsilon,w)\right|.
\end{align*}
Thus we complete the proof.
\end{proof}

\subsection*{Acknowledgement}
The second author would like to thank Prof. Shuguan Ji for many helpful discussions.

\providecommand{\noopsort}[1]{}\providecommand{\singleletter}[1]{#1}%



\end{document}